\title{High Probability Bounds for a Class of Nonconvex Algorithms with AdaGrad Stepsize}
\author{Ali Kavis \\
 EPFL (LIONS)\\ 
 \texttt{ali.kavis}\\
 \texttt{@epfl.ch}
 \And
 Kfir Y. Levy \thanks{A Viterbi fellow} \\
 Technion\\
 \texttt{kfirylevy}\\
 \texttt{@technion.ac.il}
 \And
 Volkan Cevher \\
 EPFL (LIONS)\\ 
 \texttt{volkan.cevher}\\
 \texttt{@epfl.ch}
}
\algnewcommand\algorithmicinput{\textbf{Input:}}
\algnewcommand\Input{\item[\algorithmicinput]}
\newtheorem{theorem}{Theorem}[section]
\newtheorem{lemma}{Lemma}[section]
\newtheorem{proposition}{Proposition}[section]
\newcommand{\newreptheorem}[2]{\newtheorem*{rep@#1}{\rep@title}
	\newenvironment{rep#1}[1]{\def\rep@title{#2 \ref*{##1}}\begin{rep@#1}}{\end{rep@#1}}
}
\newcommand{\bg}{\bar{g}}
\newtheorem{remark}{Remark}
\newcommand{\ignore}[1]{}
\def\var{{\bf Var}}
\def\bold0{\mathbf{0}}
\newcommand\E{\mbox{\bf E}}
\def\bx{\mathbf{x}}
\newcommand{\tnabla}{\widetilde{\nabla}}
\newcommand{\teta}{\widetilde{\eta}}
\newcommand{\abs}[1]{\left| {#1} \right|} 
\newcommand{\Prob}[1]{\mathbb{P}\left( #1 \right)} 
\newcommand{\Ex}[1]{\mathbb E \left[\,#1\,\right]} 
\newcommand{\norm}[1]{\| {#1} \|} 
\newcommand{\ip}[2]{\langle{#1},{#2}\rangle} 
\newcommand{\bc}[1]{\left\{{#1}\right\}} 
\newcommand{\br}[1]{\left({#1}\right)} 
\newcommand{\bs}[1]{\left[{#1}\right]} 
\renewcommand{\bx}{\bar{x}}
\newcommand{\Dmax}{\Delta_\text{max}}
\begin{document}

\maketitle

\begin{abstract}
In this paper, we propose a new, simplified high probability analysis of AdaGrad for smooth, non-convex problems. 
More specifically, we focus on a particular accelerated gradient (AGD) template \citep{lan2020first}, through which we recover the original AdaGrad and its variant with averaging, and prove a convergence rate of $\mathcal O (1/ \sqrt{T})$ with high probability without the knowledge of smoothness and variance. 
We use a particular version of Freedman's concentration bound for martingale difference sequences \citep{kakade2008generalization} which enables us to achieve the best-known dependence of $\log (1 / \delta )$ on the probability margin $\delta$. 
We present our analysis in a modular way and obtain a complementary $\mathcal O (1 / T)$ convergence rate in the deterministic setting. 
To the best of our knowledge, this is the first high probability result for AdaGrad with a truly adaptive scheme, i.e., completely oblivious to the knowledge of smoothness and uniform variance bound, which simultaneously has best-known dependence of $\log( 1/ \delta)$. 
We further prove noise adaptation property of AdaGrad under additional noise assumptions.
\end{abstract}

\section{Introduction} \label{sec:introduction}
Adaptive gradient methods are a staple of machine learning (ML) in solving core problems such as 
\begin{align} \label{eq:problem-definition}
    \min_{x \in \mathbb R^d} f(x) := \mathbb E_{z \sim \mathcal D} \left[ f(x; z) \right], \tag{P}
\end{align}
where the objective $f(x)$ is possibly non-convex, and $\mathcal D$ is a probability distribution from which the random vector $z$ is drawn.
Problem~\eqref{eq:problem-definition} captures for instance empirical risk minimization or finite-sum minimization \citep{shalev-shwartz2014machine} problems, where $z$ represents the mini-batches and $\mathcal D$ corresponds to the distribution governing the data or its sampling strategy. 


Within the context of large-scale problems, including streaming data, computing full gradients is extremely costly, if not impossible. Hence, stochastic iterative methods are the main optimizer choice in these scenarios. The so-called adaptive methods such as AdaGrad~\citep{duchi2011adaptive}, Adam~\citep{kingma2014adam} and AmsGrad~\citep{reddi2018convergence} have witnessed a surge of interest both theoretically and practically due to their off-the-shelf performance. For instance, adaptive optimization methods are known to show superior performance in various learning tasks such as machine translation \citep{zhang2020adaptive, vaswani2017attention}.

From a theoretical point of view, existing literature provides a quite comprehensive understanding regarding the \emph{expected} behaviour of existing adaptive learning methods. Nevertheless, these results do not capture the behaviour of adaptive methods within a single/few runs, which is related to the \emph{probabilistic} nature of these methods.
While there exists \emph{high probability} analysis of vanilla SGD for non-convex problems~\citep{ghadimi2013stochastic}, adaptive methods have received limited attention in this context.

Our main goal in this paper is to understand the probabilistic convergence properties of adaptive algorithms, specifically AdaGrad, while focusing on their \emph{problem parameter} adaptation capabilities in the non-convex setting. In this manuscript, adaptivity refers to the ability of an algorithm to ensure convergence without requiring the knowledge of quantities such as smoothness modulus or variance of noise. Studies along this direction largely exist for the convex objectives; for instance, \citet{levy2018online} shows that AdaGrad can (implicitly) exploit smoothness and adapt to the magnitude of noise in the gradients when $f(x)$ is convex in  \eqref{eq:problem-definition}. 

This alternative perspective to {adaptivity} is crucial because most existing analysis, both for classical and adaptive methods, assume to have access to smoothness constant, bound on gradients~\citep{reddi2018convergence} and even noise variance~\citep{ghadimi2013stochastic}. In practice, it is difficult, if not impossible, to compute or even estimate such quantities. For this purpose, in the setting of \eqref{eq:problem-definition} we study a class of adaptive gradient methods that enable us to handle noisy gradient feedback without requiring the knowledge of the objective's smoothness modulus, noise variance or a bound on gradient norms. 

We summarize our contributions as follows:
\begin{enumerate}
    \item We provide a modular, simple high probability analysis for AdaGrad-type adaptive methods.
    \item We present the first \emph{optimal high probability convergence result} of the \emph{original AdaGrad} algorithm for non-convex smooth problems. Concretely, 
    \begin{enumerate}
        \item we analyze a fully adaptive step-size, oblivious to Lipschitz constant and noise variance,
        \item we obtain the best known dependence of $\log (1/\delta)$ on the probability margin $\delta$.
        \item we show that under sub-Gaussian noise model, AdaGrad adapts to noise level with high probability, i.e, as variance $\sigma \to 0$, convergence rate improves, $1/\sqrt{T} \to 1/T$.
    \end{enumerate}
    \item We present new extensions of AdaGrad that include averaging and momentum primitives, and prove similar high probability bounds for these methods, as well.
    Concretely, we study a general adaptive template which individually recovers AdaGrad, AdaGrad with averaging and adaptive RSAG \citep{ghadimi2016accelerated} for different parameter choices.
\end{enumerate}

In the next section, we will provide a broad overview of related work with an emphasis on the recent developments. Section~\ref{sec:prelim} formalizes the problem setting and states our blanket assumptions. Section~\ref{sec:adagrad} introduces the building blocks of our proposed proof technique while proving convergence results for AdaGrad. We generalize the convergence results of AdaGrad for a class of nonconvex, adaptive algorithms in Section~\ref{sec:generalization}. Finally, we present concluding remarks in the last section.

\section{Related Work} \label{sec:related-work}

\paragraph{Adaptive methods for stochastic optimization}
As an extended version of the online (projected) GD \citep{zinkevich2003online}, AdaGrad~\citep{duchi2011adaptive} is the pioneering work behind most of the contemporary adaptive optimization algorithms Adam, AmsGrad and RmsProp~\citep{tieleman2012lecture} to name a few. Simply put, such AdaGrad-type methods compute step-sizes on-the-fly by accumulating gradient information and achieve adaptive regret bounds as a function of gradient history (see also \citep{tran2019convergence, alacaoglu2020new, luo2019adaptive, huang2019nostalgic}). 

\paragraph{Universality, adaptive methods and acceleration}
We call an algorithm \emph{universal} if it achieves optimal rates under different settings, without any modifications. For \emph{convex} minimization problems, \citet{levy2018online} showed that AdaGrad attains a rate of $\mathcal O (1/T + \sigma/\sqrt{T})$ by implicitly adapting to smoothness and noise levels; here $T$ is the number of oracle queries and $\sigma$ is the noise variance. They also proposed an accelerated AdaGrad variant with scalar step-size. The latter result was extended for compactly constrained problems via accelerated Mirror-Prox algorithm \citep{kavis2019universal}, and for composite objectives \citep{joulani2020simpler}.
Recently, \citet{ene2021adaptive} have further generalized the latter results by designing a novel adaptive, accelerated algorithm with per-coordinate step-sizes.  Convergence properties of such algorithms under smooth, non-convex losses are unknown to date.

\paragraph{Adaptive methods for nonconvex optimization}
Following the popularity of neural networks, adaptive methods have attracted massive attention due to their favorable performance in training and their ease of tuning. The literature is quite vast, which is impossible to cover exhaustively here. Within the representative subset \citep{chen2018convergence, zaheer2018adaptive, li2019convergence, zou2019sufficient, defossez2020convergence, alacaoglu2020convergence, chen2020closing, levy2021storm}. The majority of the existing results on adaptive methods for nonconvex problems focus on \emph{in expectation} performance. 

\paragraph{High probability results}

\citet{ghadimi2013stochastic} are the first to analyze  SGD in the non-convex regime and provide tight convergence bounds. Nevertheless, their method requires a prior knowledge of the smoothness modulus and noise variance. 
In the context of adaptive methods,~\citet{li2020high} considers delayed AdaGrad (with lag-one-behind step-size) for smooth, non-convex losses under subgaussian noise and proved $ O (\sigma \sqrt{ \log(T/\delta) } / \sqrt{T})$ rate. Under similar conditions,~\citet{zhou2018convergence} proves convergence of order $O ((\sigma^2 \log(1 / \delta)) / T + 1 / \sqrt{T})$ for AdaGrad. However, both works require the knowledge of smoothness to set the step-size. Moreover, \citet{ward2019adagrad} guarantees that AdaGrad with scalar step-size convergences at $ O ( (1/\delta) \log(T) / \sqrt{T})$ rate with high probability. Although their framework is oblivious to smoothness constant, their dependence of probability margin is far from optimal. More recently, under heavy-tailed noise having bounded p$^\text{th}$ moment for $p \in (1, 2), $~\citet{cutkosky2021highprobability} proves a rate of $O (\log(T/\delta) / T^{(p-1)/(3p-2)})$ for clipped normalized SGD with momentum; nevertheless their method requires the knowledge of (a bound on) the behavior of the heavy tails. 

\section{Setup and preliminaries} \label{sec:prelim}

As we stated in the introduction, we consider the unconstrained minimization setting
\begin{align*}
    \min_{x \in \mathbb R^d} f(x) := \mathbb E_{z \sim \mathcal D} \left[ f(x; z) \right],
\end{align*}
where the differentiable function $f : \mathbb R^d \to \mathbb R$ is a smooth and (possibly) nonconvex function.

We are interested in finding a first-order $\epsilon$-stationary point satisfying $\norm{\nabla f(x_t)}^2 \leq \epsilon$, where $\norm{\cdot}$ denotes the Euclidean norm for the sake of simplicity.
As the standard measure of convergence, we will quantify the performance of algorithms with respect to \emph{average} gradient norm,  $\frac{1}{T} \sum_{t=1}^{T} \norm{\nabla f(x_t)}^2$. It immediately implies convergence in \emph{minimum} gradient norm, $\min_{t \in [T]} \norm{\nabla f(x_t)}^2$. Moreover, note that if we are able to bound $\frac{1}{T} \sum_{t=1}^{T} \norm{\nabla f(x_t)}^2$, then by choosing to output a solution $\bx_T$ which is chosen uniformly at random from the set of query points 
$\{x_1,\ldots,x_T\}$, then we ensure that $\E \|\nabla f(\bx_T) \|^2 :=\frac{1}{T} \sum_{t=1}^{T} \norm{\nabla f(x_t)}^2 $ is bounded.

A function is called $G$-Lipschitz continuous if it satisfies
\begin{align} \label{eq:G-Lipschitz}
    \abs{ f(x) - f(y) } \leq G \norm{x - y},~~~\forall x,y \in \text{dom}(f),
\end{align}
which immediately implies that
\begin{align} \label{eq:bounded-gradient}
    \norm{\nabla f(x)} \leq G,~~~\forall x \in \text{dom}(f).
\end{align}
A differentiable function is called $L$-smooth if it has $L$-Lipschitz gradient
\begin{align} \label{eq:L-smooth}
    \norm{\nabla f(x) - \nabla f(y)} \leq L \norm{x - y},~~~\forall x,y \in \text{dom}(\nabla f).
\end{align}
An equivalent characterization is referred to as the ``descent lemma''~\citep{ward2019adagrad, beck2017first},
\begin{align} \label{eq:descent-lemma}
    \abs{ f(x) - f(y) - \ip{\nabla f(y)}{x-y} } \leq (L / 2) \norm{x - y}^2.
\end{align}
\paragraph{Assumptions on oracle model:} We denote stochastic gradients with $\tnabla f(x) = \nabla f(x; z)$, for some random vector drawn from distribution $\mathcal D$. Since our template embraces single-call algorithms, we use this shorthand notation for simplicity.
An oracle is called (conditionally) unbiased if
\begin{align} \label{eq:unbiasedness}
    \mathbb E \big[ \tnabla f(x) \vert x \big] = \nabla f(x),~~~\forall x \in \text{dom}(\nabla f).
\end{align}
Gradient estimates generated by a first-order oracle are said to have bounded variance if they satisfy
\begin{align} \label{eq:bounded-variance}
    \mathbb E \big[ \norm{\tnabla f(x) - \nabla f(x) }^2 \vert x \big] \leq \sigma^2,~~~\forall  \in \text{dom}(\nabla f).
\end{align}
Finally, we assume that the stochastic gradient are bounded almost surely, i.e.,
\begin{align} \label{eq:as-bounded}
    \norm{\tnabla f(x)} \leq \tilde G,~~~\forall x \in \text{dom}(\nabla f).
\end{align}
\begin{remark}
    Bounded variance assumption \eqref{eq:bounded-variance} is standard in the analysis of stochastic methods~\citep{lan2020first}. Similarly, for the analysis of adaptive methods in the nonconvex realm, it is very common to assume bounded stochastic gradients (see \citep{zaheer2018adaptive, zhou2018convergence, chen2018convergence, li2020high} and references therein).
\end{remark}

\section{Proposed analysis and Adagrad} \label{sec:adagrad}

This section introduces our proposed proof technique as well as our main theoretical results for AdaGrad with proof sketches and discussions on the key elements of our theoretical findings.
We will present a high-level overview of our simplified, modular proof strategy while proving a complementary convergence result for AdaGrad under deterministic oracles. 
In the sequel, we refer to the name AdaGrad as the scalar step-size version (also known as AdaGrad-Norm) as presented in Algorithm~\ref{alg:adagrad}.
\begin{algorithm}[H] 
\caption{AdaGrad} \label{alg:adagrad}
\begin{algorithmic}[1]
\Input{ time horizon $T$ , $x_1 \in \mathbb R^d$, step-size $\bc{\eta_t}_{t \in [T]}$}, $G_0 > 0$
\For{$t = 1, ..., T$}
	\Indent
	    \State Generate $g_t = \tnabla f(x_t)$
	    \State $\eta_t = \frac{1}{\sqrt{ G_0^2 + \sum_{k=1}^{t} \norm{ g_k }^2}}$
		\State $x_{t+1} = x_t - \eta_t g_t$
	\EndIndent
\EndFor
\end{algorithmic}
\end{algorithm}
Before moving forward with analysis, let us first establish the notation we will use to simplify the presentation.
In the sequel, we use $[T]$ as a shorthand expression for the set $\{ 1, 2, ..., T \}$.
We will use $\Delta_t = f(x_t) - \min_{x \in \mathbb R^d} f(x)$ as a concise notation for objective suboptimality and $\Dmax = \max_{t \in [T+1]} \Delta_t$ will denote the maximum over $\Delta_t$. In the rest of Section~\ref{sec:adagrad}, we denote stochastic gradient of $f$ at $x_t$ by $g_t = \tnabla f(x_t) = \nabla f(x_t; z_t)$ and its deterministic equivalent as $\bg_t:=\nabla f(x_t)$. We also use the following notation for the noise vector  $\xi_t: = g_t - \bg_t$.

Notice that AdaGrad (Alg.~\ref{alg:adagrad}) does not require any prior knowledge regarding the smoothness modulus nor the noise variance. The main results in this section is  Theorem~\ref{thm:high-prob-adagrad}, where we show that with high probability AdaGrad obtains an optimal rate $\tilde{O}(\log(1/\delta)/\sqrt{T})$ for finding an approximate stationary point. Moreover, Theorem~\ref{thm:adagrad-deterministic} shows that in the deterministic case AdaGrad achieves an optimal rate of $O(1/T)$, thus establishing its universality.


\subsection{Technical Lemmas} \label{sec:technical-lemmas}
We make use of a few technical lemmas while proving our main results, which we refer to in our proof sketches. We present them all at once before the main theorems for completeness.
First, Lemmas~\ref{lem:technical-sqrt} and~\ref{lem:technical-log} are well-known results from online learning, essential for handling adaptive stepsizes.
\begin{lemma} \label{lem:technical-sqrt}
    Let $a_1, ..., a_n$ be a sequence of non-negative real numbers. Then, it holds that 
    \begin{align*}
        \sqrt{ \sum_{i=1}^{n} a_i } \leq \sum_{i=1}^{n} \frac{a_i}{ \sqrt{ \sum_{k=1}^{i} a_k } } \leq 2 \sqrt{ \sum_{i=1}^{n} a_i }
    \end{align*}
\end{lemma}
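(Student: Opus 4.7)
The plan is to prove the two inequalities separately, using the shorthand $S_i := \sum_{k=1}^i a_k$ for the running sum, with the convention $S_0 = 0$. Since the expression $a_i/\sqrt{S_i}$ only appears when $a_i > 0$ (and hence $S_i > 0$), I would formally adopt $0/0 = 0$ so that any zero terms contribute nothing to either side; after that, both inequalities are nontrivial only on indices where $S_i > 0$, and I may assume without loss of generality that $S_n > 0$ (else everything is trivially $0$).

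For the left inequality, I would just bound each denominator by the largest one, $\sqrt{S_i} \leq \sqrt{S_n}$, which immediately gives
$$\sum_{i=1}^n \frac{a_i}{\sqrt{S_i}} \;\geq\; \sum_{i=1}^n \frac{a_i}{\sqrt{S_n}} \;=\; \frac{S_n}{\sqrt{S_n}} \;=\; \sqrt{S_n}.$$
This step is essentially free and does not rely on anything beyond monotonicity of the running sums.

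For the right inequality, the key idea is a telescoping argument. Writing
$$\sqrt{S_n} \;=\; \sum_{i=1}^n \Bigl( \sqrt{S_i} - \sqrt{S_{i-1}} \Bigr),$$
I would then rationalize each summand, $\sqrt{S_i} - \sqrt{S_{i-1}} = a_i / (\sqrt{S_i}+\sqrt{S_{i-1}})$, and use $\sqrt{S_{i-1}} \leq \sqrt{S_i}$ to lower-bound this difference by $a_i / (2\sqrt{S_i})$. Summing over $i$ and multiplying by $2$ yields
$$2\sqrt{S_n} \;\geq\; \sum_{i=1}^n \frac{a_i}{\sqrt{S_i}},$$
which is the desired upper bound.

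There is no real obstacle here: the only delicate point is the degenerate case in which some $a_i = 0$, which is handled by the $0/0 = 0$ convention described above (equivalently, one may restrict the sums to the indices where $S_i > 0$ and observe that the two sides are unchanged). The entire argument is a combination of the monotonicity $S_{i-1} \leq S_i$ and a single rationalization identity, so the proof is short and self-contained.
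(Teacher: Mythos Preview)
Your proposal is correct. The paper itself does not give a proof but simply defers to Appendix~A.4 of \citet{levy2018online} for the lower bound and to Lemma~5 in Appendix~B of \citet{mcmahan2010adaptive} for the upper bound; the arguments in those references are exactly the monotonicity and telescoping/rationalization steps you wrote out, so your self-contained version matches the cited approach.
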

\begin{lemma} \label{lem:technical-log}
    Let $a_1, ..., a_n$ be a sequence of non-negative real numbers. Then, it holds that 
    \begin{align*}
        \sum_{i=1}^{n} \frac{a_i}{ \sum_{k=1}^{i} a_i } \leq 1 + \log \br{ 1 + \sum_{i=1}^{n} a_i }
    \end{align*}
\end{lemma}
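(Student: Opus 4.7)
The plan is to use the elementary concavity inequality $1 - x \le -\log x$ (valid for $x \in (0,1]$) to turn the sum of ratios into a telescoping sum of logarithms. This is the standard bookkeeping move behind adaptive regret bounds in online learning, so I expect no surprises along the way. Set $S_i := \sum_{k=1}^{i} a_k$ and, without loss of generality, drop any indices with $a_i = 0$ (they contribute nothing to the left-hand side in the natural $0/0 := 0$ convention), so that $0 < S_1 \le S_2 \le \cdots \le S_n$.

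First I would peel off the leading term: $a_1/S_1 = 1$, which is exactly the constant $1$ on the right-hand side of the claim. For $i \ge 2$, I would apply the concavity inequality with $x = S_{i-1}/S_i \in (0,1]$:
\[
\frac{a_i}{S_i} \;=\; 1 - \frac{S_{i-1}}{S_i} \;\le\; -\log\!\br{\tfrac{S_{i-1}}{S_i}} \;=\; \log S_i - \log S_{i-1}.
\]
Summing from $i=2$ to $n$ telescopes cleanly to $\log S_n - \log S_1 = \log(S_n/a_1)$, so combining with the $i=1$ contribution yields $\sum_{i=1}^{n} a_i/S_i \;\le\; 1 + \log(S_n/a_1)$.

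The main obstacle, and essentially the only subtlety, is matching the stated right-hand side $1 + \log(1 + S_n)$ rather than $1 + \log(S_n/a_1)$. The natural way to reconcile this is to read the denominator as the unit-regularized partial sum $1 + \sum_{k=1}^{i} a_k$, which is the convention that matches the $G_0 > 0$ regularizer in Algorithm~\ref{alg:adagrad}. Under that reading, I would define $b_i := 1 + S_i$ with $b_0 := 1$ and apply the identical telescoping argument, noting that $a_i/b_i = 1 - b_{i-1}/b_i \le \log b_i - \log b_{i-1}$; summing from $i=1$ to $n$ directly gives $\sum_{i=1}^{n} a_i/b_i \le \log(1 + S_n)$, with the leading $+1$ in the claim being harmless slack. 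Once this bookkeeping convention is fixed, the entire proof reduces to one concavity estimate plus a telescoping sum, with no further technical difficulty.
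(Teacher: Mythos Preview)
Your telescoping via $1-x \le -\log x$ is essentially the paper's inductive argument unrolled: the paper uses the equivalent inequality $x/(1+x) \le \log(1+x)$ with $x = a_n/S_{n-1}$ at the inductive step, which is precisely your per-term estimate. Both routes arrive at $\sum_{i=1}^n a_i/S_i \le 1 + \log(S_n/a_1)$.

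You are right that this does not match the stated right-hand side $1+\log(1+S_n)$, and your instinct that the denominator should carry a regularizer is the correct reading. The lemma as printed is in fact false without it: take $a_1=\varepsilon$ small and $a_2=1$, so the left side is $1 + 1/(1+\varepsilon) \approx 2$ while the right side is $1+\log(2+\varepsilon) < 1.7$. The paper's own inductive step has the same defect: it needs $a_n/S_n \le \log\bigl(1 + a_n/(1+S_{n-1})\bigr)$, but the argument given (via $x/(1+x)\le\log(1+x)$) only establishes the weaker $a_n/S_n \le \log(1 + a_n/S_{n-1})$, and the displayed chain of inequalities runs in the wrong direction for an implication. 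In every application in the paper the partial sums carry a positive offset $G_0^2$, so your regularized version $\sum_i a_i/(c+S_i) \le \log(1+S_n/c)$ is both the intended statement and exactly what your telescoping proves.
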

The next lemma is the key for achieving the high probability bounds.
\begin{lemma}[Lemma 3 in \cite{kakade2008generalization}] \label{lem:freedman}
	Let $X_t$ be a martingale difference sequence such that $\abs{X_t} \leq b$. Let us also define
	\begin{align*}
		\var_{t-1}(X_t) = \var \left( X_t \mid X_1, ..., X_{t-1} \right) = \mathbb E \left[ X_t^2 \mid X_1, ... , X_{t-1} \right],
	\end{align*}
	and $V_T = \sum_{t=1}^{T} \var_{t-1} (X_t)$ as the sum of variances. For $\delta < 1/e$ and $T \geq 3$, it holds that
	\begin{align}
		\Prob{ \sum_{t=1}^{T} X_t > \max \left\{ 2 \sqrt{V_T}, 3 b \sqrt{ \log(1/\delta) } \right\} \sqrt{ \log(1/\delta) } } \leq 4\log(T) \delta
	\end{align}
\end{lemma}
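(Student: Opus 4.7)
The plan is to derive the stated Bernstein/Freedman-type tail bound from the classical Freedman martingale inequality via a peeling (stratification) argument over the random quadratic variation $V_T$. The classical Freedman inequality states that for any fixed $\sigma^2 > 0$ and $\tau > 0$,
\[
\Prob{ \sum_{t=1}^{T} X_t \geq \sqrt{2\sigma^2 \tau} + \tfrac{b\tau}{3} \text{ and } V_T \leq \sigma^2 } \leq e^{-\tau}.
\]
The difficulty in applying this directly is that $V_T$ is itself a random quantity, so one cannot simply substitute $\sigma^2 = V_T$ into the deterministic tail. The peeling trick sidesteps this by applying Freedman on a dyadic grid of candidate values for $V_T$ and then taking a union bound.

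Concretely, I would set $\tau = \log(1/\delta)$ and define the grid $\sigma_j^2 = b^2 \cdot 2^j$ for $j = 0, 1, \ldots, J$ with $J = \lceil \log_2 T \rceil$; since $|X_t| \leq b$ we have $V_T \leq T b^2 \leq \sigma_J^2$, so the grid exhausts the effective range of $V_T$. On the slice $\{V_T \in (\sigma_{j-1}^2, \sigma_j^2]\}$, and separately on the base slice $\{V_T \leq \sigma_0^2 = b^2\}$, Freedman's inequality with $\sigma^2 = \sigma_j^2$ gives
\[
\Prob{ \sum_{t=1}^{T} X_t \geq \sqrt{2\sigma_j^2 \tau} + \tfrac{b\tau}{3}, \; V_T \leq \sigma_j^2 } \leq \delta.
\]
On the $j$-th slice one has $\sigma_j^2 \leq 2 V_T$, so the variance term is bounded by $2\sqrt{V_T \tau}$; summing probabilities across the $J+1 = O(\log T)$ slices via a union bound yields total failure probability at most $(J+1)\delta$, matching the stated $4\log(T)\delta$ up to absolute constants.

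The form $\max\{2\sqrt{V_T},\, 3b\sqrt{\tau}\}\sqrt{\tau}$ arises from collapsing the two additive terms $\sqrt{2\sigma_j^2 \tau}$ and $b\tau/3$ into a maximum: whenever $V_T$ dominates $b^2 \tau$, the variance term $\sqrt{2\sigma_j^2 \tau} \leq 2\sqrt{V_T \tau}$ is the leading contribution, while in the opposite regime, including the base slice where Freedman's variance bound is essentially vacuous, the bias term $b\tau$ dominates and yields $3b\tau$ after absorbing constants. The main obstacle I anticipate is careful bookkeeping of the numerical constants (the $2$ in front of $\sqrt{V_T}$, the $3$ in front of $b\sqrt{\tau}$, and the $4$ multiplying $\log(T)\delta$), which requires a coordinated choice of the geometric ratio of the grid, a mild inflation of $\sigma_j$ to subsume the $b\tau/3$ term into a common factor, and explicit handling of the boundary slice $V_T \leq b^2$ where Freedman's inequality essentially reduces to an Azuma--Hoeffding estimate.
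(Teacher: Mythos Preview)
The paper does not actually prove this lemma: its entire ``proof'' is a pointer to the Appendix of \cite{kakade2008generalization}, so there is no in-paper argument to compare against. Your proposal---classical Freedman plus a dyadic peeling/union bound over the range of $V_T$---is exactly the technique used in that cited reference (originating with Bartlett et al.), so in substance you are reproducing the intended proof rather than offering an alternative route; the constant bookkeeping you flag as the main obstacle is indeed the only delicate part, and your outline handles the two regimes (variance-dominated vs.\ bias-dominated, including the base slice $V_T \le b^2$) correctly.
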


\subsection{Overview of proposed analysis}
We will start by presenting the individual steps of our proof and provide insight into its advantages.
In the rest of this section, we solely focus on AdaGrad, however, the same intuition applies to the more general Algorithm~\ref{alg:template} as we will make clear in the sequel.
Using the shorthand notations of $\bar g_t = \nabla f(x_t)$ and $g_t = \nabla f(x_t;z_t) = \bar g_t + \xi_t$, the classical analysis begins with,
\begin{align*}
	f(x_{t+1}) - f(x_t) 
	&\leq -\eta_t\|\bar g_t\|^2 - \eta_t\ip{\bar g_t}{\xi_t} + \frac{L\eta_t^2}{2}\|g_t\|^2.
\end{align*}
which is due to the smoothness property in Eq.~\eqref{eq:descent-lemma}.
Re-arranging and summing over $t \in [T]$ yields,
\begin{align*}
	\sum_{t=1}^T \eta_t \|\bar g_t\|^2
     & \leq  
       f(x_1) - f(x^*) + \sum_{t=1}^T - \eta_t\ip{\bar g_t}{\xi_t} + \frac{L}{2}\sum_{t=1}^T \eta_t^2 \|g_t\|^2.
\end{align*}
The main issue in this expression is the $\eta_t \ip{\bar g_t}{\xi_t}$ term, which creates measurability problems due to the fact hat $\eta_t$ and $\xi_t$ are dependent random variables.
On the left hand side, the mismatch between $\eta_t$ and  $\| \bar g_t \|^2$ prohibits the use of technicals lemmas as we accumulate \emph{stochastic} gradients for $\eta_t$.
Moreover, we cannot make use of Holder-type inequalities as we deal with high probability results.
Instead, we divide both sides by $\eta_t$, then sum over $t$ and re-arrange to obtain a bound of the form,
\begin{align}
	\sum_{t=1}^{T} \norm{\bar g_t}^2 
	&\leq \frac{\Delta_1}{\eta_1} + \sum_{t=2}^{T} \br{ \frac{1}{\eta_t} - \frac{1}{\eta_{t-1}} } \Delta_t + \sum_{t=1}^T -\ip{\bar g_t}{\xi_t} + \frac{L}{2}\sum_{t=1}^T \eta_t \|g_t\|^2 \notag\\
	&\leq \frac{\Dmax}{\eta_T} + \sum_{t=1}^T -\ip{\bar g_t}{\xi_t} + \frac{L}{2}\sum_{t=1}^T \eta_t \|g_t\|^2 \label{eq:intuition}
\end{align}
This modification solves the two aforementioned problems, but we now need to ensure boundedness of function values, specifically the maximum distance to the optimum, $\Dmax$. In fact, neural networks with bounded activations (e.g. sigmoid function) in the last layer and some objective functions in robust non-convex optimization (e.g. Welsch loss \citep{barron2017general}) satisfy bounded function values. However, this is a restrictive assumption to make for general smooth problems and we will \emph{prove} that it is bounded or at least it grows no faster than $O(\log(T))$. As a key element of our approach, we show that it is the case for Alg.~\ref{alg:adagrad} \& \ref{alg:template}. Now, we are at a position to state an overview of our proof:
\begin{enumerate}
	\item Show that $\Dmax \leq O(\log(T))$ with high probability or $\Dmax \leq O(1)$ (deterministic).
	\item Prove that $\sum_{t=1}^T -\ip{\bar g_t}{\xi_t} \leq \tilde O(\sqrt{T})$ with high probability using Lemma~\ref{lem:freedman}.
	\item Show that $\frac{L}{2} \sum_{t=1}^T \eta_t \|g_t\|^2 \leq O(\sqrt{T})$  by using Lemma~\ref{lem:technical-sqrt}.
\end{enumerate}
For completeness, we will propose a simple proof for AdaGrad in the deterministic setting. This will showcase advantages of our approach, while providing some insight into it. We provide a sketch of the proof, whose full version will be accessible in the appendix.
\begin{theorem} \label{thm:adagrad-deterministic}
    Let $x_t$ be generated by Algorithm~\ref{alg:adagrad} with $G_0 = 0$ for simplicity. Then, it holds that
    \begin{align*}
       \frac{1}{T} \sum_{t=1}^{T} \norm{\nabla f(x_t)}^2 \leq O \br{ \frac{(\Delta_1 + L)^2}{T} }.
    \end{align*}
\end{theorem}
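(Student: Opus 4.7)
The plan is to adopt the direct three-step recipe outlined in Section~\ref{sec:adagrad}, specialized to the deterministic regime where $g_t=\bg_t=\nabla f(x_t)$ and the noise $\xi_t$ vanishes. The whole proof then reduces to bounding the cumulative squared gradient $A := \sum_{t=1}^T \norm{g_t}^2$.

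First I would apply the descent lemma \eqref{eq:descent-lemma} at $x_{t+1} = x_t - \eta_t g_t$ to obtain
\[
f(x_{t+1}) \;\leq\; f(x_t) - \eta_t \norm{g_t}^2 + \tfrac{L}{2}\eta_t^2 \norm{g_t}^2,
\]
telescope this from $t=1$ to $T$, and use $f(x_{T+1}) \geq \min_x f(x)$ to arrive at
\[
\sum_{t=1}^T \eta_t \norm{g_t}^2 \;\leq\; \Delta_1 + \tfrac{L}{2}\sum_{t=1}^T \eta_t^2 \norm{g_t}^2.
\]

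Next I would translate both sides into explicit functions of $A$ via the AdaGrad-specific identities. By Lemma~\ref{lem:technical-sqrt}, $\sum_t \eta_t \norm{g_t}^2 = \sum_t \norm{g_t}^2 / \sqrt{\sum_{k\leq t}\norm{g_k}^2} \geq \sqrt{A}$, and by Lemma~\ref{lem:technical-log}, $\sum_t \eta_t^2 \norm{g_t}^2 = \sum_t \norm{g_t}^2 / \sum_{k\leq t}\norm{g_k}^2 \leq 1 + \log(1+A)$. Combining the two estimates produces the scalar inequality
\[
\sqrt{A} \;\leq\; \Delta_1 + \tfrac{L}{2}\bigl(1 + \log(1+A)\bigr).
\]

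The main obstacle is that this inequality is self-referential in $A$, and the implicit $L\log(1+A)$ term prevents a one-line solution whenever $L$ is not small. I would resolve it by a short bootstrap: since $\log(1+A)$ grows slower than any positive power of $A$, the inequality above forces $A$ to be at most polynomial in the problem constants $\Delta_1$ and $L$, so $\log(1+A) = O\bigl(\log(\Delta_1 + L)\bigr)$; feeding this back yields $\sqrt{A} = O(\Delta_1 + L)$ with a multiplicative logarithmic factor in $\Delta_1 + L$ that is absorbed into the $O(\cdot)$. Since $A = \sum_t \norm{\nabla f(x_t)}^2$, dividing by $T$ delivers the claimed $O((\Delta_1+L)^2/T)$ bound. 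As a sanity check, this route exactly mirrors the general blueprint of the paper with the martingale/noise step trivialized, so it also serves as a clean preparation for the high-probability analysis to come.
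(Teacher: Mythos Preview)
Your argument is sound but it is \emph{not} the paper's route. The paper first divides the descent inequality by $\eta_t$ and telescopes to obtain $\sum_t \|\bg_t\|^2 \le \Dmax/\eta_T + \tfrac{L}{2}\sum_t \eta_t\|\bg_t\|^2$, then bounds $\Dmax$ separately via a threshold trick: defining $t_0 = \max\{t:\eta_t > 2/L\}$, all terms with $t>t_0$ are non-positive, and the remaining sum $\tfrac{L}{2}\sum_{t\le t_0}\eta_t^2\|\bg_t\|^2$ is at most $\tfrac{L}{2}(1+\log(1+\eta_{t_0}^{-2})) \le \tfrac{L}{2}(1+\log(1+L^2/4))$. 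The point of this trick is that the logarithm involves only $L$, never $A$, so one gets a clean quadratic $X^2 \le (\Dmax+L)X$ with no self-reference. Your route instead keeps the $\eta_t$ weighting, lower-bounds the left side by $\sqrt{A}$ via Lemma~\ref{lem:technical-sqrt}, and lands on the implicit inequality $\sqrt{A}\le \Delta_1 + \tfrac{L}{2}(1+\log(1+A))$. Both yield the same $O((\Delta_1+L\log L)^2/T)$ order; the paper's version is cleaner and, more importantly, it \emph{is} the template reused in the stochastic proof (where dividing by $\eta_t$ is what decouples $\xi_t$ from the step size), so your final remark that your route ``mirrors the general blueprint'' and prepares for the high-probability case is not quite right.

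One concrete gap: your ``bootstrap'' paragraph is not a proof. Saying that $\log(1+A)$ grows slower than any power of $A$ only tells you $A$ is finite, not that $\sqrt{A}=O(\Delta_1+L)$. You need one explicit step, e.g.\ use $\log y \le y/a + \log a - 1$ with $y=\sqrt{A}$ and $a=2L$ to absorb the $\tfrac{L}{2}\log(1+A)$ term into $\tfrac12\sqrt{A}$, which then gives $\sqrt{A}\le 2\Delta_1 + O(L\log L)$. Without this (or the paper's $t_0$ trick), the argument is incomplete.
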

\begin{proof}[Proof Sketch (Theorem~\ref{thm:adagrad-deterministic})]
    Setting $\bar g_t = \nabla f(x_t)$, deterministic counterpart of Eq.~\eqref{eq:intuition} becomes
    \begin{align*}
        \sum_{t=1}^T \|\bg_t\|^2 ~\leq~ \frac{\Dmax}{\eta_T} + \frac{L}{2}\sum_{t=1}^T \eta_t \|\bg_t\|^2 ~\leq~ \br{\Dmax + L} \sqrt{ \sum_{t=1}^{T} \norm{\bg_t}^2 },
    \end{align*}
    where we obtain the final inequality using Lemma~\ref{lem:technical-sqrt}. Now, we show that $\Delta_{T+1}$ is bounded for any $T$. Using descent lemma and summing over $t \in [T]$,
    \begin{align*}
    	f(x_{T+1}) - f(x^*) &\leq f(x_1) - f(x^*) + \sum_{t=1}^{T} \br{ \frac{L \eta_t}{2} - 1} \eta_t \norm{\bg_t}^2.
    \end{align*}
    Now, define $t_0 = \max \bc{ t \in [T] \mid \eta_t > \frac{2}{L} }$, such that $\br{ \frac{L \eta_t}{2} - 1} \leq 0$ for any $t > t_0$. Then,
    \begin{align*}
    	f(x_{T+1}) - f(x^*) &\leq \Delta_1 + \sum_{t=1}^{t_0} \br{ \frac{L \eta_t}{2} - 1} \eta_t \norm{\bg_t}^2 + \sum_{t=t_0 + 1}^{T} \br{ \frac{L \eta_t}{2} - 1} \eta_t \norm{\bg_t}^2\\
	&\leq \Delta_1 + \frac{L}{2} \sum_{t=1}^{t_0} \eta_t^2 \norm{\bg_t}^2 \leq \Delta_1 + \frac{L}{2} \br{1 + \log \br{ 1 + L^2 / 4 } },
    \end{align*}
	where we use the definition of $t_0$ and Lemma~\ref{lem:technical-log} for the last inequality. Since this is true for any $T$, the bound holds for $\Dmax$, as well. Defining $X = \sqrt{\sum_{t=1}^T \|\bg_t\|^2}$, the original expression reduces to $X^2 \leq \br{ \Dmax + L } X $. Solving for $X$, plugging in the bound for $\Dmax$ and dividing by $T$ results in 
	\begin{align*}
		\frac{1}{T} \sum_{t=1}^T \|\nabla f(x_t)\|^2 \leq \frac{ \br{ \Delta_1 + \frac{L}{2} \br{ 3 + \log \br{ L^2 / 4 } } }^2 }{T}. 
	\end{align*} \vspace{-3mm}
\end{proof}
\begin{remark}
    To our knowledge, the most relevant analysis was provided by \citet{ward2019adagrad}, which achieves $\mathcal O (\log(T) / T)$ convergence rate. Our new approach enables us to remove $\log(T)$ factor.
\end{remark}

\subsection{High probability convergence under stochastic oracle} \label{sec:high-probability-results}
Having introduced the building blocks, we will now present the high probability convergence bound for AdaGrad (Algorithm~\ref{alg:adagrad}). Let us begin by the departure point of our proof, which is Eq.~\eqref{eq:intuition}
\begin{align}
    \sum_{t=1}^{T} \norm{\bar g_t}^2
    &\leq
    \underbrace{ \frac{\Dmax}{\eta_T} }_{(*)} + \underbrace{ \sum_{t=1}^T -\ip{\bar g_t}{\xi_t} }_{(**)} + \underbrace{ \frac{L}{2}\sum_{t=1}^T \eta_t \|g_t\|^2 }_{(***)}. \label{eq:intuition2}
\end{align}
We can readily bound expression ($***$) using Lemma~\ref{lem:technical-sqrt}. Hence, what remains is to argue about high probability bounds for expressions ($*$) and ($**$), which we do in the following propositions.
\begin{proposition} \label{prop:term(**)}
Using Lemma \ref{lem:freedman}, with probability $1 - 4\log(T)\delta$ and $\delta < 1/e$, we have
\begin{align*}
    \sum_{t=1}^{T} - \ip{\bg_t}{\xi_t} \leq 2 \sigma \sqrt{\log(1/\delta)} \sqrt{\sum_{t=1}^T \norm{\bg_t}^2 } + 3 (G^2 + G \tilde G){\log(1/\delta)}.
\end{align*}
\end{proposition}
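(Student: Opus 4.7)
The plan is a direct application of Freedman's inequality (Lemma~\ref{lem:freedman}) to the scalar sequence $X_t := -\ip{\bg_t}{\xi_t}$. Let $\mathcal F_{t-1}$ denote the natural filtration generated by $z_1,\ldots,z_{t-1}$, so that $x_t$ and therefore $\bg_t = \nabla f(x_t)$ are $\mathcal F_{t-1}$-measurable, while $\xi_t = g_t - \bg_t$ has $\mathbb E[\xi_t \mid \mathcal F_{t-1}] = 0$ by the unbiasedness assumption in Eq.~\eqref{eq:unbiasedness}. This immediately yields $\mathbb E[X_t\mid \mathcal F_{t-1}] = 0$, so $\{X_t\}$ is a martingale difference sequence and Lemma~\ref{lem:freedman} is applicable.

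The next step is to establish the two quantities needed by Freedman's inequality: an almost-sure bound $b$ on $|X_t|$ and a bound on the conditional variance $\var_{t-1}(X_t)$. By Cauchy--Schwarz, $|X_t| \le \|\bg_t\|\,\|\xi_t\| \le \|\bg_t\|(\|g_t\| + \|\bg_t\|)$. Applying the Lipschitz continuity bound on $\nabla f$ in Eq.~\eqref{eq:bounded-gradient}, i.e.\ $\|\bg_t\| \le G$, together with the almost-sure bound on stochastic gradients in Eq.~\eqref{eq:as-bounded}, i.e.\ $\|g_t\| \le \tilde G$, we obtain $|X_t| \le G(G+\tilde G)$, which we take as $b$. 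For the variance, again by Cauchy--Schwarz and the bounded-variance assumption in Eq.~\eqref{eq:bounded-variance},
\begin{align*}
   \var_{t-1}(X_t) = \mathbb E\bigl[\ip{\bg_t}{\xi_t}^2 \,\big|\, \mathcal F_{t-1}\bigr] \le \|\bg_t\|^2 \cdot \mathbb E\bigl[\|\xi_t\|^2 \,\big|\, \mathcal F_{t-1}\bigr] \le \sigma^2 \|\bg_t\|^2,
\end{align*}
so that $V_T := \sum_{t=1}^T \var_{t-1}(X_t) \le \sigma^2 \sum_{t=1}^T \|\bg_t\|^2$.

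Plugging $b = G(G+\tilde G)$ and the above bound on $V_T$ into Lemma~\ref{lem:freedman}, we obtain that with probability at least $1 - 4\log(T)\delta$,
\begin{align*}
   \sum_{t=1}^T X_t \le \max\!\left\{2\sqrt{V_T},\; 3b\sqrt{\log(1/\delta)}\right\}\sqrt{\log(1/\delta)}.
\end{align*}
Using the elementary inequality $\max\{A,B\} \le A+B$ for non-negative $A,B$, and substituting the bounds on $V_T$ and $b$, the right-hand side becomes
\begin{align*}
   2\sigma\sqrt{\log(1/\delta)}\sqrt{\sum_{t=1}^T\|\bg_t\|^2} + 3(G^2 + G\tilde G)\log(1/\delta),
\end{align*}
which is exactly the claimed bound.

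Honestly, I do not see a hard step here: all assumptions have been carefully front-loaded (boundedness of deterministic and stochastic gradients, bounded variance, unbiasedness), and the proposition is essentially a one-line corollary of Freedman's inequality applied to $\ip{\bg_t}{\xi_t}$. The only subtle point is the structural choice of \emph{not} squaring the inner product before summation; keeping it linear allows Freedman to yield a $\sqrt{V_T}\sqrt{\log(1/\delta)}$ term, which produces the $\sqrt{\sum\|\bg_t\|^2}$ dependence that later can be absorbed on the left-hand side of Eq.~\eqref{eq:intuition2}, avoiding a $1/\delta$ blow-up and preserving the optimal $\log(1/\delta)$ dependence advertised in the contributions.
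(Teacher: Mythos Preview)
Your proof is correct and follows essentially the same approach as the paper: identify $X_t = -\ip{\bg_t}{\xi_t}$ as a martingale difference sequence, bound $|X_t| \le G^2 + G\tilde G$ and $\var_{t-1}(X_t) \le \sigma^2\|\bg_t\|^2$ via Cauchy--Schwarz and the stated assumptions, then apply Freedman and relax the $\max$ to a sum. The only cosmetic difference is that the paper also explicitly verifies integrability of $X_t$, which is in any case immediate from your almost-sure bound.
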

The last ingredient of the analysis is the bound on $\Delta_t$. The following proposition ensures a high probability bound of order $O(\log(t))$ on $\Delta_t$ under Algorithm~\ref{alg:adagrad}.
\begin{proposition} \label{prop:function-bound}
	Let $x_t$ be generated by AdaGrad for $G_0 > 0$. With probability at least $1 - 4 \log(t) \delta$,
	\begin{align*}
		\Delta_{t+1} &\leq \Delta_1 + 2L \br{ 1 + \log \br{ \max \{ 1, G_0^2 \} + \tilde G^2 t } } + G_0^{-1} ( M_1 + \sigma^2 ) \log(1 / \delta) + M_2,
	\end{align*}
	where $M_1 = 3( G^2 + G \tilde G )$ and $M_2 = G_0^{-1}( 2G^2 + G \tilde G )$.
\end{proposition}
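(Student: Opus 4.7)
The plan is to sum the descent lemma over $s = 1, \ldots, t$, which yields
\begin{align*}
    \Delta_{t+1} \leq \Delta_1 - \sum_{s=1}^t \eta_s \|\bg_s\|^2 - \sum_{s=1}^t \eta_s \ip{\bg_s}{\xi_s} + \frac{L}{2}\sum_{s=1}^t \eta_s^2 \|g_s\|^2,
\end{align*}
and then to control each of the three sums on the right. The smoothness term $\frac{L}{2}\sum_s \eta_s^2 \|g_s\|^2$ is routine: writing $A_s = G_0^2 + \sum_{k\leq s}\|g_k\|^2$, the sum equals $\sum_s (A_s - A_{s-1})/A_s$, which is controlled by either the integral estimate $\log(A_t/A_0)$ (when $G_0^2 \geq 1$) or by Lemma~\ref{lem:technical-log} (when $G_0^2 < 1$), and the case split combined with $\|g_s\|^2 \leq \tilde G^2$ matches the $1 + \log(\max\{1, G_0^2\} + \tilde G^2 t)$ form. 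The hard part is arguing that the two remaining sums together contribute only constants plus a $\log(1/\delta)$ term, with no surviving $\sum_s \|\bg_s\|^2$ factor on the right-hand side.

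The stochastic cross-term cannot be handled by Lemma~\ref{lem:freedman} directly, because $\eta_s$ depends on $\|g_s\|^2$ and so $\eta_s \ip{\bg_s}{\xi_s}$ is not a martingale difference. My fix is to introduce the one-step-lagged surrogate $\hat\eta_s := (G_0^2 + \sum_{k<s}\|g_k\|^2)^{-1/2}$, which is $\Fcal_{s-1}$-measurable and satisfies $\eta_s \leq \hat\eta_s \leq 1/G_0$. Decomposing
\begin{align*}
    -\sum_s \eta_s \ip{\bg_s}{\xi_s} = \underbrace{-\sum_s \hat\eta_s \ip{\bg_s}{\xi_s}}_{\text{martingale}} \;+\; \underbrace{\sum_s (\hat\eta_s - \eta_s) \ip{\bg_s}{\xi_s}}_{\text{deterministic remainder}},
\end{align*}
the deterministic remainder is bounded via $|\ip{\bg_s}{\xi_s}| \leq G(G + \tilde G)$ together with the telescoping estimate $\sum_{s=1}^t (\hat\eta_s - \eta_s) \leq 1/G_0$, contributing $G_0^{-1}(G^2 + G\tilde G)$ to the final bound.

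The martingale piece is then treated by Lemma~\ref{lem:freedman} with $b \leq G(G+\tilde G)/G_0$ and conditional variance $V_t \leq \sigma^2 \sum_s \hat\eta_s^2 \|\bg_s\|^2$; the linear Freedman term $3b \log(1/\delta)$ immediately produces $G_0^{-1} M_1 \log(1/\delta)$. The crux of the argument is the square-root term: bounding $\hat\eta_s^2 \leq \hat\eta_s / G_0$ yields $V_t \leq (\sigma^2/G_0) \sum_s \hat\eta_s \|\bg_s\|^2$, and Young's inequality $2\sqrt{ab} \leq a + b$ splits $2\sqrt{V_t \log(1/\delta)}$ into $(\sigma^2/G_0)\log(1/\delta)$ plus $\sum_s \hat\eta_s \|\bg_s\|^2$. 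Writing $\sum_s \hat\eta_s \|\bg_s\|^2 = \sum_s \eta_s \|\bg_s\|^2 + \sum_s (\hat\eta_s - \eta_s)\|\bg_s\|^2$, the first summand \emph{cancels exactly} with the negative $-\sum_s \eta_s \|\bg_s\|^2$ from the descent expansion, while the second is again handled via $\|\bg_s\|^2 \leq G^2$ and the same telescoping identity, contributing $G^2/G_0$. Without this cancellation the right-hand side would carry an uncontrolled $\sum_s \|\bg_s\|^2$ factor, incompatible with the $O(\log t)$ growth promised by the statement. Collecting constants recovers $G_0^{-1}(M_1 + \sigma^2)\log(1/\delta) + M_2$ from the stochastic analysis, and the failure probability $4\log(t)\delta$ is inherited directly from the single application of Lemma~\ref{lem:freedman}.
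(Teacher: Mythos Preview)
Your proposal is correct and follows essentially the same route as the paper: descent lemma summed, logarithmic control of $\sum_s \eta_s^2\|g_s\|^2$ via Lemma~\ref{lem:technical-log}, lag-one decomposition of the cross term to make Freedman applicable, and cancellation of the Freedman variance contribution against the negative $-\sum_s \eta_s\|\bg_s\|^2$. The only cosmetic difference is in how that cancellation is executed: the paper keeps the variance as $\sigma^2\sum_s \eta_{s-1}^2\|\bg_s\|^2$, converts the negative term via $\eta_s \geq G_0\eta_s^2$ (then shifts $\eta_s^2\to\eta_{s-1}^2$) to match, and maximizes the concave function $x\mapsto 2\sigma\sqrt{\log(1/\delta)}\sqrt{x}-G_0 x$; your route (degrading $\hat\eta_s^2\leq\hat\eta_s/G_0$ first and then applying Young's inequality $2\sqrt{ab}\leq a+b$) is the same argument run from the other side and produces identical constants.
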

As an immediate corollary, since the statement of Proposition~\ref{prop:function-bound} holds for any $t$, it holds for $\Dmax$ by definition.
Hence, we have that $\max_{t \in [T]} \Delta_t = \Dmax \leq O \br{\Delta_1 + L\log(T) + \sigma^2 \log(1/\delta)}$ with high probability for any time horizon $T$.
In the light of the above results, we are now able to present our high probability bound for AdaGrad.
\begin{theorem}\label{thm:high-prob-adagrad}
    Let $x_t$ be the sequence of iterates generated by AdaGrad. Under Assumptions~\ref{eq:bounded-gradient},~\ref{eq:bounded-variance},~\ref{eq:as-bounded}, for $\Dmax \leq O \br{\Delta_1 + L\log(T) + \sigma^2 \log(1/\delta)}$, with probability at least $1 - 8 \log(T) \delta$,
    \begin{align*}
		\frac{1}{T} \sum_{t=1}^{T} \norm{\bg_t}^2 \leq  \frac{ \br{\Dmax + L}G_0 + 3 (G^2 + G \tilde G){\log(1/\delta)} }{T} + \frac{ \br{\Dmax + L} \tilde G + 2 G \sigma \sqrt{\log(1/\delta)} }{\sqrt{T}}.
	\end{align*}
\end{theorem}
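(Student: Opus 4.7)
The plan is to start directly from the decomposition in Eq.~\eqref{eq:intuition2}, bound each of its three terms by invoking Propositions~\ref{prop:function-bound} and~\ref{prop:term(**)} together with Lemma~\ref{lem:technical-sqrt}, and then combine everything via a union bound. The statement already factors out the bound on $\Dmax$ as a hypothesis, so the remaining work is essentially mechanical.

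For the first term $\Dmax/\eta_T$: by the AdaGrad stepsize rule, $1/\eta_T = \sqrt{G_0^2 + \sum_{k=1}^T \|g_k\|^2}$. Using subadditivity of the square root and the almost-sure bound $\|g_k\| \le \tilde G$ from \eqref{eq:as-bounded}, I get $1/\eta_T \le G_0 + \tilde G\sqrt{T}$, hence $\Dmax/\eta_T \le \Dmax(G_0 + \tilde G\sqrt{T})$. For the third term, Lemma~\ref{lem:technical-sqrt} (applied with the natural modification that accounts for the additive $G_0^2$ in the denominator) gives $\sum_{t=1}^T \eta_t\|g_t\|^2 \le 2\sqrt{G_0^2 + \sum_t\|g_t\|^2} \le 2(G_0 + \tilde G\sqrt{T})$, so $(L/2)\sum_t \eta_t\|g_t\|^2 \le L(G_0 + \tilde G\sqrt{T})$. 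These two contributions together yield exactly the $(\Dmax + L)(G_0 + \tilde G\sqrt{T})$ piece.

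For the martingale term, I apply Proposition~\ref{prop:term(**)}, which on an event of probability at least $1 - 4\log(T)\delta$ gives
\[
\sum_{t=1}^T -\langle \bg_t,\xi_t\rangle \le 2\sigma\sqrt{\log(1/\delta)}\sqrt{\textstyle\sum_t\|\bg_t\|^2} + 3(G^2 + G\tilde G)\log(1/\delta).
\]
To avoid an implicit inequality in $S := \sum_t \|\bg_t\|^2$, I use the bounded-gradient assumption \eqref{eq:bounded-gradient} to dominate $\sqrt{S} \le G\sqrt{T}$ inside the first term, producing the clean $2G\sigma\sqrt{T\log(1/\delta)}$ contribution that appears in the theorem.

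Finally, I take a union bound over the event from Proposition~\ref{prop:term(**)} and the event from Proposition~\ref{prop:function-bound} that certifies the hypothesis $\Dmax \le O(\Delta_1 + L\log T + \sigma^2\log(1/\delta))$; each fails with probability at most $4\log(T)\delta$, giving the stated $1 - 8\log(T)\delta$. Collecting all three bounds in \eqref{eq:intuition2} and dividing by $T$ reproduces the two-term expression of the theorem. I do not anticipate a real obstacle: the heavy lifting (the Freedman-style concentration and the bound on $\Dmax$) is already packaged into the propositions, and the only minor subtlety is choosing to upper bound $\sqrt{S}$ by $G\sqrt{T}$ rather than solving a self-referential quadratic in $\sqrt{S}$, which would yield the same rate but a less transparent expression.
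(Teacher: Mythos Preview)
Your proposal is correct and follows essentially the same route as the paper: start from Eq.~\eqref{eq:intuition2}, use Lemma~\ref{lem:technical-sqrt} on the $\sum_t \eta_t\|g_t\|^2$ term and Proposition~\ref{prop:term(**)} on the martingale term, then crudely bound $\sqrt{G_0^2+\sum_t\|g_t\|^2}\le G_0+\tilde G\sqrt{T}$ and $\sqrt{\sum_t\|\bg_t\|^2}\le G\sqrt{T}$ via \eqref{eq:as-bounded} and \eqref{eq:bounded-gradient}, and finish with a union bound against Proposition~\ref{prop:function-bound}. The only cosmetic difference is that the paper first combines $\Dmax/\eta_T$ and $(L/2)\sum_t\eta_t\|g_t\|^2$ into $(\Dmax+L)\sqrt{G_0^2+\sum_t\|g_t\|^2}$ before applying the gradient bounds, whereas you bound each piece separately; the arithmetic and the final expression are identical.
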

\begin{proof}[Proof Sketch (Theorem~\ref{thm:high-prob-adagrad})]
    Define $\bg_t = \nabla f(x_t)$ and $g_t = \nabla f(x_t;z_t) = \bg_t + \xi_t$. By Eq.~\eqref{eq:intuition2},
    \begin{align*}
	\sum_{t=1}^{T} \norm{\bar g_t}^2 &\leq
    \frac{\Dmax}{\eta_T} + \sum_{t=1}^T -\ip{\bar g_t}{\xi_t} + \frac{L}{2}\sum_{t=1}^T \eta_t \|g_t\|^2.
	\end{align*}
Invoking Lemma~\ref{lem:technical-sqrt} on the last sum and using Proposition~\ref{prop:term(**)} for the second expression, we have with probability at least $1 - 4 \log(T) \delta$
    \begin{align*}
    	\sum_{t=1}^{T} \norm{\bg_t}^2 &\leq \br{\Dmax + L} \sqrt{ G_0^2 + \sum_{t=1}^{T} \norm{g_t}^2 } + 2 \sigma \sqrt{\log(1/\delta)} \sqrt{\sum_{t=1}^T \norm{\bg_t}^2 } + 3 (G^2 + G \tilde G){\log(1/\delta)}
    \end{align*}
    Finally, we use the bounds on the gradient norms $\norm{\bg_t} \leq G $ and $\norm{g_t} \leq  \tilde G$, re-arrange the terms and divide both sides by $T$. Due to Proposition~\ref{prop:function-bound}, with probability at least $1 - 8 \log(T) \delta$,
    \begin{align*}
		\frac{1}{T} \sum_{t=1}^{T} \norm{\bg_t}^2 \leq  \frac{ \br{\Dmax + L}G_0 + 3 (G^2 + G \tilde G){\log(1/\delta)} }{T} + \frac{ \br{\Dmax + L} \tilde G + 2 G \sigma \sqrt{\log(1/\delta)} }{\sqrt{T}},
	\end{align*}
	where $\Dmax \leq O \br{\Delta_1 + L\log(T) + \sigma^2 \log(\frac{1}{\delta}}$. We keep $\Dmax$ in the bound due to lack of space.
\end{proof}

\subsection{Noise adaptation under sub-Gaussian noise model}
To our knowledge, under the standard setting we consider (unbiased stochastic gradients with bounded variance), noise adaptation is not achieved for high probability convergence to first-order stationary points, specifically for AdaGrad-type adaptive methods.
We call an algorithm \emph{noise adaptive} if the convergence rate improves $1 / \sqrt{T} \to 1/T$ as variance $\sigma \to 0$.
Following the technical results and approach proposed by \citet{li2020high}, we will prove that high probability convergence of AdaGrad (Algorithm~\ref{alg:adagrad}) exhibits adaptation to noise under sub-Gaussian noise model.
First, we will introduce the additional assumption on the noise. 
We assume that the tails of the noise behaves as sub-Gaussian if,
\begin{align} \label{eq:subgaussian}
	\Ex{ \exp( \norm{ \nabla f(x, z) - \nabla f(x) }^2 ) \mid x } \leq \exp(\sigma^2).
\end{align}
This last assumption on the noise is more restrictive than standard assumption of bounded variance. Indeed, Eq.~\eqref{eq:subgaussian} implies bounded variance (Eq.~\eqref{eq:bounded-variance}). Finally, we conclude with the main theorems. We first establish a high probability bound on $\Dmax$ and then present noise-adaptive rates for AdaGrad.
\begin{theorem}\label{thm:high-prob-objective}
	Let $x_t$ be generated by AdaGrad and define $\Delta_t = f(x_t) - \min_{x \in \mathbb R^d} f(x)$. Under sub-Gaussian noise assumption as in Eq.~\eqref{eq:subgaussian}, with probability at least $1 - 3 \delta$,
	\begin{align*}
		\Delta_{t+1} &\leq \Delta_1 + 3 G_0^{-1} G^2 + 2 G_0^{-1} \sigma^2 \log \br{\frac{e t}{\delta}} + \frac{3}{4 G_0} \sigma^2 \log(1 / \delta) \\
		&\quad+ \frac{L}{2} \br{ 1 + \log \br{ \max \bc{1, G_0^2} + 2 G^2 t + 2 \sigma^2 t \log \br{\frac{e t}{\delta}} } }.
	\end{align*}
\end{theorem}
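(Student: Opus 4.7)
The plan is to adapt the proof of Proposition~\ref{prop:function-bound}, replacing the almost-sure gradient bound $\tilde G$ by a high-probability bound on $\|\xi_s\|$ derived from the sub-Gaussian assumption~\eqref{eq:subgaussian}. The overall skeleton is unchanged: sum the descent inequality~\eqref{eq:descent-lemma}, apply Lemma~\ref{lem:technical-log} to the cumulative stepsize piece, and bound the noise inner-product via martingale concentration. The factor $1-3\delta$ in the probability corresponds to a union bound over three failure events introduced below.

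First I would start from~\eqref{eq:descent-lemma} applied to $x_{s+1}=x_s-\eta_s g_s$, substitute $g_s=\bar g_s+\xi_s$, use $\|g_s\|^2\leq 2\|\bar g_s\|^2+2\|\xi_s\|^2$, and split the sum at $t_0:=\max\{s\leq t:\eta_s>1/L\}$ exactly as in the proof of Theorem~\ref{thm:adagrad-deterministic}, which makes the contributions for $s>t_0$ non-positive. This yields a bound of the shape
\begin{align*}
\Delta_{t+1}\leq \Delta_1 + L\sum_{s=1}^{t_0}\eta_s^2\|\bar g_s\|^2 + L\sum_{s=1}^t\eta_s^2\|\xi_s\|^2 - \sum_{s=1}^t \eta_s\langle \bar g_s,\xi_s\rangle.
\end{align*}
To control $\|\xi_s\|$ pointwise I combine Markov's inequality with~\eqref{eq:subgaussian} to obtain $\Pr(\|\xi_s\|^2\geq u)\leq \exp(\sigma^2-u)$; choosing $u$ so that the right-hand side equals $\delta/t$ and union-bounding over $s\in[t]$ yields an event $\mathcal E_1$ of probability at least $1-\delta$ on which $\|\xi_s\|^2\leq \sigma^2\log(et/\delta)$ for every $s\leq t$. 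Combined with $\|\bar g_s\|\leq G$ this gives $\sum_s\|g_s\|^2\leq 2G^2 t + 2\sigma^2 t \log(et/\delta)$, and plugging into Lemma~\ref{lem:technical-log} produces exactly the logarithmic term in the last line of the theorem statement.

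The remaining piece is the noise inner-product $\sum_s -\eta_s\langle\bar g_s,\xi_s\rangle$, which is not a martingale because $\eta_s$ depends on $\xi_s$. I decompose $-\eta_s\langle \bar g_s,\xi_s\rangle = -\eta_{s-1}\langle \bar g_s,\xi_s\rangle + (\eta_{s-1}-\eta_s)\langle \bar g_s,\xi_s\rangle$. The first piece is a genuine martingale difference sequence, since $\eta_{s-1}$ is measurable with respect to $\xi_1,\ldots,\xi_{s-1}$; on $\mathcal E_1$ its increments are a.s.\ bounded by $G\sigma\sqrt{\log(et/\delta)}/G_0$ with conditional variance at most $\eta_{s-1}^2 G^2\sigma^2$. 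Applying Lemma~\ref{lem:freedman} conditionally on $\mathcal E_1$ yields, on an event $\mathcal E_2$ of probability at least $1-\delta$, a contribution of order $G_0^{-1}\sigma^2\log(1/\delta)$, matching the $(3/(4G_0))\sigma^2\log(1/\delta)$ term. The telescoping residual satisfies $\sum_s(\eta_{s-1}-\eta_s) = 1/G_0 - \eta_t \leq 1/G_0$, so on $\mathcal E_1$ it is bounded by $G\sigma\sqrt{\log(et/\delta)}/G_0$, giving the $G_0^{-1}\sigma^2\log(et/\delta)$ and $G_0^{-1}G^2$ contributions. The piece $L\sum_s\eta_s^2\|\xi_s\|^2$ is handled analogously using the uniform bound on $\|\xi_s\|^2$ from $\mathcal E_1$ and Lemma~\ref{lem:technical-log}.

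A union bound over $\mathcal E_1$, $\mathcal E_2$, and a possible third Freedman-type event gives total failure probability at most $3\delta$, and summing the surviving contributions recovers the stated inequality. The main obstacle I anticipate is the martingale step: the dependence of $\eta_s$ on $\xi_s$ breaks the martingale property, while sub-Gaussian (rather than a.s.-bounded) noise precludes a direct application of Lemma~\ref{lem:freedman} to $\eta_s\langle\bar g_s,\xi_s\rangle$. The decoupling via the $\eta_{s-1}$-proxy and the telescoping residual, together with preserving a $\log(1/\delta)$ dependence rather than $\log(t/\delta)$ in the martingale contribution, is the delicate piece of the argument, mirroring the strategy of~\citet{li2020high}.
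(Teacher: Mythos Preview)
Your proposal has a genuine gap, and it stems from the very first step: the $t_0$-splitting borrowed from Theorem~\ref{thm:adagrad-deterministic}. By absorbing the piece $-\eta_s\|\bar g_s\|^2$ into the bound $\eta_s\|\bar g_s\|^2(L\eta_s-1)\le L\eta_s^2\|\bar g_s\|^2$ for $s\le t_0$ and dropping it for $s>t_0$, you discard the negative term $-\sum_{s}\eta_s\|\bar g_s\|^2$ altogether. In the paper's argument this term is kept intact and is the key device for controlling the martingale contribution: after the $\eta_{s-1}$-decoupling (which you do correctly), the concentration step produces a positive term of the form $c\,\sigma^2\sum_s\eta_{s-1}^2\|\bar g_s\|^2$, and it is precisely the retained $-\sum_s\eta_s\|\bar g_s\|^2$ (via $\eta_s\ge G_0\eta_s^2$ and a shift from $\eta_s^2$ to $\eta_{s-1}^2$ at the cost of a telescoping residual) that cancels it, leaving only $\tfrac{3}{4G_0}\sigma^2\log(1/\delta)$. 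Without that negative term, Freedman's bound for $\sum_s -\eta_{s-1}\langle\bar g_s,\xi_s\rangle$ gives $2\sqrt{V_t\log(1/\delta)}$ with $V_t\le\sigma^2\sum_s\eta_{s-1}^2\|\bar g_s\|^2$, and you have no mechanism to keep $V_t$ bounded independently of $t$; the crude estimates $\eta_{s-1}\le G_0^{-1}$ and $\|\bar g_s\|\le G$ only yield $V_t=O(t)$, so the ``contribution of order $G_0^{-1}\sigma^2\log(1/\delta)$'' you claim does not follow.

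A second, more structural issue is your use of Lemma~\ref{lem:freedman} under the sub-Gaussian assumption. Freedman's inequality requires almost-sure bounded increments, which~\eqref{eq:subgaussian} does not provide; your workaround of ``applying Lemma~\ref{lem:freedman} conditionally on $\mathcal E_1$'' is delicate because conditioning on $\mathcal E_1$ destroys the martingale-difference property (the conditional mean of $\xi_s$ given $\mathcal E_1$ need not vanish). The paper sidesteps this entirely by invoking a different concentration lemma tailored to the sub-Gaussian setting (Lemma~1 of \citet{li2020high}), which for $Z_s=-\eta_{s-1}\langle\bar g_s,\xi_s\rangle$ and $Y_s^2=\sigma^2\eta_{s-1}^2\|\bar g_s\|^2$ directly yields $\sum_s Z_s\le \tfrac{3}{4}\lambda\sum_s Y_s^2+\lambda^{-1}\log(1/\delta)$ with a free parameter $\lambda$; choosing $\lambda=4G_0/(3\sigma^2)$ then makes the first term exactly cancel against the retained negative term. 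Your handling of the telescoping residual and of the $\eta_s^2\|g_s\|^2$ piece via Lemma~\ref{lem:technical-log} and the pointwise noise bound is correct and matches the paper, but the core cancellation mechanism is missing.
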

\begin{theorem} \label{thm:high-prob-convergence}
	Let $x_t$ be generated by AdaGrad and define $\Delta_t = f(x_t) - \min_{x \in \mathbb R^d} f(x)$. Under sub-Gaussian noise assumption as in Eq.~\eqref{eq:subgaussian} and considering high probability boundedness of $\Dmax$ due to Theorem~\ref{thm:high-prob-objective}, with probability at least $1 - 5 \delta$,
	\begin{align*}
		\frac{1}{T} \sum_{t=1}^{T} \norm{\bg_t}^2 &\leq \frac{ 32 \br{ \Dmax + L }^2 + 8\br{ \Dmax  + L } ( {G_0} + \sigma \sqrt{ 2 \log(1 / \delta) }  ) + 8\sigma^2 \log(1 / \delta) }{T} + \frac{ 8\sqrt{2} \br{ \Dmax + L } \sigma }{\sqrt{T}}.
	\end{align*}
\end{theorem}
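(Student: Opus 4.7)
My plan is to follow the same modular template used for Theorem~\ref{thm:high-prob-adagrad}, but to swap out Freedman's inequality for concentration arguments tailored to the sub-Gaussian noise assumption \eqref{eq:subgaussian}, which lets me extract explicit $\sigma$-dependence in every noise term. Starting from Eq.~\eqref{eq:intuition2} and applying Lemma~\ref{lem:technical-sqrt} to the last sum, I obtain
\[
\sum_{t=1}^T \|\bg_t\|^2 \le (\Dmax + L)\sqrt{G_0^2 + \sum_{t=1}^T \|g_t\|^2} + \sum_{t=1}^T -\ip{\bg_t}{\xi_t}.
\]
Next I would decompose $\|g_t\|^2 \le 2\|\bg_t\|^2 + 2\|\xi_t\|^2$ inside the square root and use subadditivity of $\sqrt{\cdot}$ to separate contributions, reducing the task to controlling $X:=\sqrt{\sum_{t=1}^T \|\bg_t\|^2}$, $\sqrt{\sum_{t=1}^T \|\xi_t\|^2}$, and the martingale sum $\sum_{t=1}^T -\ip{\bg_t}{\xi_t}$.

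For the sum of squared noise norms, the bound \eqref{eq:subgaussian} implies $\Ex{\exp(\|\xi_t\|^2)\mid \mathcal{F}_{t-1}} \le \exp(\sigma^2)$; iterating via the tower property gives $\Ex{\exp(\sum_{t=1}^T \|\xi_t\|^2)} \le \exp(T\sigma^2)$, and Markov's inequality yields $\sum_{t=1}^T \|\xi_t\|^2 \le T\sigma^2 + \log(1/\delta)$ with probability at least $1-\delta$, hence $\sqrt{\sum_t \|\xi_t\|^2} \le \sigma\sqrt{T} + \sqrt{\log(1/\delta)}$. For the martingale sum, note that $\ip{\bg_t}{\xi_t}$ is $\mathcal{F}_{t-1}$-conditionally mean-zero, and by Cauchy--Schwarz coupled with the norm sub-Gaussian tail of $\xi_t$ it is conditionally sub-Gaussian with parameter proportional to $\|\bg_t\|^2\sigma^2$. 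A self-normalized sub-Gaussian martingale bound then gives, with probability at least $1-\delta$,
\[
\sum_{t=1}^T -\ip{\bg_t}{\xi_t} \le \sigma \sqrt{2 X^2 \log(1/\delta)}.
\]

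Substituting these two concentration bounds into the preliminary inequality and collecting terms yields a quadratic in $X$ of the form $X^2 \le aX + b$ with $a = \sqrt{2}(\Dmax+L)+\sigma\sqrt{2\log(1/\delta)}$ and $b=(\Dmax+L)\bigl[G_0 + \sqrt{2}\sigma\sqrt{T} + \sqrt{2\log(1/\delta)}\bigr]$. Using the elementary implication $X^2 \le aX + b \Longrightarrow X^2 \le 2a^2 + 2b$, expanding $(u+v)^2 \le 2u^2 + 2v^2$, and dividing by $T$ reproduces the stated bound up to absolute constants; the $\sigma^2\log(1/\delta)/T$ term in the theorem arises from squaring $a$, while the $\sigma/\sqrt{T}$ term arises from $b$. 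Finally, Theorem~\ref{thm:high-prob-objective} controls $\Dmax$ with probability at least $1-3\delta$, so a union bound over the two concentration events and the $\Dmax$ event gives the stated $1-5\delta$ confidence.

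The main obstacle is the martingale concentration for $\sum -\ip{\bg_t}{\xi_t}$ with the \emph{data-adaptive} scale $X^2\sigma^2$: Freedman (Lemma~\ref{lem:freedman}) requires almost-sure boundedness of increments, which is not available under \eqref{eq:subgaussian}, so one needs a genuine sub-Gaussian martingale inequality (for instance, following the approach of \citet{li2020high} or a method-of-mixtures self-normalized bound). A secondary bookkeeping difficulty is routing the various $\sqrt{\log(1/\delta)}$ factors through the quadratic so that they combine into a single $\log(1/\delta)/T$ contribution in the final rate, rather than leaking into the leading $1/\sqrt{T}$ term and destroying noise adaptation.
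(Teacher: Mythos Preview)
Your overall skeleton matches the paper's: start from Eq.~\eqref{eq:intuition2}, apply Lemma~\ref{lem:technical-sqrt}, split $\|g_t\|^2\le 2\|\bg_t\|^2+2\|\xi_t\|^2$ under the square root, use subadditivity, and reduce to a quadratic in $X=\sqrt{\sum_t\|\bg_t\|^2}$. The two departures are in how the martingale term and the noise-norm sum are controlled.

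For $\sum_t-\ip{\bg_t}{\xi_t}$ you aim at a self-normalized bound $\sigma X\sqrt{2\log(1/\delta)}$ and correctly flag this as the main obstacle (the scale $X$ is random, so a naive sub-Gaussian tail does not apply without a grid/union argument that costs extra log factors). The paper sidesteps this obstacle entirely: it invokes the Li--Orabona concentration lemma (Lemma~1 of \citet{li2020high}) with the \emph{fixed} choice $\lambda=1/\sigma^2$ and $Y_t^2=\sigma^2\|\bg_t\|^2$, which gives
\[
\sum_{t=1}^T -\ip{\bg_t}{\xi_t}\;\le\;\tfrac{3}{4}\sum_{t=1}^T\|\bg_t\|^2+\sigma^2\log(1/\delta).
\]
Because the coefficient $3/4<1$, the first term is absorbed into the left-hand side, leaving $\tfrac14\sum_t\|\bg_t\|^2$ there and only the clean $\sigma^2\log(1/\delta)$ on the right. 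The resulting quadratic is $X^2-4\sqrt{2}(\Dmax+L)X-4c\le 0$ with $c=(\Dmax+L)\bigl(G_0+\sigma\sqrt{2\log(1/\delta)}+\sigma\sqrt{2T}\bigr)+\sigma^2\log(1/\delta)$, and solving it yields exactly the constants $32$, $8$, $8\sqrt2$ in the statement. Your route could be made to work, but only after supplying the self-normalized inequality you note is missing; the fixed-$\lambda$ absorption trick is both simpler and what produces the stated bound verbatim.

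For $\sum_t\|\xi_t\|^2$ your MGF/Markov argument is valid, but it gives a residual $\sqrt{\log(1/\delta)}$ without a $\sigma$ prefactor, which does not reproduce the $\sigma\sqrt{2\log(1/\delta)}$ term in the theorem. The paper instead centers: it shows $\|\xi_t\|^2-\sigma^2$ is a (super)martingale difference and applies a variant of the same Li--Orabona lemma to obtain $\sum_t(\|\xi_t\|^2-\sigma^2)\le\sigma^2\log(1/\delta)$, so that $\sqrt{2\sum_t(\|\xi_t\|^2-\sigma^2)}\le\sigma\sqrt{2\log(1/\delta)}$. This is why every noise term in the final bound carries a $\sigma$ and the rate is fully noise-adaptive.
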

\begin{remark}
	By introducing the sub-Gaussian noise model, we manage to achieve a high probability convergence bound that is \emph{adaptive to noise}, while \emph{removing the dependence on a bound on stochastic gradients} in the final result.
\end{remark}


\if 0
Let us begin by the departure point of our proof, which is due to \citet{ghadimi2016accelerated}.
\begin{proposition} \label{prop:main-bound}
    Let $x_t$ be generated by Algorithm~\ref{alg:template} where $g_t = \tnabla f(\bar x_t)$ and $\bg_t = \nabla f(\bar x_t)$. Then, it holds that
    \begin{align}
        \sum_{t=1}^{T} \norm{\bg_t}^2 \leq \frac{\kappa + 2L}{\eta_T} +\frac{L}{2 \eta_T} \sum_{t=1}^{T} \underbrace{ \bs{ \sum_{k=t}^{T} (1 - \alpha_k) {\Gamma_{k}} } \frac{\alpha_t}{\Gamma_t}}_{(*)} \frac{(\eta_t - \gamma_t)^2}{\alpha_t^2} \norm{ g_t }^2 + \underbrace{ \sum_{t=1}^{T} - \ip{\bg_t}{\xi_t} }_{(**)}.
    \end{align}
\end{proposition}
Next, we will deliver complementary bounds on the marked terms in Proposition~\ref{prop:main-bound}.
\begin{proposition} \label{prop:term(*)} We have
    \begin{align*}
        \bs{ \sum_{k=t}^{T} (1 - \alpha_k) {\Gamma_{k}} } \frac{\alpha_t}{\Gamma_t} \leq
        \begin{cases} 
            2 & \text{if}~~\alpha_t = \frac{2}{t+1}; \\[2mm]
            \log(T+1) & \text{if}~~\alpha_t = \frac{1}{t}.
        \end{cases}
    \end{align*}
\end{proposition}
\vspace{2mm}
\begin{proposition} \label{prop:term(**)}
Using Lemma \ref{lem:freedman}, with probability $1 - 4\log(T)\delta$ with $\delta > 1/e$, we have
\begin{align*}
    \sum_{t=1}^{T} - \ip{\bg_t}{\xi_t} \leq 3{\log(1/\delta)} \left( (G^2 + G \tilde G) + \sigma \sqrt{ \sum_{t=1}^T \norm{\bg_t}^2 } \right).
\end{align*}
\end{proposition}
The last ingredient of the analysis is the bound on $\Delta_t$. The following proposition ensures a high probability bound on $\Delta_t$ under Algorithm~\ref{alg:template}, which is valid for all 3 algorithms we concentration on.
\begin{proposition} \label{prop:function-bound}
	Let $x_t$ be generated by Algorithm~\ref{alg:template} with weighted averaging, i.e., $\alpha_t = 2 / (t+1)$. With probability $1 - 4 \log(T) \delta$, AdaGrad, AdaGrad with averaging and RSAG ensure that,
	\begin{align*}
		\Delta_{T+1} &\leq \Delta_1 + 2L \br{ 1 + \log \br{ \max \{ 1, G_0^2 \} + \tilde G^2 T } } + G_0^{-1} ( M_1 + \sigma^2 ) \log(1 / \delta) + M_2,
	\end{align*}
	where $M_1 = 3( G^2 + G \tilde G )$ and $M_2 = G_0^{-1}( 2G^2 + G \tilde G )$.
\end{proposition}
As an immediate corollary of Proposition~\ref{prop:function-bound}, since the statement of Proposition~\ref{prop:function-bound} holds for $\Dmax$ by definition, we have that $\Dmax \leq O \br{\Delta_1 + L\log(T) + \sigma^2 \log(1/\delta)}$ for any finite $T$.
In the light of the above results, we are now able to present our high probability bounds for AdaGrad, its variant with averaging and adaptive RSAG.
\begin{theorem}\label{thm:high-prob-adagrad}
    Let $x_t$ be the sequence of iterates generated by AdaGrad. Under Assumptions~\ref{eq:bounded-gradient},~\ref{eq:bounded-variance}, ~\ref{eq:as-bounded}, for $\kappa_T \leq O \br{\Delta_1 + L\log(T) + \sigma^2 \log(1/\delta)}$, with probability $1 - 8 \log(T) \delta$,
    \begin{align*}
		\frac{1}{T} \sum_{t=1}^{T} \norm{\bg_t}^2 \leq  \frac{ \br{\kappa_T + 2L}G_0 + 3 (G^2 + G \tilde G){\log(1/\delta)} }{T} + \frac{ \br{\kappa_T + 2L} \tilde G + 2 G \sigma \sqrt{\log(1/\delta)} }{\sqrt{T}}
	\end{align*}
\end{theorem}
\vspace{2mm}
\begin{theorem}\label{thm:high-prob-rsag}
    Let $x_t$ be the sequence of iterates generated by AdaGrad with averaging or adaptive RSAG. Under Assumptions~\ref{eq:bounded-gradient},~\ref{eq:bounded-variance}, ~\ref{eq:as-bounded}, for $\kappa_T \leq O \br{\Delta_1 + L\log(T) + \sigma^2 \log(1/\delta)}$, with probability $1 - 8 \log(T) \delta$,
    \begin{align*}
		\frac{1}{T} \sum_{t=1}^{T} \norm{\bg_t}^2 &\leq \frac{G_0 \br{\kappa_T + 3L + L \log \br{ \max \{ 1, G_0^{-2} \} + \tilde G^2 T } } + 3 (G^2 + G \tilde G){\log(1/\delta)})}{T}\\
		&\quad+ \frac{ \tilde G \br{\kappa_T + 3L + L \log \br{ \max \{ 1, G_0^{-2} \} + \tilde G^2 T } } + 2 G \sigma \sqrt{\log(1/\delta)} }{\sqrt{T}} \\
	\end{align*}
\end{theorem}

\begin{proof}[Proof Sketch (Theorem~\ref{thm:high-prob-adagrad})]
    By Proposition\ref{prop:main-bound}, we have
    \begin{align*}
		\sum_{t=1}^{T} \norm{\bg_t}^2 \leq \frac{\kappa_T + 2L}{\eta_T} +\frac{L}{2 \eta_T} \sum_{t=1}^{T} \underbrace{ \bs{ \sum_{k=t}^{T} (1 - \alpha_k) {\Gamma_{k}} } \frac{\alpha_t}{\Gamma_t}}_{\textrm{($\ast$)}} \frac{(\eta_t - \gamma_t)^2}{\alpha_t^2} \norm{ g_t }^2 + \underbrace{ \sum_{t=1}^{T} - \ip{\bg_t}{\xi_t} }_{\textrm{($\ast\ast$)}}.
	\end{align*}
With $\eta_t = \gamma_t = \teta_t$ and Proposition~\ref{prop:term(**)}, we have with probability at least $1 - 4 \log(T) \delta$
    \begin{align*}
    	\sum_{t=1}^{T} \norm{\bg_t}^2 &\leq  \br{\kappa_T + 2L}G_0 + 3 (G^2 + G \tilde G){\log(1/\delta)} + \bs{ \br{\kappa_T + 2L} \tilde G + 2 G \sigma \sqrt{\log(1/\delta)} } \sqrt{T}
    \end{align*}
    One should observe that we have both $\eta_T^{-1} = \sqrt{ G_0^2 + \sum_{t=1}^T \norm{g_t}^2 }$ and $\sqrt{ \sum_{t=1}^T \norm{\bg_t}^2 }$ on the right hand side, which we cannot handle simultaneously. Hence, we use bounded gradients assumptions on these terms. Finally, plugging in the high probability bound for $\kappa_T$ from Proposition~\ref{prop:function-bound}, with probability at least $1 - 8 \log(T) \delta$,
    \begin{align*}
		\frac{1}{T} \sum_{t=1}^{T} \norm{\bg_t}^2 \leq  \frac{ \br{\kappa_T + 2L}G_0 + 3 (G^2 + G \tilde G){\log(1/\delta)} }{T} + \frac{ \br{\kappa_T + 2L} \tilde G + 2 G \sigma \sqrt{\log(1/\delta)} }{\sqrt{T}},
	\end{align*}
	where $\kappa_T \leq O \br{\Delta_1 + L\log(T) + \sigma^2 \log(1/\delta)}$.
\end{proof}
\fi

\section{Generalization to AGD template} \label{sec:generalization}

\if 0
averaging and momentum are important primitives.
local-sgd of rditributed
avergaing and adaptivity for acceleration (levy, lan)

$G_0$: if function values are ebounded, we could make it 0. it is usually required for numerical stability in pratice.
\fi

Having proven the high probability convergence for AdaGrad, we will now present an extension of our analysis to the more general accelerated gradient (AGD) template, which corresponds to a specific reformulation of Nesterov's acceleration \citep{ghadimi2016accelerated}.
\begin{algorithm}[H]
\caption{Generic AGD Template} \label{alg:template}
\begin{algorithmic}[1]
\Input{ Horizon $T$ , $\tilde x_1 = x_1 \in \mathbb R^d$, $\alpha_t \in (0,1]$, step-sizes $\bc{\eta_t}_{t \in [T]}$, $\bc{\gamma_t}_{t \in [T]}$}
\For{$t = 1, ..., T$}
	\Indent
		\State $\bar x_{t} = \alpha_t x_t + (1 - \alpha_t) \tilde x_{t}$ \label{line:template-averaging}
		\State Set $g_t = \nabla f(\bar x_t)$ or $g_t = \tnabla f(\bar x_t) = \nabla f(x_t; z_t)$
		\State $x_{t+1} = x_t - \eta_t g_t$ \label{line:template-md}
		\State $\tilde x_{t+1} = \bar x_t - \gamma_t g_t$ \label{line:template-gd}
	\EndIndent
\EndFor
\end{algorithmic}
\end{algorithm} \vspace{-3mm}
This particular reformulation was recently referred to as linear coupling \citep{allenzhu2016linear}, which is an intricate combination of mirror descent (MD), gradient descent (GD) and averaging. 
In the sequel, we focus on two aspects of the algorithm; averaging parameter $\alpha_t$ and selection of (adaptive) step-sizes $\eta_t$ and $\gamma_t$.
We could recover some well-known algorithms from this generic scheme depending on parameter choices, which we display in Table~\ref{tbl:recovered-algorithms}.

Our reason behind choosing this generic algorithm is two-fold. First, it helps us demonstrate flexibility of our simple, modular proof technique by extending it to a generalized algorithmic template. Second, as an integral element of this scheme, we want to study the notion of averaging (equivalently momentum~\citep{defazio2021momentum}), which is an important primitive for machine learning and optimization problems. For instance, it is necessary for achieving \emph{accelerated} convergence in convex optimization \citep{nesterov1983acceleration, kavis2019universal}, while it helps improve performance in neural network training and stabilizes the effect of noise \citep{defazio2021momentum, sutskever2013importance, liu2020improved}. 

Next, we will briefly introduce instances of Algorithm~\ref{alg:template}, their properties and the corresponding parameter choices. We identify algorithms regarding the choice of averaging parameter $\alpha_t$ and step sizes $\eta_t$ and $\gamma_t$. 
The averaging parameter $\alpha_t$ has two possible forms: $\alpha_t = {2}/{(t+1)}$ for weighted averaging and $\alpha_t = {1}/{t}$ for uniform averaging.
We take $\alpha_t = 2 / (t+1)$ by default in our analysis, as it is a key element in achieving acceleration in the convex setting.
Let us define the AdaGrad step size once more, which we use to define $\eta_t$ and $\gamma_t$,
\begin{align} \label{eq:eta}
    \teta_t = ( G_0^2 + \sum\nolimits_{k=1}^{t} \norm{g_k}^2 )^{-1/2},~~~~~~~~~~G_0 > 0.
\end{align}
The first instance of Algorithm~\ref{alg:template} is the AdaGrad, i.e. $x_{t+1} = x_t - \teta_t g_t$.
Since $\tilde x_1 = x_1$ by initialization, we have $\bar x_1 = \tilde x_1 = x_1$.
The fact that $\eta_t = \gamma_t = \teta_t$  implies the equivalence $\tilde x_t = x_t$ for any $t \in [T]$, which ignores averaging step.
The second instance we obtain is AdaGrad with averaging,
\begin{equation}
\begin{aligned} \label{eq:adagrad-averaging} 
    \bar x_t &= \alpha_t x_t + (1 - \alpha_t) \bar x_{t-1}\\ 
    x_{t+1} &= x_t - \alpha_t \eta_t g_t,
\end{aligned}
\end{equation}
where $\eta_t = \teta_t$ and $\gamma_t = 0$. For the initialization $x_1 = \tilde x_1$, we can obtain by induction that $\tilde x_t = \bar x_{t-1}$, hence the scheme above. The final scheme we will analyze is RSAG algorithm proposed by \citet{ghadimi2016accelerated}. It selects a step size pair that satisfies $\gamma_t \approx (1 + \alpha_t) \teta_t$ and $\eta_t = \teta_t$, generating a 3-sequence algorithm as in the original form of Algorithm~\ref{alg:template}.
\begin{table}[H]
\caption{Example methods covered by the generic AGD template.} \label{tbl:recovered-algorithms}
\vspace{\baselineskip}
\begin{center}
\renewcommand{\arraystretch}{1.4}
\begin{tabular}{ | l | l | l | }
\hline
\textbf{Algorithm} & \textbf{Weights ($\alpha_t$)} & \textbf{Step-size} ($\eta_t$,$\gamma_t$) \\
\hline
{\bf AdaGrad} & N/A & $\eta_t = \teta_t$, $\,\,\,\,\,\quad\gamma_t = \eta_t$\\ 
\hline
{\bf AdaGrad w/ Averaging} & $\alpha_t = \frac{2}{t+1}$ or $\frac{1}{t}$ & $\eta_t = \alpha_t \teta_t$, $\,\,\,\,\,\,\gamma_t = 0$ \\ 
\hline
{\bf Adaptive RSAG\citep{ghadimi2016accelerated}} &  $\alpha_t = \frac{2}{t+1}$ & $\eta_t = \teta_t$, $\,\,\,\,\,\,\quad\gamma_t = (1 + \alpha_t)\eta_t$ \\ 
\hline
{\bf AcceleGrad\citep{levy2018online}} & $\alpha_t = \frac{2}{t+1}$\ & $\eta_t \approx \frac{1}{\alpha_t} \teta_t$, $\,\,\,\,\,\,\gamma_t \approx \teta_t$ \\
\hline 
\end{tabular}
\end{center}
\end{table}
\vspace{-3mm}
Before moving on to convergence results, we have an interesting observation concerning time-scale difference between step sizes for AdaGrad, AdaGrad with averaging and RSAG. 
Precisely, $\gamma_t$ is always \emph{only} a constant factor away from $\eta_t$. 
This phenomenon has an immediate connection to acceleration in the convex realm, we will independently discuss at the end of this section.

Having defined instances of Algorithm~\ref{alg:template}, we will present high probability convergence rates for them. Similar to Eq.~\eqref{eq:intuition2} for AdaGrad, we first define a departure point of similar structure in Proposition~\ref{prop:main-bound}, then apply Proposition~\ref{prop:term(**)} and~\ref{prop:function-bound} in the same spirit as before to finalize the bound.

Let us define $\bg_t = \nabla f(\bar x_t)$ and $g_t = \nabla f(\bar x_t; z_t) = \bg_t + \xi_t$. Following the notation in \citep{ghadimi2016accelerated}, let $\Gamma_t = (1 - \alpha_t) \Gamma_{t-1}$ with $\Gamma_1 = 1$. Now, we can begin with the departure point of the proof, which is due to \citet{ghadimi2016accelerated}.
\begin{proposition} \label{prop:main-bound}
    Let $x_t$ be generated by Algorithm~\ref{alg:template} where $g_t = \nabla f(\bar x_t;z_t) = \bg_t + \xi_t$ and $\bg_t = \nabla f(\bar x_t)$. Then, it holds that
    \begin{align*}
        \sum_{t=1}^{T} \norm{\bg_t}^2 \leq \frac{\Dmax + 2L}{\eta_T} +\frac{L}{2 \eta_T} \sum_{t=1}^{T} \underbrace{ \bs{ \sum_{k=t}^{T} (1 - \alpha_k) {\Gamma_{k}} } \frac{\alpha_t}{\Gamma_t}}_{(*)} \frac{(\eta_t - \gamma_t)^2}{\alpha_t^2} \norm{ g_t }^2 + \underbrace{ \sum_{t=1}^{T} - \ip{\bg_t}{\xi_t} }_{(**)}.
    \end{align*}
\end{proposition}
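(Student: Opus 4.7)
The plan is to generalize the one-step descent argument behind Eq.~\eqref{eq:intuition} to the coupled AGD template, using the virtual-sequence potential of \citet{ghadimi2016accelerated} to handle the decoupling of the MD stepsize $\eta_t$ from the GD stepsize $\gamma_t$. I would proceed in four steps: (i) smoothness descent at the GD update to bound $f(\tilde x_{t+1}) - f(\bar x_t)$, (ii) a smoothness transfer from $\bar x_t$ to $\tilde x_t$ using the averaging identity, (iii) unrolling the MD displacement $x_t - \tilde x_t$ by induction, and (iv) $\eta_t^{-1}$-weighted telescoping in the style of Eq.~\eqref{eq:intuition} to collapse the function-value differences into $\Dmax/\eta_T$.

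Step (i) is immediate: the descent lemma~\eqref{eq:descent-lemma} applied to $\tilde x_{t+1} = \bar x_t - \gamma_t g_t$ gives
\begin{align*}
f(\tilde x_{t+1}) \leq f(\bar x_t) - \gamma_t \|\bg_t\|^2 - \gamma_t \ip{\bg_t}{\xi_t} + \tfrac{L \gamma_t^2}{2}\|g_t\|^2.
\end{align*}
For step (ii) I would expand $f(\bar x_t)$ around $\tilde x_t$ via smoothness together with the identity $\bar x_t - \tilde x_t = \alpha_t(x_t - \tilde x_t)$ (Line~\ref{line:template-averaging}), and combine the result with the normalization $\Gamma_t = (1-\alpha_t)\Gamma_{t-1}$ to obtain a $\Gamma$-weighted one-step bound relating $f(\tilde x_{t+1})/\Gamma_t$ to $f(\tilde x_t)/\Gamma_{t-1}$, with residuals consisting of the noise inner product, the GD quadratic $\tfrac{L\gamma_t^2}{2}\|g_t\|^2$, and a transported quadratic proportional to $\alpha_t^2\|x_t - \tilde x_t\|^2$. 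Step (iii) starts from the identity
\begin{align*}
x_{t+1} - \tilde x_{t+1} = (1-\alpha_t)(x_t - \tilde x_t) + (\gamma_t - \eta_t)\, g_t,
\end{align*}
obtained by subtracting Lines~\ref{line:template-md} and~\ref{line:template-gd} using Line~\ref{line:template-averaging}; iterating from $x_1 = \tilde x_1$ expresses $x_t - \tilde x_t$ as a weighted sum of past $(\gamma_s - \eta_s) g_s$ with products $\prod_{i>s}(1-\alpha_i)$ as coefficients. Summing over $t$ and Abel-summing to swap the $s$- and $t$-indices, the weights collapse precisely to $[\sum_{k=t}^T (1-\alpha_k)\Gamma_k](\alpha_t/\Gamma_t) \cdot (\eta_t - \gamma_t)^2/\alpha_t^2$ multiplying $\|g_t\|^2$.

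To reach the announced form with unit weight on $\|\bg_t\|^2$ and $1/\eta_T$ in front of $\Dmax$, I would finally divide the one-step recursion by $\eta_t$ before summing, mirroring the AdaGrad derivation of Eq.~\eqref{eq:intuition}: the Abel-summed $\sum_t (1/\eta_t - 1/\eta_{t-1})\Delta_t$ then collapses to $\Dmax/\eta_T$, and the residual $\tfrac{L}{2}\sum_t \eta_t \|g_t\|^2$ is controlled by $L/\eta_T$ through Lemma~\ref{lem:technical-sqrt}, together accounting for the $(\Dmax + 2L)/\eta_T$ factor; the noise sum $\sum_t -\ip{\bg_t}{\xi_t}$ passes through unchanged because it is linear in $\xi_t$. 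The main obstacle is the bookkeeping in steps (iii)--(iv): I must reconcile the $\Gamma_t$-weighting natural for the acceleration potential with the $\eta_t^{-1}$-weighting inherited from the AdaGrad-style telescoping, and verify that the MD-displacement cross contributions collapse \emph{exactly} into the announced $(\eta_t - \gamma_t)^2\|g_t\|^2/\alpha_t^2$ residual---without leaving stray terms that would spoil the structure. Once that cancellation is in place, the rest of the argument is routine.
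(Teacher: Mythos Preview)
Your plan diverges from the paper at the very first choice: the paper does \emph{not} run the descent lemma on the GD iterate $\tilde x_{t+1}$, and there is no $\Gamma$-weighted potential on $f(\tilde x_t)$ anywhere in the argument. Instead, the paper applies smoothness directly to the MD iterate,
\[
f(x_{t+1}) - f(x_t) \;\le\; -\eta_t \ip{\nabla f(x_t)}{g_t} + \tfrac{L\eta_t^2}{2}\|g_t\|^2,
\]
then writes $\nabla f(x_t) = \bg_t + (\nabla f(x_t)-\bg_t)$ and controls the cross term via Cauchy--Schwarz, $L$-smoothness, and Young to obtain
\[
f(x_{t+1}) - f(x_t) \;\le\; -\eta_t\|\bg_t\|^2 - \eta_t\ip{\bg_t}{\xi_t} + \tfrac{L}{2}\|\bar x_t - x_t\|^2 + L\eta_t^2\|g_t\|^2.
\]
Dividing by $\eta_t$ and Abel-summing gives $\Dmax/\eta_T$ with unit weight on $\|\bg_t\|^2$ for free; Lemma~\ref{lem:technical-sqrt} turns $L\sum_t \eta_t\|g_t\|^2$ into the remaining $2L/\eta_T$. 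The quantity $\|\bar x_t - x_t\|^2$ is then unrolled exactly as in your step~(iii) (indeed $\bar x_t - x_t = -(1-\alpha_t)(x_t-\tilde x_t)$), and the double sum is reordered to produce the $(\ast)$ weight. No $\Gamma$-weighting ever enters the telescoping, so the ``reconciliation'' you flag as the main obstacle simply does not arise.

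Your route has a concrete failure mode that the paper's choice avoids. Step~(i) produces $-\gamma_t\|\bg_t\|^2$, so after dividing by $\eta_t$ you get $-(\gamma_t/\eta_t)\|\bg_t\|^2$. For AdaGrad with averaging the template takes $\gamma_t=0$, and then your scheme generates \emph{no} $\|\bg_t\|^2$ term at all---there is nothing to move to the left-hand side. (Relatedly, your step~(iv) anticipates the residual $\tfrac{L}{2}\sum_t \eta_t\|g_t\|^2$, but step~(i) actually produces $\tfrac{L\gamma_t^2}{2}\|g_t\|^2$; the $\eta_t^2$ factor you need comes in the paper from the MD step and from the Young-inequality correction, neither of which appears in your decomposition.) The fix is precisely what the paper does: anchor the potential on $f(x_t)$ so that the descent term carries $\eta_t$, and treat the discrepancy between $\nabla f(x_t)$ and $\bg_t$ as a smoothness correction that feeds the $\|\bar x_t - x_t\|^2$ term you already know how to unroll.
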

Next, we will deliver the complementary bound on term ($*$) in Proposition~\ref{prop:main-bound}.
\begin{proposition} \label{prop:term(*)} Using the recursive definition of~~$\Gamma$, we have
    \begin{align*}
        \bs{ \sum_{k=t}^{T} (1 - \alpha_k) {\Gamma_{k}} } \frac{\alpha_t}{\Gamma_t} \leq
        \begin{cases} 
            2 & \text{if}~~\alpha_t = \frac{2}{t+1}; \\[2mm]
            \log(T+1) & \text{if}~~\alpha_t = \frac{1}{t}.
        \end{cases}
    \end{align*}
\end{proposition}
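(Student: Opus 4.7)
\textbf{Proof plan for Proposition~\ref{prop:term(*)}.} The proof is essentially a bookkeeping exercise built around the recursion $\Gamma_k = (1-\alpha_k)\Gamma_{k-1}$, so the plan is to compute $\Gamma_t$ and $\alpha_t/\Gamma_t$ explicitly in each of the two cases and then to bound the tail sum $\sum_{k=t}^T (1-\alpha_k)\Gamma_k$ by something of order $\Gamma_t/\alpha_t$.

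The first step is to solve the recursion $\Gamma_k = (1-\alpha_k)\Gamma_{k-1}$, $\Gamma_1 = 1$, in closed form for both weight choices. For $\alpha_k = 2/(k+1)$, iterating the product $\prod_{k=2}^t (k-1)/(k+1)$ telescopes and gives $\Gamma_t = 2/(t(t+1))$, hence $\alpha_t/\Gamma_t = t$. For $\alpha_k = 1/k$, the analogous product $\prod_{k=2}^t (k-1)/k$ telescopes to $\Gamma_t = 1/t$, hence $\alpha_t/\Gamma_t = 1$. I would also record the clean identity $\alpha_k \Gamma_{k-1} = \Gamma_{k-1} - \Gamma_k$ (equivalently $\alpha_k/\Gamma_k = 1/\Gamma_k - 1/\Gamma_{k-1}$), since it is the standard telescoping tool in the Ghadimi--Lan framework and is likely to be reused elsewhere in the paper.

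The second step is the tail-sum bound. In the weighted case, plugging in the closed form yields
\[
(1-\alpha_k)\Gamma_k \eq \frac{k-1}{k+1}\cdot\frac{2}{k(k+1)} \le \frac{2}{k(k+1)} \eq 2\left(\frac{1}{k}-\frac{1}{k+1}\right),
\]
so that $\sum_{k=t}^T (1-\alpha_k)\Gamma_k \le 2(1/t - 1/(T+1)) \le 2/t$. Multiplying by $\alpha_t/\Gamma_t = t$ gives the bound of $2$. In the uniform case,
\[
(1-\alpha_k)\Gamma_k \eq \frac{k-1}{k}\cdot\frac{1}{k} \le \frac{1}{k},
\]
and the standard integral bound $\sum_{k=t}^T 1/k \le \log(T+1)$ (which can be stated cleanly for $t=1$, the extremal case) combined with $\alpha_t/\Gamma_t = 1$ yields the second bound.

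I do not expect any serious obstacle. The only subtle point is the constant in the uniform harmonic-sum bound: strictly speaking one has $\sum_{k=1}^T 1/k \le 1 + \ln T$, so the stated bound $\log(T+1)$ should be understood as an order statement (or else one slightly tightens the integral comparison $1/k \le \int_{k-1}^{k} dx/x$ and absorbs the leading term into the $\log$). Since the proposition is used only to pull out a logarithmic factor when $\alpha_t = 1/t$, this constant is inessential to the downstream arguments.
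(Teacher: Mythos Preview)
Your proposal is correct and follows essentially the same route as the paper: compute $\Gamma_t$ in closed form for each weight choice, bound $(1-\alpha_k)\Gamma_k$ by $2/(k(k+1))$ (telescoping) in the weighted case and by $1/k$ (harmonic) in the uniform case, and multiply by $\alpha_t/\Gamma_t$. The only refinement the paper adds concerns your remark on the harmonic constant: since $\alpha_1 = 1$ forces $(1-\alpha_1)\Gamma_1 = 0$, the sum effectively starts at $k=2$, and $\sum_{k=2}^{T} 1/k \le \int_1^T dx/x = \ln T \le \log(T+1)$ holds exactly, so no ``order statement'' caveat is needed.
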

Finally, we present the high probability convergence rates for AdaGrad with averaging and RSAG.
\begin{theorem}\label{thm:high-prob-rsag}
    Let $x_t$ be the sequence generated by AdaGrad with averaging or adaptive RSAG. Under Assumptions~\ref{eq:bounded-gradient},~\ref{eq:bounded-variance},~\ref{eq:as-bounded}, for $\Dmax \leq O \br{\Delta_1 + L\log(T) + \sigma^2 \log(1/\delta)}$, with probability $1 - 8 \log(T) \delta$,
    \begin{align*}
		\frac{1}{T} \sum_{t=1}^{T} \norm{\bg_t}^2 &\leq \frac{G_0 (\Dmax + 3L + L \log ( \max \{ 1, G_0^{-2} \} + \tilde G^2 T ) ) + 3 (G^2 + G \tilde G){\log(1/\delta)})}{T}\\
		&\quad+ \frac{ \tilde G (\Dmax + 3L + L \log ( \max \{ 1, G_0^{-2} \} + \tilde G^2 T ) ) + 2 G \sigma \sqrt{\log(1/\delta)} }{\sqrt{T}}.
	\end{align*}
\end{theorem}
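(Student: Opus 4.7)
The plan is to mimic the argument for Theorem~\ref{thm:high-prob-adagrad}, with the departure point now given by Proposition~\ref{prop:main-bound} in place of Eq.~\eqref{eq:intuition2}. The key simplifying observation that makes both instances behave uniformly is that the combination $(\eta_t-\gamma_t)^2/\alpha_t^2$ collapses to $\teta_t^2$ in either case: for AdaGrad with averaging we have $\eta_t = \alpha_t \teta_t$ and $\gamma_t = 0$, so $\eta_t - \gamma_t = \alpha_t \teta_t$; for adaptive RSAG we have $\eta_t = \teta_t$ and $\gamma_t = (1+\alpha_t)\teta_t$, so $\eta_t - \gamma_t = -\alpha_t \teta_t$. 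Squaring and dividing by $\alpha_t^2$ yields $\teta_t^2$ in both cases. With $\alpha_t = 2/(t+1)$, Proposition~\ref{prop:term(*)} bounds the bracketed factor by $2$, so the middle term of Proposition~\ref{prop:main-bound} reduces to $\frac{L}{\eta_T}\sum_{t=1}^{T}\teta_t^2\|g_t\|^2$, which is now in a form amenable to the AdaGrad machinery.

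Next, I would control that log-type sum with Lemma~\ref{lem:technical-log}: since $\teta_t^{-2} = G_0^2 + \sum_{k\le t}\|g_k\|^2$, a telescoping bound of the form $\sum_t \teta_t^2 \|g_t\|^2 \leq 1 + \log(\max\{1,G_0^{-2}\} + \tilde G^2 T)$ follows after replacing $\|g_t\|^2$ with the almost-sure bound $\tilde G^2$. For the $1/\eta_T$ factor in the first term of Proposition~\ref{prop:main-bound}, the same almost-sure bound yields $1/\eta_T \leq G_0 + \tilde G \sqrt{T}$. Combining the first two summands thus produces a contribution of the shape $(\Dmax + 3L + L\log(\max\{1,G_0^{-2}\} + \tilde G^2 T))(G_0 + \tilde G\sqrt{T})$, which already exhibits the structure of the two leading factors in the claimed bound.

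For the martingale piece $\sum_t -\ip{\bg_t}{\xi_t}$, I would invoke Proposition~\ref{prop:term(**)}, which, with probability at least $1 - 4\log(T)\delta$, dominates this sum by $3(G^2+G\tilde G)\log(1/\delta)$ plus $2\sigma\sqrt{\log(1/\delta)}\sqrt{\sum_t \|\bg_t\|^2}$; applying $\|\bg_t\|\leq G$ on the square root turns the second piece into $2G\sigma\sqrt{\log(1/\delta)}\sqrt{T}$, which after division by $T$ contributes the $2G\sigma\sqrt{\log(1/\delta)}/\sqrt{T}$ term in the theorem. Assembling all three pieces from Proposition~\ref{prop:main-bound}, dividing by $T$, and tracking constants produces the stated inequality conditional on a single high-probability event.

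The final ingredient is the high-probability bound on $\Dmax$ itself. I would invoke the template-level extension of Proposition~\ref{prop:function-bound}, which holds for Algorithm~\ref{alg:template} under parameter choices with $\gamma_t \asymp \teta_t$ and in particular covers both variants here, giving $\Dmax \leq O(\Delta_1 + L\log T + \sigma^2\log(1/\delta))$ with probability at least $1 - 4\log(T)\delta$. A union bound with the Freedman event of the previous paragraph yields the stated $1 - 8\log(T)\delta$ confidence. The main technical obstacle is precisely this $\Dmax$ bound away from the pure AdaGrad case: the averaging step and the two distinct scales $\eta_t,\gamma_t$ perturb the clean descent telescoping that drove Proposition~\ref{prop:function-bound}, and one must verify that the extra $\gamma_t \|g_t\|^2$-type contributions telescope against $1/\teta_t$ analogously to the AdaGrad argument, exploiting the fact that $\gamma_t$ and $\teta_t$ differ only by a constant factor. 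Once this is handled, the rest of the proof is a mechanical application of the three-step recipe established for Theorem~\ref{thm:high-prob-adagrad}.
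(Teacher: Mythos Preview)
Your proposal is correct and follows essentially the same route as the paper's proof: start from Proposition~\ref{prop:main-bound}, observe that $(\eta_t-\gamma_t)^2/\alpha_t^2=\teta_t^2$ for both variants, apply Proposition~\ref{prop:term(*)} with $\alpha_t=2/(t+1)$ to reduce the middle sum to $(L/\eta_T)\sum_t \teta_t^2\|g_t\|^2$, bound this via Lemma~\ref{lem:technical-log}, handle the martingale term with Proposition~\ref{prop:term(**)}, crudely bound $\sqrt{\sum\|g_t\|^2}$ and $\sqrt{\sum\|\bg_t\|^2}$ using $\tilde G,G$, and finish with a union bound against Proposition~\ref{prop:function-bound} (whose Case~2 in the appendix is exactly the template-level extension you anticipate). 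The only thing you leave slightly implicit is that the paper's Proposition~\ref{prop:main-bound} is applied with $\eta_T=\teta_T$ (not $\alpha_T\teta_T$), but this matches the paper's own usage.
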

\paragraph{A discussion on acceleration and nonconvex analysis:} AGD and its variants are able to converge at the fast rate of $\mathcal O (1/T^2)$~\citep{nesterov2003introductory} for smooth, convex objectives, while matching the convergence rate of GD in the nonconvex landscapes.
In fact, the mechanism that allows them to converge faster could even restrict their performance when convexity assumption is lifted.
We will conclude this section with a brief discussion on this phenomenon.

As we mentioned previously, step-sizes for AdaGrad, its averaging variant and RSAG have the same time scale up to a constant factor. 
However, AcceleGrad, an accelerated algorithm, has a time-scale difference of $\mathcal O (t)$ between $\eta_t$ and $\gamma_t$, and it runs with a modified step-size of $\eta_t = ( 1 + \sum_{k=1}^{t} \alpha_k^{-2} \norm{g_t}^2 )^{-1/2}$.
This scale difference is not possible to handle with standard approaches or our proposed analysis. If we look at second term in Proposition~\ref{prop:main-bound}, it roughly evaluates to
\begin{align*}
    \frac{L}{2 \eta_T} \sum_{t=1}^{T} \frac{\eta_t^2}{\alpha_t^4} \norm{ g_t }^2,
\end{align*}
where each summand is $\mathcal O (t^4)$ orders of magnitude larger compared to other methods. A factor of $\alpha_t^{-2} = t^2$ is absorbed by the modified step-size, but this term still grows faster than we can manage. We aim to understand it further and improve upon in our future attempts.


\section{Conclusions} \label{sec:conclusion}

We propose a simple and modular high probability analysis for a class of AdaGrad-type algorithms. 
Bringing AdaGrad into the focus, we show that our new analysis techniques goes beyond and generalizes to the accelerated gradient template (Algorithm~\ref{alg:template}) which individually recovers AdaGrad, AdaGrad with averaging and adaptive version of RSAG~\citep{ghadimi2016accelerated}.
By proposing a modification over standard analysis and relying on concentration bounds for martingales, we achieve high probability convergence bounds for the aforementioned algorithms \emph{without} requiring the knowledge of smoothness $L$ and variance $\sigma$ while having best-known dependence of $\log(1 / \delta)$ on $\delta$.
To  our knowledge, this is the first such result for adaptive methods, including AdaGrad. 

\if 0
As a future direction, we will focus on two main directions:
\begin{itemize}
    \item Extending our bounds and improving our analysis for accelerated adaptive methods.
    \item Improving our dependence on $\sigma$ and problem parameters such that the rate interpolation between $1/T$ and $1/\sqrt{T}$ is achievable when variance tends to zero.
\end{itemize}
\fi 

\section*{Acknowledgments}
This project has received funding from the European Research Council (ERC) under the European Union's Horizon 2020 research and innovation programme (grant agreement no 725594 - time-data)
K.Y. Levy acknowledges support from the Israel Science Foundation (grant No. 447/20).

\bibliography{refs}
\bibliographystyle{iclr2022_conference}

\iftrue
\clearpage
\appendix
{
\section{Appendix}

\subsection{Proof of technical lemmas in Section~\ref{sec:technical-lemmas}}

\begin{replemma}{lem:technical-sqrt}
    Let $a_1, ..., a_n$ be a sequence of non-negative real numbers. Then, it holds that 
    \begin{align*}
        \sqrt{ \sum_{i=1}^{n} a_i } \leq \sum_{i=1}^{n} \frac{a_i}{ \sqrt{ \sum_{k=1}^{i} a_k } } \leq 2 \sqrt{ \sum_{i=1}^{n} a_i }
    \end{align*}
\end{replemma}
\begin{proof}
	For the proof of first iniequality, please check Appendix A.4 of \citet{levy2018online}, while that of the second one can be found at Appendix B of \citet{mcmahan2010adaptive}, which corresponds to their Lemma 5.
\end{proof}

\begin{replemma}{lem:technical-log}
    Let $a_1, ..., a_n$ be a sequence of non-negative real numbers. Then, it holds that 
    \begin{align*}
        \sum_{i=1}^{n} \frac{a_i}{ \sum_{k=1}^{i} a_i } \leq 1 + \log \br{ 1 + \sum_{i=1}^{n} a_i }
    \end{align*}
\end{replemma}
\begin{proof}
	We will follow the proof steps of \citet{levy2018online} with a slight modification. The proof is due to induction.
	
	For the base case of $n=1$:
	\begin{align*}
		\frac{a_1}{a_1} = 1 \leq 1 + \log(1 + a_1)
	\end{align*}
	Assume that the statement holds up to and including $n-1 > 1$. Then, for $n$:
	\begin{align*}
		\sum_{i=1}^{n} \frac{a_i}{ \sum_{j=1}^{i} a_j } \leq 1 + \log \br{ 1 + \sum_{i=1}^{n-1} a_i } + \frac{a_n}{ \sum_{i=1}^{n} a_i } \overset{?}\leq 1 + \log \br{ 1 + \sum_{i=1}^{n} a_i }
	\end{align*}
	We want to show that for any $a_n$, the second inequality with the question mark (?) holds. Let us define $x = \frac{a_n}{ \sum_{i=1}^{n-1} a_i }$. Focusing on the second inequality and re arranging the terms we get,
	\begin{align*}
		\frac{a_n}{ \sum_{i=1}^{n} a_i } &\leq \log \br{ \frac{ 1 + \sum_{i=1}^{n} a_i }{ 1 + \sum_{i=1}^{n-1} a_i } } \\
		&= \log \br{ 1 + \frac{ a_n }{ 1 + \sum_{i=1}^{n-1} a_i } } \\
		&\leq \log \br{ 1 + \frac{ a_n }{ \sum_{i=1}^{n-1} a_i } } \\
	\end{align*}
	Notice that
	\begin{align*}
		\frac{a_n}{ \sum_{i=1}^{n} a_i } &= \frac{a_n}{\sum_{i=1}^{n-1} a_i} \cdot \frac{\sum_{i=1}^{n-1} a_i}{\sum_{i=1}^{n} a_i} = \frac{a_n}{\sum_{i=1}^{n-1} a_i} \cdot \frac{1}{\frac{\sum_{i=1}^{n} a_i}{\sum_{i=1}^{n-1} a_i}} = \frac{a_n}{\sum_{i=1}^{n-1} a_i} \cdot \frac{1}{\br{ 1 + \frac{a_n}{\sum_{i=1}^{n-1} a_i} } } \\
		&= x \frac{1}{1 + x}
	\end{align*}
	Combining both expressions,
	\begin{align*}
		\frac{x}{1 + x} \leq \log (1 + x)
	\end{align*}
	which \emph{always} holds whenever $x \geq 0$.
\end{proof}

\begin{replemma}{lem:freedman}[Lemma 3 in \cite{kakade2008generalization}]
	Let $X_t$ be a martingale difference sequence such that $\abs{X_t} \leq b$. Let us also define
	\begin{align*}
		\var_{t-1}(X_t) = \var \left( X_t \mid X_1, ..., X_{t-1} \right) = \mathbb E \left[ X_t^2 \mid X_1, ... , X_{t-1} \right],
	\end{align*}
	and define $V_T = \sum_{t=1}^{T} \var_{t-1} (X_t)$ as the sum of variances. For $\delta < 1/e$ and $T \geq 3$, it holds that
	\begin{align}
		\Prob{ \sum_{t=1}^{T} X_t > \max \left\{ 2 \sqrt{V_T}, 3 b \sqrt{ \log(1/\delta) } \right\} \sqrt{ \log(1/\delta) } } \leq 4\log(T) \delta
	\end{align}
\end{replemma}
\begin{proof}
	The proof of this lemma could be found at the beginning of the Appendix section of \citet{kakade2008generalization}, which is their Lemma 3 in the main text.
	
\end{proof}

\subsection{Proofs in Section~\ref{sec:high-probability-results}}

\begin{reptheorem}{thm:adagrad-deterministic}
    Let $x_t$ be generated by Algorithm~\ref{alg:adagrad} with $G_0 = 0$ for simplicity. Then, it holds that
    \begin{align*}
       \frac{1}{T} \sum_{t=1}^{T} \norm{\nabla f(x_t)}^2 \leq O \br{ \frac{(\Delta_1 + L)^2}{T} }.
    \end{align*}
\end{reptheorem}
\begin{proof}[Proof (Theorem~\ref{thm:adagrad-deterministic})]
    Setting $\bar g_t = \nabla f(x_t)$,~deterministic counterpart of Eq.~\eqref{eq:intuition} becomes
    \begin{align*}
        \sum_{t=1}^T \|\bg_t\|^2 ~\leq~ \frac{\Dmax}{\eta_T} + \frac{L}{2}\sum_{t=1}^T \eta_t \|\bg_t\|^2 ~\leq~ \br{\Dmax + L} \sqrt{ \sum_{t=1}^{T} \norm{\bg_t}^2 } ,
    \end{align*}
    where we obtain the final inequality using Lemma~\ref{lem:technical-sqrt}. Now, we show that $\Delta_{T+1}$ is bounded for any $T$. Using descent lemma and the update rule for $x_t$,
    \begin{align*}
    	f(x_{t+1}) - f(x_t) &\leq \ip{\bg_t}{x_{t+1} - x_t} + \frac{L}{2} \norm{x_{t+1} - x_t}^2\\
    	&\leq -\eta_t \norm{\bg_t}^2 + \frac{L \eta_t^2}{2} \norm{\bg_t}^2
    \end{align*}
    Summing over $t \in [T]$, telescoping function values and re-arranging right-hand side,
    \begin{align*}
        f(x_{T+1}) - f(x_t) &\leq \sum_{t=1}^{T} \br{\frac{L \eta_t}{2} - 1} \eta_t \norm{\bg_t}^2\\
        f(x_{T+1}) - f(x^*) &\leq f(x_1) - f(x^*) + \sum_{t=1}^{T} \br{\frac{L \eta_t}{2} - 1} \eta_t \norm{\bg_t}^2\\
    \end{align*}
    where $x^* = \arg \min_{x\ in \mathbb R^d} f(x)$. Now, define $t_0 = \max \bc{ t \in [T] \mid \eta_t > \frac{2}{L} }$, such that $\br{ \frac{L \eta_t}{2} - 1} \leq 0$ for any $t > t_0$. Then,
    \begin{align*}
    	f(x_{T+1}) - f(x^*) &\leq \Delta_1 + \sum_{t=1}^{t_0} \br{ \frac{L \eta_t}{2} - 1} \eta_t \norm{\bg_t}^2 + \sum_{t=t_0 + 1}^{T} \br{ \frac{L \eta_t}{2} - 1} \eta_t \norm{\bg_t}^2\\
	&\leq \Delta_1 + \frac{L}{2} \sum_{t=1}^{t_0} \eta_t^2 \norm{\bg_t}^2 \tag{Lemma~\ref{lem:technical-log}}\\
	&\leq \Delta_1 + \frac{L}{2} \br{ 1 + \log \br{ 1 + \sum_{t=1}^{t_0} \norm{\bg_t}^2 } } \tag{Definition of $\eta_t$}\\
	&\leq \Delta_1 + \frac{L}{2} \br{ 1 + \log \br{ 1 + \frac{1}{\eta_{t_0}^2} } } \tag{Definition of $t_0$}\\
	&\leq \Delta_1 + \frac{L}{2} \br{1 + \log \br{ 1 + \frac{L^2}{4} } },
    \end{align*}
	where we use the definition of $t_0$ and Lemma~\ref{lem:technical-log} for the last inequality. Since this is true for any $T$, the bound holds for $\Dmax$ such that $\Dmax \leq \Delta_1 + \frac{L}{2} \br{1 + \log \br{ L^2 / 4 } }$. Now, define $X = \sqrt{\sum_{t=1}^T \|\bg_t\|^2}$, then the original expression reduces to $X^2 \leq \br{ \Dmax + L } X $. Solving for $X$ trivially yields
	\begin{align*}
	    X \leq \br{ \Dmax + L } \implies X^2 =  \sum_{t=1}^T \|\bg_t\|^2 \leq \br{ \Dmax + L }^2.
	\end{align*}
	Plugging in the bound for $\Dmax$ and dividing by $T$ gives,
	\begin{align*}
		\frac{1}{T} \sum_{t=1}^T \|\nabla f(x_t)\|^2 \leq \frac{ \br{ \Delta_1 + \frac{L}{2} \br{ 3 + \log \br{ 1 + \frac{L^2}{4} } } }^2 }{T}
	\end{align*}
\end{proof}

\begin{reptheorem}{thm:high-prob-adagrad}
    Let $x_t$ be the sequence of iterates generated by AdaGrad. Under Assumptions~\ref{eq:bounded-gradient},~\ref{eq:bounded-variance}, ~\ref{eq:as-bounded}, for $\Dmax \leq O \br{\Delta_1 + L\log(T) + \sigma^2 \log(1/\delta)}$, with probability at least $1 - 8 \log(T) \delta$,
    \begin{align*}
		\frac{1}{T} \sum_{t=1}^{T} \norm{\bg_t}^2 \leq  \frac{ \br{\Dmax + L}G_0 + 3 (G^2 + G \tilde G){\log(1/\delta)} }{T} + \frac{ \br{\Dmax + L} \tilde G + 2 G \sigma \sqrt{\log(1/\delta)} }{\sqrt{T}}
	\end{align*}
\end{reptheorem}
\begin{proof}[Proof (Theorem~\ref{thm:high-prob-adagrad})]
    Define $\bg_t = \nabla f(x_t)$ and $g_t = \nabla f(x_t;z_t) = \bg_t + \xi_t$. By Eq.~\eqref{eq:intuition2},
    \begin{align*}
		\sum_{t=1}^{T} \norm{\bar g_t}^2
    &\leq
    \frac{\Dmax}{\eta_T} + \sum_{t=1}^T -\ip{\bar g_t}{\xi_t} + \frac{L}{2}\sum_{t=1}^T \eta_t \|g_t\|^2
	\end{align*}
	Invoking Lemma~\ref{lem:technical-sqrt} and plugging the bound for the term ($\ast\ast$) from Proposition~\ref{prop:term(**)} we achieve with probability $1 - 4\log(T) \delta$,
	\begin{align*}
		\sum_{t=1}^{T} \norm{\bg_t}^2 &\leq \br{\Dmax + L} \sqrt{ G_0^2 + \sum_{t=1}^T \norm{g_t}^2 } + 2 \sigma \sqrt{\log(1/\delta)} \sqrt{\sum_{t=1}^T \norm{\bg_t}^2 } + 3 (G^2 + G \tilde G){\log(1/\delta)} \\
		&\leq  \br{\Dmax + L} \sqrt{ G_0^2 + \tilde G^2 T } + 2 G \sigma \sqrt{\log(1/\delta)} \sqrt{T} + 3 (G^2 + G \tilde G){\log(1/\delta)}\\
		&\leq  \br{\Dmax + L}G_0 + 3 (G^2 + G \tilde G){\log(1/\delta)} + \bs{ \br{\Dmax + 2L} \tilde G + 2 G \sigma \sqrt{\log(1/\delta)} } \sqrt{T}
	\end{align*}
	Dividing both sides by T, we achieve the bound,
	\begin{align*}
		\frac{1}{T} \sum_{t=1}^{T} \norm{\bg_t}^2 \leq  \frac{ \br{\Dmax + L}G_0 + 3 (G^2 + G \tilde G){\log(1/\delta)} }{T} + \frac{ \br{\Dmax + L} \tilde G + 2 G \sigma \sqrt{\log(1/\delta)} }{\sqrt{T}}
	\end{align*}
	Now, we will incorporate the high probability bound for $\Dmax$ to complete the convergence proof.
	Essentially, we are interested in scenarios in which both the statement of Proposition~\ref{prop:term(**)} and the statement of Proposition~\ref{prop:function-bound} holds, simultaneously, with high probability.
	Formally, let the statement of Proposition~\ref{prop:term(**)} be denoted as event $A$ and the statement of Proposition~\ref{prop:function-bound} as event $B$. We have already proven that
	\begin{align*}
		\Prob{A} \geq 1 - 4 \log(T) \delta~~~~~~~~~~\&~~~~~~~~~~\Prob{B} \geq 1 - 4 \log(T) \delta
	\end{align*}
	What we want to obtain is a lower bound to $\Prob{A \cap B}$, which is
	\begin{align*}
		\Prob{A \cap B} &= \Prob{A} + \Prob{B} - \Prob{A \cup B}\\
		&\geq 1 - 4 \log(T) \delta + 1 - 4 \log(T) \delta - \Prob{A \cup B}\\
		&\geq 2 - 8 \log(T) \delta - 1 = 1 - 8 \log(T) \delta,
	\end{align*}
	which is the best we could do due to the unknown extent of dependence between events $A$ and $B$. Hence, integrating the results of Proposition~\ref{prop:function-bound}, with probability at least $1 - 8 \log(T) \delta$,
	\begin{align*}
		\frac{1}{T} \sum_{t=1}^{T} \norm{\bg_t}^2 \leq  \frac{ \br{\Dmax + L}G_0 + 3 (G^2 + G \tilde G){\log(1/\delta)} }{T} + \frac{ \br{\Dmax + L} \tilde G + 2 G \sigma \sqrt{\log(1/\delta)} }{\sqrt{T}}
	\end{align*}
	where
	\begin{align*}
		\Dmax &\leq \Delta_1 + 2L \br{ 1 + \log \br{ \max \{ 1, G_0^2 \} + \tilde G^2 T } } + G_0^{-1} ( 3( G^2 + G \tilde G ) + \sigma^2 ) \log(1 / \delta) + G_0^{-1}( 2G^2 + G \tilde G )\\
	\end{align*}
\end{proof}

\begin{repproposition}{prop:term(**)}
Using Lemma \ref{lem:freedman}, with probability $1 - 4\log(T)\delta$ with $\delta < 1/e$, we have
\begin{align*}
    \sum_{t=1}^{T} - \ip{\bg_t}{\xi_t} \leq 2 \sigma \sqrt{\log(1/\delta)} \sqrt{\sum_{t=1}^T \norm{\bg_t}^2 } + 3 (G^2 + G \tilde G){\log(1/\delta)}.
\end{align*}
\end{repproposition}
\begin{proof}
	We have to show that the random variable $- \ip{\bg_t}{\xi_t}$ is a martingale difference sequence and satisfies the conditions in Lemma~\ref{lem:freedman}. Let us define $\mathcal F_t = \sigma(\xi_t, ..., \xi_1)$ as the $\sigma$-algebra generated by randomness up to, and including $\xi_t$. Notice that $\mathcal F_t$ is the natural filtration of $- \ip{\bg_t}{\xi_t}$. Then, we need to show that
	\begin{enumerate}
		\item $- \ip{\bg_t}{\xi_t}$ is integrable,
		\item \emph{martingale (difference) property} holds, $\Ex{ - \ip{\bg_t}{\xi_t} \vert \mathcal F_{t-1} } = 0$.
	\end{enumerate}
	First off, we show that $- \ip{\bg_t}{\xi_t}$ is integrable:
	\begin{align*}
		\Ex{ \abs{ \ip{\bg_t}{\xi_t} } } &\leq \Ex{\norm{\bg_t} \norm{\xi_t}}\\
		&= \Ex{ \norm{\bg_t}^2 + \norm{\xi_t}^2 }\\
		&\leq G^2 + \Ex{ \Ex{ \norm{\xi_t} \vert \mathcal F_{t-1} } }\\
		&\leq G^2 + \sigma^2 < +\infty,
	\end{align*}
	where the second inequality is due to towering property of expectation. Then, the maritngale property:
	\begin{align*}
		\Ex{ -\ip{\bg_t}{\xi_t} \vert \mathcal F_{t-1} } &= -\ip{\bg_t}{\Ex{ \xi_t \vert \mathcal F_{t-1} }} \\
		&= -\ip{\bg_t}{0} = 0
	\end{align*}
	Before applying Lemma~\ref{lem:freedman}, we need to verify that $\abs{-\ip{\bg_t}{\xi_t}}$ is bounded:
	\begin{align*}
		\abs{-\ip{\bg_t}{\xi_t}} = \abs{-\ip{\bg_t}{g_t - \bg_t}} = \abs{ \norm{\bg_t}^2 - \ip{\bg_t}{g_t} } \leq \norm{\bg_t}^2 + \abs{ - \ip{\bg_t}{g_t} } \leq \norm{\bg_t}^2 + \norm{\bg_t}\norm{g_t} \leq G^2 + G \tilde G,
	\end{align*}
	where we used $G$-Lipschitzness of $f$ and almost sure boundedness of stochastic gradients $g_t$. Now, we are able make the high probability statement. By Lemma~\ref{lem:freedman}, with probability $1 - 4\log(T)\delta$ for $\delta < 1/e$, we have
	\begin{align*}
		\sum_{t=1}^T - \ip{\bg_t}{\xi_t}
		&\leq 
		\max \left\{ 2 \sqrt{ \sum_{t=1}^T \Ex{ \ip{\bg_t}{\xi_t}^2 \vert x_t } }, 3 (G^2 + G \tilde G) \sqrt{ \log(1/\delta) } \right\} \sqrt{\log(1/\delta)}\\
		&\overset{(1)}\leq
		\sqrt{\log(1/\delta)} \left( 2  \sqrt{\sum_{t=1}^T \Ex{ \norm{\bg_t}^2\norm{\xi_t}^2 \vert x_t } } + 3 (G^2 + G \tilde G)\sqrt{\log(1/\delta)} \right)\\
		&\overset{(2)}\leq
		\sqrt{\log(1/\delta)} \left( 2 \sqrt{\sigma^2\sum_{t=1}^T \norm{\bg_t}^2 } + 3 (G^2 + G \tilde G)\sqrt{\log(1/\delta)} \right)\\
		&\leq
		 2 \sigma \sqrt{\log(1/\delta)} \sqrt{\sum_{t=1}^T \norm{\bg_t}^2 } + 3 (G^2 + G \tilde G){\log(1/\delta)}
	\end{align*}
	where we used Cauchy-Schwarz inequality for the inner product to obtain inequality (1) and bounded variance assumption to obtain (2).
\end{proof}

\begin{repproposition}{prop:function-bound}
	Let $x_t$ be generated by AdaGrad for $G_0 > 0$. With probability at least $1 - 4 \log(t) \delta$,
	\begin{align*}
		\Delta_{t+1} &\leq \Delta_1 + 2L \br{ 1 + \log \br{ G_0^2 + \tilde G^2 t } } + G_0^{-1} ( M_1 + \sigma^2 ) \log(1 / \delta) + M_2,
	\end{align*}
	where $M_1 = 3( G^2 + G \tilde G )$ and $M_2 = G_0^{-1}( 2G^2 + G \tilde G )$.
\end{repproposition}
\begin{proof}
	We will handle this bound in two cases. First, we show the bound for AdaGrad, and then for the remaining two algorithms. Indeed, the bounds for the two cases differ by a factor of constants., hence we will use the larger bound for all three algorithms.
\paragraph{Case 1}\textbf{(AdaGrad)}\\
Let $\bar g_t = \nabla f(x_t)$ and $g_t = \nabla f(x_t;z_t)$, such that $\bar g_t = g_t + \xi_t$. Then, by smoothness
\begin{align*}
	f(x_{t+1}) - f(x_t) &\leq -\eta_t \ip{\bar g_t}{g_t} + \frac{L \eta_t^2}{2} \norm{g_t}^2\\
	&= -\eta_t \norm{\bar g_t}^2 - \eta_t \ip{\bar g_t}{\xi_t} + \frac{L \eta_t^2}{2} \norm{g_t}^2\\
\end{align*}
Defining $x^* = \min_{x \in \mathbb R^d} f(x)$ as the global minimizer of $f$ and summing over $t \in [T]$,
\begin{align} \label{eq:function-adagrad-base}
	f(x_{T+1}) - f(x^*) \leq f(x_1) - f(x^*) + \underbrace{ \sum_{t=1}^{T} - \eta_t \norm{\bar g_t}^2}_{\textrm{(A)}} + \underbrace{ \frac{L}{2} \sum_{t=1}^{T} \eta_t^2 \norm{g_t}^2}_{\textrm{(B)}} + \underbrace{ \sum_{t=1}^{T} - \eta_t \ip{\bar g_t}{\xi_t} }_{\textrm{(C)}}
\end{align}

\paragraph{Term (A)}
At this point, we will keep this term as it will be coupled with the sum-of-conditional-variances term which will be obtained through martingale concentration.

\paragraph{Term (B)}
\begin{align*}
	\frac{L}{2} \sum_{t=1}^{T} \eta_t^2 \norm{g_t}^2 &= \frac{L}{2} \sum_{t=1}^{T} \frac{\norm{g_t}^2}{G_0^2 + \sum_{i=1}^{t} \norm{g_i}^2 \tag{Lemma~\ref{lem:technical-log}}}\\
	&\leq \frac{L}{2} \br{ 1 + \log \br{ \max \{ 1,G_0^2 \} + \sum_{t=1}^{T} \norm{g_t}^2 } } \tag{Bounded gradients}\\
	&\leq \frac{L}{2} \br{ 1 + \log \br{ \max \{ 1,G_0^2 \} + \tilde G^2 T } }\\
\end{align*}

\paragraph{Bounding term (C)}
\begin{align*}
	\sum_{t=1}^{T} - \eta_t \ip{\bar g_t}{\xi_t} &\leq \underbrace{ \sum_{t=1}^{T} - \eta_{t-1} \ip{\bar g_t}{\xi_t} }_{\textrm{(C.1)}} + \underbrace{ \sum_{t=1}^{T} (\eta_{t-1} - \eta_t) \ip{\bar g_t}{\xi_t} }_{\textrm{(C.2)}}
\end{align*}
We will make use of Lemma~\ref{lem:freedman} to achieve high probability bounds on term (C.1). To do so, we need to prove that $X_t = - \eta_{t-1} \ip{\bar g_t}{\xi_t} $ is a martingale difference sequence and validate some of its properties:
\begin{enumerate}
	\item $- \eta_{t-1} \ip{\bar g_t}{\xi_t}$ is absolutely integrable: 
	\begin{align*}
		\mathbb E [ \abs{ - \eta_{t-1} \ip{\bar g_t}{\xi_t} } ] &\leq G_0^{-1}\mathbb E [ \abs{\ip{\bar g_t}{\xi_t}} ]\\
		&\leq G_0^{-1} \mathbb E [ \norm{\bar g_t}^2 + \norm{\xi_t}^2 ]\\
		&\leq G_0^{-1} ( G^2 + \sigma^2 ) < +\infty
	\end{align*}
	
	\item $- \eta_{t-1} \ip{\bar g_t}{\xi_t}$ is adapted to its natural filtration $\mathcal F_t = \sigma( \xi_1, ..., \xi_t )$
	
	\item It satisfies the martingale (difference) property:
	\begin{align*}
		\Ex{ - \eta_{t-1} \ip{\bar g_{t}}{\xi_{t}} \mid \mathcal F_{t-1} }  = -\eta_{t-1} \ip{\bar g_t}{ \Ex{ \xi_t \mid \mathcal F_{t-1}} } = 0
	\end{align*}
	
	\item $X_t = - \eta_{t-1} \ip{\bar g_t}{\xi_t}$ is bounded:
	\begin{align*}
		- \eta_{t-1} \ip{\bar g_t}{\xi_t} \leq G_0^{-1} \abs{\ip{\bar g_t}{\xi_t}} \leq G_0^{-1} ( \norm{\bar g_t}^2 + \norm{\bar g_t}\norm{g_t} ) \leq G_0^{-1} ( G^2 + G \tilde G )
	\end{align*}

	\item Conditional variance of $X_t = - \eta_{t-1} \ip{\bar g_t}{\xi_t}$:
	\begin{align*}
		\var_{t-1}(X_t) &= \Ex{ ( \eta_{t-1} \ip{\bar g_{t}}{\xi_{t}})^2 \mid \mathcal F_{t-1} }\\
		&\leq G_0^{-2} \Ex{ (\ip{\bar g_{t}}{\xi_{t}})^2 \mid \mathcal F_{t-1} }\\
		&\leq G_0^{-2} \norm{\bar g_{t}}^2 \Ex{ \norm{\xi_{t}}^2 \mid \mathcal F_{t-1} }\\
		&\leq G_0^{-2} \sigma^2 \norm{\bar g_t}^2
	\end{align*}

\end{enumerate}

\paragraph{Term (C.1)} Now, we are at a position to apply Lemma~\ref{lem:freedman} on term (C.1). With probability $ 1 - 4 \log(T) \delta $,
\begin{align*}
	\sum_{t=1}^{T} - \eta_{t-1} \ip{\bar g_t}{\xi_t} &\leq \max \left\{ 2 \sqrt{\sum_{t=1}^{T} \Ex{ ( \eta_{t-1} \ip{\bar g_{t}}{\xi_{t}})^2 \mid \mathcal F_{t-1} } }, 3 G_0^{-1} ( G^2 + G \tilde G ) \sqrt{ \log(1/\delta) } \right\}\sqrt{ \log(1/\delta) }\\
	&\leq \max \left\{ 2 \sqrt{\sum_{t=1}^{T} \sigma^2 \eta_{t-1}^2 \norm{\bar g_t}^2 }, 3 G_0^{-1} ( G^2 + G \tilde G ) \sqrt{ \log(1/\delta) } \right\}\sqrt{ \log(1/\delta) }\\
	&\leq \underbrace{ 2 \sigma \sqrt{ \log(1/\delta) } \sqrt{\sum_{t=1}^{T} \eta_{t-1}^2 \norm{\bar g_t}^2 } }_{\textrm{(D)}} +~3 G_0^{-1} ( G^2 + G \tilde G ) { \log(1/\delta) }\\
\end{align*}

\paragraph{Term (C.2): }
\begin{align*}
	\sum_{t=1}^{T} (\eta_{t-1} - \eta_t) \ip{\bar g_t}{\xi_t} &\leq \sum_{t=1}^{T} (\eta_{t-1} - \eta_t) \abs{ \ip{\bar g_t}{\xi_t} }\\
	&\leq ( G^2 + G \tilde G ) \sum_{t=1}^{T} ( \eta_{t-1} - \eta_t )\\
	&\leq ( G^2 + G \tilde G ) \eta_0 
\end{align*}

\paragraph{Terms (A) + (D): }
All the underbraced term but expression (D) either grows as $ O \left(\log(T) \right)$, or is upper bounded by a constant. The worst-case growth of term (D) is $O ( \sqrt{T} )$, which we will keep under control via term (A).
\begin{align*}
	\text{(A) + (D)} &\leq 2 \sigma \sqrt{ \log(1/\delta) } \sqrt{\sum_{t=1}^{T} \eta_{t-1}^2 \norm{\bar g_t}^2 } - \sum_{t=1}^{T} \eta_t \norm{\bar g_t}^2\\
	&\leq 2 \sigma \sqrt{ \log(1/\delta) } \sqrt{\sum_{t=1}^{T} \eta_{t-1}^2 \norm{\bar g_t}^2 } - G_0 \sum_{t=1}^{T} \eta_t^2 \norm{\bar g_t}^2\\
	&\leq 2 \sigma \sqrt{ \log(1/\delta) } \sqrt{\sum_{t=1}^{T} \eta_{t-1}^2 \norm{\bar g_t}^2 } - G_0 \sum_{t=1}^{T} \eta_{t-1}^2 \norm{\bar g_t}^2 + G_0 \sum_{t=1}^{T} ( \eta_{t-1}^2 - \eta_t^2) \norm{\bar g_t}^2 \\
	&\leq 2 \sigma \sqrt{ \log(1/\delta) } \sqrt{\sum_{t=1}^{T} \eta_{t-1}^2 \norm{\bar g_t}^2 } - G_0 \sum_{t=1}^{T} \eta_{t-1}^2 \norm{\bar g_t}^2 + G_0 G^2 \eta_0^2\\
\end{align*}
In order to characterize the growth of this expression, let us define $f(x) = 2 \sigma \sqrt{ \log(1/\delta) } \sqrt{x} - G_0 x$, which is a concave function as its second derivative is non-positive. Now, looking at derivative of $f$,
\begin{align*}
	\frac{d}{dx} f(x) = \frac{\sigma \sqrt{ \log(1 / \delta) } }{\sqrt{x}} - G_0,
\end{align*}
which is 0 at $x = G_0^{-2} \sigma^2 \log(1 / \delta)$. This is indeed the point at which the function attains its maximum. For the final step of the proof, we define $Z_T = \sum_{t=1}^{T} \eta_{t-1}^2 \norm{\bar g_t}^2$. Then, 
\begin{align*}
	\text{(A) + (D)} &\leq f(Z_T) + G_0 G^2 \eta_0^2\\
	&\leq f(G_0^{-2} \sigma^2 \log(1 / \delta)) + G_0 G^2 \eta_0^2\\
	&= G_0^{-1} \sigma^2 \log(1 / \delta) + G_0 G^2 \eta_0^2\\
\end{align*}

\paragraph{Final bound}
Plugging all the expression together and setting $\eta_0 = \eta_1$, with probability at least $1 - 4 \log(T) \delta$,
\begin{align*}
	f(x_{T+1}) - f(x^*) &\leq f(x_1) - f(x^*) + \frac{L}{2} \br{ 1 + \log \br{ \max \{ 1, G_0^2 \} + \tilde G^2 T } }\\
	&\quad+ G_0^{-1} ( 3( G^2 + G \tilde G ) + \sigma^2 ) \log(1 / \delta)\\
	&\quad+ G_0^{-1}( 2G^2 + G \tilde G )\\
\end{align*}
Since this result holds for any T, to make it consistent with the statement of the proposition, we re-state the bound with $t$,
\begin{align*}
	f(x_{t+1}) - f(x^*) &\leq f(x_1) - f(x^*) + \frac{L}{2} \br{ 1 + \log \br{ \max \{ 1, G_0^2 \} + \tilde G^2 t } }\\
	&\quad+ G_0^{-1} ( 3( G^2 + G \tilde G ) + \sigma^2 ) \log(1 / \delta)\\
	&\quad+ G_0^{-1}( 2G^2 + G \tilde G )\\
\end{align*}

\paragraph{Case 2}\textbf{(AdaGrad w/ Averaging \& RSAG)}
Let us define stochastic gradient at $\bar x_t$ as $g_t = \nabla f(\bar x_t; z_t) = \bg_t + \xi_t$ where $\bar g_t = \nabla f(\bar x_t)$. Then, by smoothness,
\begin{align*}
	f(x_{t+1}) - f(x_t) &\leq \ip{\nabla f(x_t)}{x_{t+1} - x_t} + \frac{L}{2} \norm{x_{t+1} - x_t}^2 \\
	&= - \eta_t \ip{\bar g_t}{g_t} -\eta_t \ip{\nabla f(x_t) - \bar g_t}{g_t} + \frac{L \eta_t^2}{2} \norm{g_t}^2 \tag{Cauchy-Schwarz}\\
	&= -\eta_t \norm{\nabla f(x_t) - \eta_t \ip{\bar g_t}{\xi_t} + \nabla f (\bar x_t)}\norm{g_t} + \frac{L \eta_t^2}{2} \norm{g_t}^2 \tag{Smoothness}\\
	&= - \eta_t \norm{\bar g_t}^2 - \eta_t \ip{\bar g_t}{\xi_t} + L \eta_t \norm{\bar x_t - x_t}\norm{g_t} + \frac{L \eta_t^2}{2} \norm{g_t}^2 \tag{Young's ineq.}\\
	&= - \eta_t \norm{\bar g_t}^2 - \eta_t \ip{\bar g_t}{\xi_t} + \frac{L}{2} \norm{\bar x_t - x_t}^2 + L \eta_t^2 \norm{g_t}^2\\
\end{align*}
Using recursive expansion of $\norm{\bar x_t - x_t}^2$ and summing over $t \in [T]$,
\begin{align*}
	\Delta_{T+1} &\leq \Delta_1 + \frac{L}{2} \sum_{t=1}^{T} \bs{ (1 - \alpha_t) \Gamma_{t} \sum_{k=1}^{t} \frac{\alpha_k}{\Gamma_k} \frac{(\eta_k - \gamma_k)^2}{\alpha_k^2} \norm{ g_t }^2 } + \sum_{t=1}^{T} L \eta_t^2 \norm{ g_t }^2 - \eta_t \norm{\bar g_t}^2 - \eta_t \ip{\bar g_t}{\xi_t}\\
	&\leq \Delta_1 + \frac{L}{2} \sum_{t=1}^{T} \bs{ \sum_{k=t}^{T} (1 - \alpha_k) \Gamma_{k} } \frac{\alpha_t}{\Gamma_t} \frac{(\eta_k - \gamma_k)^2}{\alpha_k^2} \norm{ g_t }^2  + \sum_{t=1}^{T} L \eta_t^2 \norm{ g_t }^2 - \eta_t \norm{\bar g_t}^2 - \eta_t \ip{\bar g_t}{\xi_t}\\
\end{align*}
First, we plug in $\alpha_t = 2 / (t+1)$ and invoke Proposition~\ref{prop:term(*)}. Recognizing that $\abs{ \gamma_t - \eta_t } = \alpha_t \eta_t$ for both RSAG and AdaGrad with averaging,
\begin{align*}
	\Delta_{T+1} &\leq \Delta_1 + \underbrace{ \sum_{t=1}^{T} - \eta_t \norm{\bar g_t}^2 }_{\textrm{(A)}} + \underbrace{ 2L \sum_{t=1}^{T} \eta_t^2 \norm{ g_t }^2 }_{\textrm{(B)}}  + \underbrace{ \sum_{t=1}^{T} - \eta_t \ip{\bar g_t}{\xi_t} }_{\textrm{(C)}}\\
\end{align*}
Observe that this expression is the same as Eq.~\eqref{eq:function-adagrad-base} up to replacing $\frac{L}{2}$ in term (B) of AdaGrad with $2L$. Hence, the same bounds hold up to incorporating the aforementioned change. With probability $1 - 4 \log(T) \delta$,
\begin{align*}
	f(x_{T+1}) - f(x^*) &\leq f(x_1) - f(x^*) + 2L \br{ 1 + \log \br{ \max \{ 1, G_0^2 \} + \tilde G^2 T } }\\
	&\quad+ G_0^{-1} ( 3( G^2 + G \tilde G ) + \sigma^2 ) \log(1 / \delta)\\
	&\quad+ G_0^{-1}( 2G^2 + G \tilde G )\\
\end{align*}
Similarly, since this holds for any $T$, we re-state the results with $t$ for consistency,
\begin{align*}
	f(x_{t+1}) - f(x^*) &\leq f(x_1) - f(x^*) + 2L \br{ 1 + \log \br{ \max \{ 1, G_0^2 \} + \tilde G^2 t } }\\
	&\quad+ G_0^{-1} ( 3( G^2 + G \tilde G ) + \sigma^2 ) \log(1 / \delta)\\
	&\quad+ G_0^{-1}( 2G^2 + G \tilde G )\\
\end{align*}
\end{proof}

\subsection{Proofs in Section~\ref{sec:generalization}}
\begin{repproposition}{prop:main-bound}
    Let $x_t$ be generated by Algorithm~\ref{alg:template} where $g_t = \nabla f(\bar x_t;z_t) = \bg_t + \xi_t$ and $\bg_t = \nabla f(\bar x_t)$. Then, it holds that
    \begin{align*}
        \sum_{t=1}^{T} \norm{\bg_t}^2 \leq \frac{\Dmax + 2L}{\eta_T} +\frac{L}{2 \eta_T} \sum_{t=1}^{T} \underbrace{ \bs{ \sum_{k=t}^{T} (1 - \alpha_k) {\Gamma_{k}} } \frac{\alpha_t}{\Gamma_t}}_{\textrm{($\ast$)}} \frac{(\eta_t - \gamma_t)^2}{\alpha_t^2} \norm{ g_t }^2 + \underbrace{ \sum_{t=1}^{T} - \ip{\bg_t}{\xi_t} }_{\textrm{($\ast\ast$)}}.
    \end{align*}
\end{repproposition}
\begin{proof}
	This result is due to \citet{ghadimi2016accelerated, lan2020first} up to introducing adaptive step-sizes. We follow their derivations in the deterministic setting and incorporate it with our high probability analysis. Then,
	\begin{align*}
		f(x_{t+1}) - f(x_t) &\leq \ip{\nabla f(x_t)}{x_{t+1} - x_t} + \frac{L}{2} \norm{x_{t+1} - x_t}^2 \\
		&\leq  - \eta_t \ip{\nabla f(x_t)}{\nabla f(\bar x_t) + \xi_t} + \frac{L\eta_t^2}{2} \norm{g_t}^2 \\
		&=  - \eta_t \norm{\bg_t}^2 - \eta_t \ip{\nabla f(\bar x_t)}{\xi_t} - \eta_t \ip{\nabla f(x_t) - \nabla f(\bar x_t)}{g_t} + \frac{L \eta_t^2}{2} \norm{g_t}^2 \\
		&\leq  - \eta_t \norm{\bg_t}^2 - \eta_t \ip{\bg_t}{\xi_t} + \frac{L}{2} \norm{\bar x_t - x_t}^2 + {L \eta_t^2} \norm{g_t}^2
	\end{align*}
	where we used descent lemma (Eq.~\eqref{eq:descent-lemma}) in the first inequality, and update rule for $x_{t+1}$ in Algorithm~\ref{alg:template}, line~\ref{line:template-md} in the second inequality. For the last line, we use Cauchy-Schwarz, apply smoothnness definition in Eq.~\eqref{eq:L-smooth} and finally use Young's inequality. Let us define $\Delta_t = f(x_t) - \min_{x \in \mathbb R^d} f(x)$ and $\Dmax = \max_{t \in [T]} \Delta_t $. Dividing both sides by $\eta_t$, rearranging, and summing over $t = 1, ..., T$ we obtain,
	\begin{align*}
		\sum_{t=1}^{T} \norm{\bar g_t}^2 &\leq \sum_{t=1}^{T} \frac{1}{\eta_t} \br{ \Delta_t - \Delta_{t+1} } + \frac{L}{2} \sum_{t=1}^{T} \frac{1}{\eta_t} \norm{\bar x_t - x_t}^2 + L\sum_{t=1}^{T} \eta_t \norm{g_t}^2 + \sum_{t=1}^{T} - \ip{\bg_t}{\xi_t}
	\end{align*}
	Now, we express the term ${\bar x_t - x_t}$ recursively, as a function of gradient norms.
	\begin{align*}
		\bar x_t - x_t &= (1 - \alpha_t) \bs{ \tilde x_{t} - x_t }\\
		&= (1 - \alpha_t) \bs{ \bar x_{t-1} - x_{t-1} + (\eta_{t-1} - \gamma_{t-1}) g_{t-1} } \\
		&= (1 - \alpha_t) \bs{ (1 - \alpha_{t-1}) (\tilde x_{t-1} - x_{t-1}) + (\eta_{t-1} - \gamma_{t-1}) g_{t-1} } \\
		&= (1 - \alpha_t) \sum_{k=1}^{t-1} \br{ \prod_{j=k+1}^{t-1} (1 - \alpha_j)} (\eta_k - \gamma_k) g_{k} \\
		&= (1 - \alpha_t) \sum_{k=1}^{t-1} \frac{\Gamma_{t-1}}{\Gamma_k} (\eta_k - \gamma_k) g_{k} \\
		&= (1 - \alpha_t) \Gamma_{t-1} \sum_{k=1}^{t-1} \frac{\alpha_k}{\Gamma_k} \frac{(\eta_k - \gamma_k)}{\alpha_k} g_{k},
	\end{align*}
	Hence, by convexity of squared norm and (absolute) homogeneity of vector norms,
	\begin{equation*}
	\begin{aligned}
		\norm{\bar x_t - x_t}^2 &= \norm{ (1 - \alpha_t) \Gamma_{t-1} \sum_{k=1}^{t-1} \frac{\alpha_k}{\Gamma_k} \frac{(\eta_k - \gamma_k)}{\alpha_k} g_k }^2\\
		&\leq (1 - \alpha_t)^2 \Gamma_{t-1} \sum_{k=1}^{t-1} \frac{\alpha_k}{\Gamma_k} \frac{(\eta_k - \gamma_k)^2}{\alpha_k^2} \norm{ g_k }^2\\
		&\leq (1 - \alpha_t) \Gamma_{t} \sum_{k=1}^{t} \frac{\alpha_k}{\Gamma_k} \frac{(\eta_k - \gamma_k)^2}{\alpha_k^2} \norm{ g_k }^2\\
	\end{aligned}
	\end{equation*}
	Finally, we plug this in the original expression,
	\begin{align*}
		\sum_{t=1}^{T} \norm{\bar g_t}^2 &\leq \sum_{t=1}^{T} \frac{1}{\eta_t} \br{ \Delta_t - \Delta_{t+1} } + \frac{L}{2} \sum_{t=1}^{T} \bs{ (1 - \alpha_t) \frac{\Gamma_{t}}{\eta_t} \sum_{k=1}^{t} \frac{\alpha_k}{\Gamma_k} \frac{(\eta_k - \gamma_k)^2}{\alpha_k^2} \norm{ g_k }^2 }\\
		&\quad+ L \sum_{t=1}^{T} \eta_t \norm{g_t}^2 + \sum_{t=1}^{T} - \ip{\bg_t}{\xi_t} \\
		&\leq \frac{\Delta_1}{\eta_1} + \sum_{t=1}^{T-1}(\frac{1}{\eta_{t+1}}-\frac{1}{\eta_{t}})\Delta_{t+1} + \frac{L}{2} \sum_{t=1}^{T} \bs{ \sum_{k=t}^{T} (1 - \alpha_k) \frac{\Gamma_{k}}{\eta_k} } \frac{\alpha_t}{\Gamma_t} \frac{(\eta_t - \gamma_t)^2}{\alpha_t^2} \norm{g_t}^2\\
		&\quad+ L \sum_{t=1}^{T} \frac{\norm{g_t}^2}{ \sqrt{ G_0^2  + \sum_{k=1}^{t} \norm{g_k}^2 } } + \sum_{t=1}^{T} - \ip{\bg_t}{\xi_t} \\
		&\leq \frac{\Dmax}{\eta_1} + \Dmax \sum_{t=1}^{T-1}(\frac{1}{\eta_{t+1}} - \frac{1}{\eta_{t}})  + \frac{L}{2} \sum_{t=1}^{T} \bs{ \sum_{k=t}^{T} (1 - \alpha_k) \frac{\Gamma_{k}}{\eta_k} } \frac{\alpha_t}{\Gamma_t} \frac{(\eta_t - \gamma_t)^2}{\alpha_t^2} \norm{g_t}^2\\
		&\quad+ 2L \sqrt{ G_0^2 + \sum_{t=1}^{T} \norm{g_t}^2} + \sum_{t=1}^{T} - \ip{\bg_t}{\xi_t} \\
		&\leq \frac{\Dmax + 2L}{\eta_T} +\frac{L}{2 \eta_T} \sum_{t=1}^{T} \underbrace{ \bs{ \sum_{k=t}^{T} (1 - \alpha_k) {\Gamma_{k}} } \frac{\alpha_t}{\Gamma_t}}_{\textrm{($\ast$)}} \frac{(\eta_t - \gamma_t)^2}{\alpha_t^2} \norm{ g_t }^2 + \underbrace{ \sum_{t=1}^{T} - \ip{\bg_t}{\xi_t} }_{\textrm{($\ast\ast$)}}.
	\end{align*}
	We rearranged the summations to obtain the second inequality, used the assumption that $\Delta_t \leq \Dmax$ for any $t$ together with Lemma~\ref{lem:technical-sqrt}, and we telescope the first summation on the right hand side to obtain the result.
	
\end{proof}

Next, we provide the proof for term ($*$) in Proposition~\ref{prop:main-bound}.
\begin{repproposition}{prop:term(*)} We have
    \begin{align*}
        \bs{ \sum_{k=t}^{T} (1 - \alpha_k) {\Gamma_{k}} } \frac{\alpha_t}{\Gamma_t} \leq
        \begin{cases} 
            2 & \text{if}~~\alpha_t = \frac{2}{t+1}; \\[2mm]
            \log(T+1) & \text{if}~~\alpha_t = \frac{1}{t}.
        \end{cases}
    \end{align*}
\end{repproposition}
\begin{proof}
	First, we begin with the weighted averaging setting, i.e., $\alpha_t = 2 / (t+1)$.
	Using the recursive definition of $\Gamma$, one could easily show that for any $\alpha_t \in (0, 1)$,
\begin{align*}
	\sum_{k=1}^{t} \frac{\alpha_k}{\Gamma_k} = \frac{1}{\Gamma_t}~~~~~\implies~~~~~\Gamma_t \sum_{k=1}^t \frac{\alpha_k}{\Gamma_k} = 1.
\end{align*}
	Defining $A_t = \sum_{i=1}^{t} i = \frac{t (t+1)}{2}$ and $A_0 = 1$, we have that $\alpha_t = \frac{t}{A_t}$ and
	\begin{align*}
		\Gamma_t = \prod_{i=1}^{t} (1 - \alpha_i) = \prod_{i=1}^{t} (1 - \frac{i}{A_i}) = \prod_{i=1}^{t} \frac{A_{i-1}}{A_i} = \frac{1}{A_t} \\
	\end{align*}
	Hence, we can express term ($\ast$) as
	\begin{align*}
		 \bs{ \sum_{k=t}^{T} (1 - \alpha_k) {\Gamma_{k}} } \frac{\alpha_t}{\Gamma_t} &\leq \bs{ \sum_{k=t}^{T} \frac{1}{A_k} } t \\
		 &= \bs{ 2 \sum_{k=t}^{T} \frac{1}{k (k+1)} } t \\
		 &= \bs{ 2 \sum_{k=t}^{T} \frac{1}{k} - \frac{1}{k+1} } t \\
		 &= 2 \br{ \frac{1}{t} - \frac{1}{T+1} } t \\
		 &\leq 2
	\end{align*}
	
	For the uniform averaging setting with $\alpha_t = \frac{1}{t}$, for $t > 1$,
	\begin{align*}
		\Gamma_t &= \prod_{i=1}^{t} (1 - \alpha_i) = \prod_{i=2}^{t} \frac{k-1}{k} = \frac{1}{k}\\
	\end{align*}
	Hence, again for $t > 1$,
	\begin{align*}
		\bs{ \sum_{k=t}^{T} (1 - \alpha_k) {\Gamma_{k}} } \frac{\alpha_t}{\Gamma_t} &= \sum_{k=t}^{T} \frac{1}{k} \leq \sum_{k=2}^{T} \frac{1}{k} \leq \log(T+1),
	\end{align*}
	where the last inequality is due to that fact that integral of $f(x) = 1 / x$ over the range $[1, k]$ upper bounds the summation above. 
	
\end{proof}

Finally, we conclude with the high probability convergence theorem for AdaGrad with averaging and RSAG,
\begin{reptheorem}{thm:high-prob-rsag}
Let $x_t$ be the sequence of iterates generated by AdaGrad with averaging or adaptive RSAG. Under Assumptions~\ref{eq:bounded-gradient},~\ref{eq:bounded-variance}, ~\ref{eq:as-bounded}, for $\Dmax \leq O \br{\Delta_1 + L\log(T) + \sigma^2 \log(1/\delta)}$, with probability $1 - 8 \log(T) \delta$,
    \begin{align*}
		\frac{1}{T} \sum_{t=1}^{T} \norm{\bg_t}^2 &\leq \frac{G_0 \br{\Dmax + 3L + L \log \br{ \max \{ 1, G_0^{-2} \} + \tilde G^2 T } } + 3 (G^2 + G \tilde G){\log(1/\delta)})}{T}\\
		&\quad+ \frac{ \tilde G \br{\Dmax + 3L + L \log \br{ \max \{ 1, G_0^{-2} \} + \tilde G^2 T } } + 2 G \sigma \sqrt{\log(1/\delta)} }{\sqrt{T}} \\
	\end{align*}
\end{reptheorem}
\begin{proof}
	Again by Proposition~\ref{prop:main-bound},
	\begin{align*}
		\sum_{t=1}^{T} \norm{\bg_t}^2 \leq \frac{\Dmax + 2L}{\eta_T} +\frac{L}{2 \eta_T} \sum_{t=1}^{T} \underbrace{ \bs{ \sum_{k=t}^{T} (1 - \alpha_k) {\Gamma_{k}} } \frac{\alpha_t}{\Gamma_t}}_{\textrm{($\ast$)}} \frac{(\eta_t - \gamma_t)^2}{\alpha_t^2} \norm{ g_t }^2 + \underbrace{ \sum_{t=1}^{T} - \ip{\bg_t}{\xi_t} }_{\textrm{($\ast\ast$)}}.
	\end{align*}
	Both for AdaGrad with averaging and adaptive RSAG, we use weighted averaging. Moreover, due to the particular step-size choices, $\eta_t - \gamma_t = \alpha_t \eta_t$ for both methods. Combining this observation with the previous expression we get
	\begin{align*}
		\sum_{t=1}^{T} \norm{\bg_t}^2 \leq \frac{\Dmax + 2L}{\eta_T} +\frac{L}{2 \eta_T} \sum_{t=1}^{T} \underbrace{ \bs{ \sum_{k=t}^{T} (1 - \alpha_k) {\Gamma_{k}} } \frac{\alpha_t}{\Gamma_t}}_{\textrm{($\ast$)}} \eta_t^2 \norm{ g_t }^2 + \underbrace{ \sum_{t=1}^{T} - \ip{\bg_t}{\xi_t} }_{\textrm{($\ast\ast$)}}.
	\end{align*}
	We introduce the bounds in Proposition~\ref{prop:term(*)} and Proposition~\ref{prop:term(**)} for the respective marked term,
	\begin{align*}
		\sum_{t=1}^{T} \norm{\bg_t}^2 &\leq \frac{\Dmax + 2L + L \sum_{t=1}^{T} \eta_t^2 \norm{ g_t }^2}{\eta_T} + 2 \sigma \sqrt{\log(1/\delta)} \sqrt{\sum_{t=1}^T \norm{\bg_t}^2 } + 3 (G^2 + G \tilde G){\log(1/\delta)} \\
		&\leq \frac{\Dmax + 3L + L \log \br{ \max \{ 1, G_0^{-2} \} + \sum_{t=1}^{T} \norm{ g_t }^2 } }{\eta_T}\\
		&\quad+ 2 G \sigma \sqrt{\log(1/\delta)} \sqrt{ T } + 3 (G^2 + G \tilde G){\log(1/\delta)} \\
		&\leq \br{\Dmax + 3L + L \log \br{ \max \{ 1, G_0^{-2} \} + \sum_{t=1}^{T} \norm{ g_t }^2 } } \sqrt{G_0^2 + \sum_{t=1}^{T} \norm{g_t}^2 }\\
		&\quad+ 2 G \sigma \sqrt{\log(1/\delta)} \sqrt{ T } + 3 (G^2 + G \tilde G){\log(1/\delta)} \\
		&\leq \br{ \tilde G \br{\Dmax + 3L + L \log \br{ \max \{ 1, G_0^{-2} \} + \tilde G^2 T } } + 2 G \sigma \sqrt{\log(1/\delta)} } \sqrt{T}\\
		&\quad+ G_0 \br{\Dmax + 3L + L \log \br{ \max \{ 1, G_0^{-2} \} + \tilde G^2 T } } + 3 (G^2 + G \tilde G){\log(1/\delta)} \\
	\end{align*}
	where we used Lemma~\ref{lem:technical-log} in the second inequality, while boundedness of $\bg_t$ and almost sure boundedness of $g_t$ in the last line. Dividing both sides by T, and using the same argument as in the proof of Theorem~\ref{thm:high-prob-adagrad}, with probability at least $1 - 8 \log(T) \delta$,
	\begin{align*}
		\frac{1}{T} \sum_{t=1}^{T} \norm{\bg_t}^2 &\leq \frac{G_0 \br{\Dmax + 3L + L \log \br{ \max \{ 1, G_0^{-2} \} + \tilde G^2 T } } + 3 (G^2 + G \tilde G){\log(1/\delta)})}{T}\\
		&\quad+ \frac{ \tilde G \br{\Dmax + 3L + L \log \br{ \max \{ 1, G_0^{-2} \} + \tilde G^2 T } } + 2 G \sigma \sqrt{\log(1/\delta)} }{\sqrt{T}} \\
	\end{align*}
	where
	\begin{align*}
		\Dmax &\leq \Delta_1 + 2L \br{ 1 + \log \br{ \max \{ 1, G_0^2 \} + \tilde G^2 T } } + G_0^{-1} ( 3( G^2 + G \tilde G ) + \sigma^2 ) \log(1 / \delta) + G_0^{-1}( 2G^2 + G \tilde G )\\
	\end{align*}

\end{proof}

\section{Noise adaptation under sub-Gaussian noise}
In this section of the appendix, we present the proof of Theorems~\ref{thm:high-prob-objective} and~\ref{thm:high-prob-convergence} along with the Lemmas that we will require in the proofs.
We first prove a bound on $\Dmax$ and then show noise-adaptive rates for AdaGrad.

\subsection{High Probability Bounds on $\Dmax$}
We will argue about high probability bounds on objective sub-optimality under sub-Gaussian assumption.
First, we will present Lemma~1 from \citet{li2020high}, as well as our modified version of it that we use in our derivations.
\begin{lemma}[Lemma 1 from \citet{li2020high}] \label{eq:liorabona}
	Let $Z_1, \cdots, Z_T$ be a martingale difference sequence (MDS) with respect to random vectors $\xi_1, \cdots, \xi_T$ and $Y_t$ be a sequence of random variables which is $\sigma(\xi_1, \cdots, \xi_{t-1})$-measurable. Given that $\Ex{ \exp(Z_t^2 / Y_t^2) \mid \xi_1, \cdots \xi_{t-1} } \leq \exp(1)$, for any $\lambda > 0$ and $\delta \in (0,1)$ with probability at least $1 - \delta$,
	\begin{align*}
		\sum_{t=1}^{T} Z_t \leq \frac{3}{4} \lambda \sum_{t=1}^{T} Y_t^2 + \frac{1}{\lambda} \log(1 / \delta)
	\end{align*}
\end{lemma}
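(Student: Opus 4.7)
The plan is to prove this via the standard exponential supermartingale argument, reducing the sub-Gaussian tail condition on $Z_t/Y_t$ to an MGF bound for $Z_t$, then exponentiating and appealing to Markov's inequality. Throughout, let $\Fcal_{t-1} = \sigma(\xi_1,\ldots,\xi_{t-1})$, so that $Y_t$ is $\Fcal_{t-1}$-measurable and $\Ex{Z_t \mid \Fcal_{t-1}} = 0$.

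The first (and most technical) step is to upgrade the $\psi_2$-type hypothesis $\Ex{\exp(Z_t^2/Y_t^2)\mid\Fcal_{t-1}} \le e$ into a quadratic MGF bound of the form
\[
\Ex{\exp(\lambda Z_t)\mid\Fcal_{t-1}} \le \exp\!\br{\tfrac{3}{4}\lambda^2 Y_t^2},\qquad \forall\lambda\in\reals.
\]
To do this I would combine the pointwise inequality $\lambda z \le \tfrac{a \lambda^2 Y_t^2}{2} + \tfrac{z^2}{2a Y_t^2}$ (Young's inequality) with the mean-zero property of $Z_t$, tuning the parameter $a$ so that the quadratic term lands on $\tfrac34$. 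Taking conditional expectations and using $\Ex{\exp(Z_t^2/Y_t^2)\mid\Fcal_{t-1}}\le e$ controls the nonlinear piece, while centering lets one kill the lower-order linear correction by a standard ``symmetrization + Jensen'' trick. This is where I expect most of the arithmetic to live, and where the exact constant $3/4$ in the statement is pinned down; the rest of the argument is structural and does not see the constant.

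Given that MGF bound, define the process
\[
M_t \;=\; \exp\!\br{\lambda \sum_{s=1}^{t} Z_s \;-\; \tfrac{3}{4}\lambda^2 \sum_{s=1}^{t} Y_s^2},\qquad M_0=1.
\]
I would verify that $\{M_t\}$ is a non-negative $\{\Fcal_t\}$-supermartingale: writing $M_t = M_{t-1}\cdot \exp(\lambda Z_t)\cdot \exp(-\tfrac{3}{4}\lambda^2 Y_t^2)$, the second factor is $\Fcal_{t-1}$-measurable and can be pulled out of the conditional expectation, and the first factor is dominated by $\exp(\tfrac{3}{4}\lambda^2 Y_t^2)$ via step~1. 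Hence $\Ex{M_t\mid\Fcal_{t-1}} \le M_{t-1}$, and in particular $\Ex{M_T}\le 1$.

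Finally, Markov's inequality gives $\Prob{M_T \ge 1/\delta} \le \delta$, so with probability at least $1-\delta$ we have $\log M_T < \log(1/\delta)$, i.e.
\[
\lambda \sum_{t=1}^{T} Z_t \;-\; \tfrac{3}{4}\lambda^2 \sum_{t=1}^{T} Y_t^2 \;\le\; \log(1/\delta).
\]
Dividing by $\lambda>0$ yields the stated bound. The conceptual content is entirely in step~1; steps~2 and~3 are the routine Ville/Howard-style supermartingale machinery, and no additional assumption beyond the MDS structure and $\Fcal_{t-1}$-measurability of $Y_t$ is needed.
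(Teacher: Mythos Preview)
The paper does not prove this lemma itself; it is quoted from \citet{li2020high} and invoked as a black box, so there is no in-paper argument to compare against. Your overall scaffolding (steps~2 and~3: exponential supermartingale followed by Markov) is correct and is precisely the route taken in the cited source.

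The gap is in step~1. Carrying out your Young split with $a=3/2$ gives $\lambda Z_t \le \tfrac{3}{4}\lambda^2 Y_t^2 + \tfrac{1}{3}Z_t^2/Y_t^2$, whence $\Ex{\exp(\lambda Z_t)\mid\Fcal_{t-1}} \le e^{1/3}\exp\!\br{\tfrac{3}{4}\lambda^2 Y_t^2}$ after Jensen. That residual $e^{1/3}$ is not a ``linear correction'' that centering can absorb; it compounds to $e^{T/3}$ across the product and destroys the bound. No symmetrization repairs this, because the extra factor appears \emph{after} exponentiation and expectation. The actual argument in \citet{li2020high} splits on the magnitude of $\lambda Y_t$. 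When $|\lambda Y_t|$ is large, Young with the optimal parameter already yields the clean MGF bound $\Ex{\exp(\lambda Z_t)\mid\Fcal_{t-1}}\le\exp(\tfrac34\lambda^2Y_t^2)$ with no leftover constant, and centering is not used. When $|\lambda Y_t|$ is small, one instead applies the elementary pointwise inequality $e^x \le x + e^{9x^2/16}$: the linear term vanishes by $\Ex{Z_t\mid\Fcal_{t-1}}=0$, and the remaining piece $\Ex{\exp(\tfrac{9}{16}\lambda^2 Y_t^2\cdot Z_t^2/Y_t^2)\mid\Fcal_{t-1}}$ is at most $e^{\tfrac{9}{16}\lambda^2 Y_t^2}\le e^{\tfrac34\lambda^2 Y_t^2}$ by Jensen on the concave map $u\mapsto u^\alpha$ with $\alpha=\tfrac{9}{16}\lambda^2 Y_t^2\le 1$. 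The two regimes together cover all $\lambda$; neither alone suffices, and that case distinction is the missing idea in your sketch.
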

Next, we present a slightly modified version of the above lemma. Its proof follows the same lines with Lemma~\ref{eq:liorabona} up to replacing $Y_t$ with a deterministic quantity, selecting a particular choice of $\lambda$ and dealing with the MDS $Z_t$ itself rather than its square, $Z_t^2$.
\begin{lemma} \label{eq:liorabona-modified}
	Let $Z_1, \cdots, Z_T$ be a martingale difference sequence (MDS) with respect to random vectors $\xi_1, \cdots, \xi_T$ and $\sigma^2 \in \mathbb R$ such that $\Ex{ \exp (Z_t / \sigma^2) \mid \xi_1, \cdots \xi_{t-1} } \leq 1$. Then, with probability as least $1 - \delta$,
	\begin{align*}
		\sum_{t=1}^{T} Z_t \leq \sigma^2 \log(1 / \delta)
	\end{align*}
\end{lemma}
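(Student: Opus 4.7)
My plan is to adapt the exponential supermartingale (Chernoff) argument from the proof of Lemma~\ref{eq:liorabona} to this simpler deterministic-$\sigma$ setting. The ``particular choice of $\lambda$'' alluded to in the remark preceding the statement is simply $\lambda = 1/\sigma^2$, which is already baked into the hypothesis, so no optimization over $\lambda$ is needed.

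The first step I would take is to set $\mathcal{F}_t = \sigma(\xi_1,\ldots,\xi_t)$, define $W_t := \exp(Z_t/\sigma^2)$, and form the running product $M_t := \prod_{s=1}^{t} W_s$ with $M_0 := 1$. Pulling the $\mathcal{F}_{t-1}$-measurable factor $M_{t-1}$ out of the conditional expectation and invoking the hypothesis $\Ex{W_t \mid \mathcal{F}_{t-1}} \leq 1$ yields
\[ \Ex{M_t \mid \mathcal{F}_{t-1}} = M_{t-1}\cdot\Ex{W_t \mid \mathcal{F}_{t-1}} \leq M_{t-1}, \]
so $M_t$ is a nonnegative supermartingale and iterating gives $\Ex{M_T} \leq \Ex{M_0} = 1$. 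Next, I would apply Markov's inequality to $M_T$ at the threshold $1/\delta$:
\[ \Prob{M_T \geq 1/\delta} \leq \delta \cdot \Ex{M_T} \leq \delta. \]
Finally, since $M_T = \exp\!\br{\sigma^{-2}\textstyle\sum_{t=1}^{T} Z_t}$, taking logarithms on the complementary event turns $\{M_T \leq 1/\delta\}$ into exactly the desired $\{\sum_{t=1}^T Z_t \leq \sigma^2 \log(1/\delta)\}$, which therefore holds with probability at least $1-\delta$.

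I anticipate no real obstacle: the martingale-difference label on $\{Z_t\}$ enters only implicitly, through the supplied MGF-type bound on each one-step conditional expectation. Replacing the random $Y_t$ of Lemma~\ref{eq:liorabona} by a deterministic constant $\sigma$ bypasses the quadratic linearization step (the $e^x \leq 1+x+x^2$ control on the MGF for small $x$) that made the original Li--Orabona argument more delicate, which is precisely why ``dealing with $Z_t$ itself rather than $Z_t^2$'' becomes possible. The only minor bookkeeping is verifying that each $M_t$ is integrable, and this follows immediately from iterating the MGF assumption.
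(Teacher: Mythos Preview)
Your proposal is correct and follows essentially the same approach the paper indicates: the paper does not spell out a proof but simply says it mirrors Lemma~\ref{eq:liorabona} (the Li--Orabona exponential-supermartingale/Chernoff argument) with $Y_t$ replaced by the deterministic $\sigma$ and $\lambda$ fixed to $1/\sigma^2$, which is exactly the construction $M_t=\exp(\sigma^{-2}\sum_{s\le t}Z_s)$ plus Markov that you carry out. Your observation that the MDS label is only used implicitly through the assumed conditional-MGF bound is also accurate.
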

We will also make use of another relevant result (Lemma 5 in \citet{li2020high}) regarding the probabilistic behavior of maximum over norms of noise vectors.
\begin{lemma}[Lemma 5 in \citet{li2020high}] \label{eq:max-noise}
	Under assumptions as in Eq.~\eqref{eq:unbiasedness} and~\eqref{eq:subgaussian}, let $\xi_t = \nabla f(x_t, z_t) - \nabla f(x_t)$. For $\delta \in (0,1)$, with probability at least $1 - \delta$,
	\begin{align*}
		\max_{1 \leq t \leq T} \norm{\xi_t}^2 \leq \sigma^2 \log \br{\frac{e T}{\delta}}
	\end{align*}
\end{lemma}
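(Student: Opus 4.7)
The plan is to combine an exponential Markov (Chernoff) inequality applied to each $\|\xi_t\|^2$ with a union bound over $t = 1, \ldots, T$. The sub-Gaussian assumption \eqref{eq:subgaussian} directly controls the moment generating function of $\|\xi_t\|^2$; interpreted at the natural scale, it gives $\Ex{\exp(\|\xi_t\|^2/\sigma^2) \mid x_t} \leq e$, which is exactly the hypothesis needed for a Chernoff-style tail bound on the squared noise norm.

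First, I would fix a single index $t$ and apply the exponential Markov inequality: for any threshold $v > 0$,
$$\Prob{\|\xi_t\|^2 \geq \sigma^2 v \mid x_t} \;=\; \Prob{\exp(\|\xi_t\|^2/\sigma^2) \geq e^v \mid x_t} \;\leq\; e^{-v}\,\Ex{\exp(\|\xi_t\|^2/\sigma^2) \mid x_t} \;\leq\; e^{1-v}.$$
Since the right-hand side is deterministic, the tower property yields the same unconditional tail bound $\Prob{\|\xi_t\|^2 \geq \sigma^2 v} \leq e^{1-v}$. Second, I would apply a union bound over $t = 1, \ldots, T$ to obtain $\Prob{\max_{1 \leq t \leq T}\|\xi_t\|^2 \geq \sigma^2 v} \leq T e^{1-v}$. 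Finally, solving $T e^{1-v} = \delta$ gives $v = \log(eT/\delta)$, which is exactly the stated bound $\max_t \|\xi_t\|^2 \leq \sigma^2 \log(eT/\delta)$ with probability at least $1 - \delta$.

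The main (minor) subtlety is correctly handling the conditional nature of the sub-Gaussian bound, since the noise $\xi_t$ is only guaranteed to be sub-Gaussian \emph{given} the iterate $x_t$; this is resolved immediately by the tower rule because the Chernoff bound $e^{1-v}$ does not depend on $x_t$. Beyond this, the argument is a textbook Chernoff-plus-union-bound calculation and I do not anticipate any real obstacle; the only modeling choice is pinning down the precise normalization of the sub-Gaussian parameter $\sigma$ so that the final constants line up with the statement.
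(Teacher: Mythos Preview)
Your argument is correct: the exponential Markov bound applied to $\exp(\|\xi_t\|^2/\sigma^2)$ together with the sub-Gaussian hypothesis gives the per-$t$ tail estimate $e^{1-v}$, and a union bound followed by solving $Te^{1-v}=\delta$ yields precisely $v=\log(eT/\delta)$. Your remark about the tower property disposing of the conditioning on $x_t$ is also accurate. Note that the paper does not give its own proof of this lemma---it simply cites it as Lemma~5 of \citet{li2020high}---so there is nothing to compare against here; your Chernoff-plus-union-bound argument is the standard proof and matches the one in the cited reference. One minor point: the paper's displayed assumption \eqref{eq:subgaussian} is written as $\Ex{\exp(\|\xi\|^2)}\le\exp(\sigma^2)$ rather than the normalized form $\Ex{\exp(\|\xi\|^2/\sigma^2)}\le e$ you (correctly) use; the latter is the intended hypothesis (it is what Li--Orabona assume and what is actually used elsewhere in this paper, e.g.\ when invoking Lemma~\ref{eq:liorabona}), and you were right to flag the normalization as the only thing needing care.
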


\begin{reptheorem}{thm:high-prob-objective}
	Let $x_t$ be generated by AdaGrad and define $\Delta_t = f(x_t) - \min_{x \in \mathbb R^d} f(x)$. Under sub-Gaussian noise assumption as in Eq.~\eqref{eq:subgaussian}, with probability at least $1 - 3 \delta$,
	\begin{align*}
		\Delta_{t+1} &\leq \Delta_1 + 3 G_0^{-1} G^2 + 2 G_0^{-1} \sigma^2 \log \br{\frac{e t}{\delta}} + \frac{3}{4 G_0} \sigma^2 \log(1 / \delta) \\
		&\quad+ \frac{L}{2} \br{ 1 + \log \br{ \max \bc{1, G_0^2} + 2 G^2 t + 2 \sigma^2 t \log \br{\frac{e t}{\delta}} } }\\
	\end{align*}
\end{reptheorem}
\begin{proof}
	Using the initial steps of the proof in the original derivation,
	\begin{align*}
		\Delta_{T+1} &\leq \Delta_1 + \underbrace{ \sum_{t=1}^{T} - \eta_t \norm{\bg_t}^2 }_{\textrm{(A)}} + \underbrace{ \sum_{t=1}^{T} - \eta_t \ip{\bg_t}{\xi_t} }_{\textrm{(B)}} + \underbrace{ \frac{L}{2} \sum_{t=1}^{T} \eta_t^2 \norm{g_t}^2 }_{\textrm{(C)}}
	\end{align*}
	
	\paragraph{Term (A) + (B):} In order to deal with measurability issues, we will divide term (B) into two parts:
	\begin{align*}
		\sum_{t=1}^{T} - \eta_t \ip{\bg_t}{\xi_t} &= \sum_{t=1}^{T} - \eta_{t-1} \ip{\bg_t}{\xi_t} + \sum_{t=1}^{T} ( \eta_{t-1} - \eta_t ) \ip{\bg_t}{\xi_t}\\
		&\overset{(1)}\leq \sum_{t=1}^{T} - \eta_{t-1} \ip{\bg_t}{\xi_t} + 2(G^2 + \max_{1 \leq t \leq T} \norm{\xi_t}^2) \sum_{t=1}^{T} ( \eta_{t-1} - \eta_t )\\
		&\leq \sum_{t=1}^{T} - \eta_{t-1} \ip{\bg_t}{\xi_t} + 2(G^2 + \max_{1 \leq t \leq T} \norm{\xi_t}^2) \eta_0\\
	\end{align*}
	where we used Cauchy-Schwarz together with Young's inequality to obtain inequality (1) and telescoped in the last line. Moreover, we pick $\eta_0 \geq \eta_1$ to make sure monotonicity. Without loss of generality, a natural choice would be $\eta_0 = G_0^{-1}$, which aligns with the definition in Algorithm~\ref{alg:adagrad}. By Lemma~\ref{eq:max-noise}, with probability at least $1 - \delta$,
	\begin{align*}
		\sum_{t=1}^{T} - \eta_t \ip{\bg_t}{\xi_t} &\leq \sum_{t=1}^{T} - \eta_{t-1} \ip{\bg_t}{\xi_t} + 2 G_0^{-1}\br{ G^2 + \sigma^2 \log \br{\frac{e T}{\delta}} }\\
	\end{align*}
	Now, we will invoke Lemma~\ref{eq:liorabona} on the term $\sum_{t=1}^{T} - \eta_{t-1} \ip{\bg_t}{\xi_t}$ by setting $Z_t = - \eta_{t-1} \ip{\bg_t}{\xi_t}$, $Y_t^2 = \eta_{t-1}^2 \norm{\bg_t}^2 \sigma^2$, with probability at least $1 - \delta$,
	\begin{align*}
		\sum_{t=1}^{T} - \eta_{t-1} \ip{\bg_t}{\xi_t} &\leq \frac{3}{4} \lambda \sum_{t=1}^{T} Y_t^2 + \frac{1}{\lambda} \log(1 / \delta) \\
		&= \frac{3}{4} \lambda \sigma^2 \sum_{t=1}^{T} \eta_{t-1}^2\norm{\bg_t}^2 + \frac{1}{\lambda} \log(1 / \delta)
	\end{align*}
	Now, summing up the expression above with term (A) and leaving $\frac{1}{\lambda} \log(1 / \delta)$ aside for now,
	\begin{align*}
		 \frac{3}{4} \lambda \sigma^2 \sum_{t=1}^{T} \eta_{t-1}^2\norm{\bg_t}^2 - \sum_{t=1}^{T} \eta_t \norm{\bg_t}^2 &\leq \frac{3}{4} \lambda \sigma^2 \sum_{t=1}^{T} \eta_{t-1}^2\norm{\bg_t}^2 - G_0\sum_{t=1}^{T} \eta_t^2 \norm{\bg_t}^2\\
		 &\leq \frac{3}{4} \lambda \sigma^2 \sum_{t=1}^{T} \eta_{t-1}^2\norm{\bg_t}^2 - G_0\sum_{t=1}^{T} \eta_{t-1}^2 \norm{\bg_t}^2 + G_0\sum_{t=1}^{T} \br{ \eta_{t-1}^2 - \eta_t^2} \norm{\bg_t}^2\\
		 &\leq \br{ \frac{3}{4} \lambda \sigma^2 - G_0 } \sum_{t=1}^{T} \eta_{t-1}^2\norm{\bg_t}^2 + G_0 G^2 \eta_0^2
	\end{align*}
	where we used $G_0 \eta_t^2 \leq \eta_t$ in the first inequality and added/subtracted $\sum_{t=1}^{T} \eta_{t-1}^2 \norm{\bg_t}^2$ in the second inequality. Since we have a free variable to choose, $\lambda$, we could set it to $\lambda = \frac{4 G_0}{3 \sigma^2}$ to obtain,
	\begin{align*}
		\frac{3}{4} \lambda \sigma^2 \sum_{t=1}^{T} \eta_{t-1}^2\norm{\bg_t}^2 - \sum_{t=1}^{T} \eta_t \norm{\bg_t}^2 &\leq G_0^{-1} G^2
	\end{align*}
	Hence, summing up all the expressions together, with probability at least $1 - 2 \delta$,
	\begin{align*}
		(A) + (B) \leq 3 G_0^{-1} G^2 + 2 G_0^{-1}\sigma^2 \log \br{\frac{e T}{\delta}} + \frac{3}{4 G_0} \sigma^2 \log(1 / \delta)
	\end{align*}
	
	\paragraph{Term (C):}
	This term is easy to prove using online learning lemmas as we did previously, but we introduce a slight change in order to avoid bounded stochastic gradient assumption.
	\begin{align*}
		\frac{L}{2} \sum_{t=1}^{T} \eta_t^2 \norm{g_t}^2 &\leq \frac{L}{2} \br{ 1 + \log \br{ \max \bc{1, G_0^2} + \sum_{t=1}^{T} \norm{g_t}^2 } } \\
		&\leq \frac{L}{2} \br{ 1 + \log \br{ \max \bc{1, G_0^2} + 2 \sum_{t=1}^{T} \norm{\bg_t}^2 + 2 \sum_{t=1}^{T} \norm{\xi_t}^2 } } \\ 
				&\leq \frac{L}{2} \br{ 1 + \log \br{ \max \bc{1, G_0^2} + 2 G^2 T + 2 \br{ \max_{1 \leq t \leq T} \norm{\xi_t}^2 } T } }
	\end{align*}
	We invoked Lemma~\ref{lem:technical-log} to obtain the first inequality. Once again via Lemma~\ref{eq:max-noise}, with probability at least $1 - \delta$,
	\begin{align*}
		\frac{L}{2} \sum_{t=1}^{T} \eta_t^2 \norm{g_t}^2&\leq \frac{L}{2} \br{ 1 + \log \br{ \max \bc{1, G_0^2} + 2 G^2 T + 2 \sigma^2 T \log \br{\frac{e T}{\delta}} } } \\
	\end{align*}
	Finally, merging all the expression, with probability at least $1 - 3 \delta$,
	\begin{align*}
		\Delta_{T+1} &\leq \Delta_1 + 3 G_0^{-1} G^2 + 2 G_0^{-1} \sigma^2 \log \br{\frac{e T}{\delta}} + \frac{3}{4 G_0} \sigma^2 \log(1 / \delta) \\
		&\quad+ \frac{L}{2} \br{ 1 + \log \br{ \max \bc{1, G_0^2} + 2 G^2 T + 2 \sigma^2 T \log \br{\frac{e T}{\delta}} } }\\
		&= O \br{ \Delta_1 + \sigma^2 \log \br{\frac{e T}{\delta}} + \sigma^2 \log(1 / \delta) + L \log \br{ T + \sigma^2 T \log \br{\frac{e T}{\delta}} } }
	\end{align*}
\end{proof}

\subsection{High Probability Convergence Rate}
Now, we are at a position to prove noise-adaptive bounds.
\begin{reptheorem}{thm:high-prob-convergence}
	Let $x_t$ be generated by AdaGrad and define $\Delta_t = f(x_t) - \min_{x \in \mathbb R^d} f(x)$. Under sub-Gaussian noise assumption as in Eq.~\eqref{eq:subgaussian} and considering high probability boundedness of $\Dmax$ due to Theorem~\ref{thm:high-prob-objective}, with probability at least $1 - 5 \delta$,
	\begin{align*}
		\frac{1}{T} \sum_{t=1}^{T} \norm{\bg_t}^2 &\leq \frac{ 32 \br{ \Dmax + L }^2 + 8\br{ \Dmax  + L } \br{ {G_0} + \sigma \sqrt{ 2 \log(1 / \delta) }  } + 8\sigma^2 \log(1 / \delta) }{T} + \frac{ 8\sqrt{2} \br{ \Dmax + L } \sigma }{\sqrt{T}}
	\end{align*}
\end{reptheorem}
\begin{proof}
	We take off from the same step of the original analysis by defining $\xi_t = \nabla f(x_t,z_t) - \nabla f(x_t)$,
	\begin{align*}
		\sum_{t=1}^{T} \norm{\bg_t}^2 &\leq \frac{\Dmax}{\eta_T} + \sum_{t=1}^{T} - \ip{\bg_t}{\xi_t} + \frac{L}{2} \sum_{t=1}^{T} \eta_t \norm{g_t}^2 \\
		&\leq \br{\Dmax + L} \sqrt{G_0^2 + \sum_{t=1}^{T} \norm{g_t}^2 } + \sum_{t=1}^{T} - \ip{\bg_t}{\xi_t} \\
		&\leq \br{\Dmax + L} \sqrt{G_0^2 + 2 \sum_{t=1}^{T} \br{ \norm{\bg_t}^2 + \norm{\xi_t}^2 + \sigma^2 - \sigma^2 } } + \sum_{t=1}^{T} - \ip{\bg_t}{\xi_t} \\
		&\leq \br{\Dmax + L} \Bigg( {G_0}  + \sqrt{ 2 \sum_{t=1}^{T} \norm{\bg_t}^2 } +  \Bigg( \max \Big\{ 0, 2\underbrace{\sum_{t=1}^{T} ( \norm{\xi_t}^2 - \sigma^2) }_{(*)} \Big\} \Bigg)^{1/2} + \sigma \sqrt{2T} \Bigg) + \underbrace{ \sum_{t=1}^{T} - \ip{\bg_t}{\xi_t} }_{(**)} \\
	\end{align*}
	We already showed that $-\ip{\bg_t}{\xi_t}$ is a MDS. Similarly, we could show that martingale property holds for $\norm{\xi_t}^2 - \sigma^2$,
	\begin{align*}
		\Ex{ \norm{\xi_t}^2 - \sigma^2 \mid \xi_1, \cdots, \xi_{t-1} } = \Ex{ \norm{\xi_t}^2 \mid \xi_1, \cdots, \xi_{t-1} } - \sigma^2 \leq \sigma^2 - \sigma^2 = 0.
	\end{align*}
	Lemma~\ref{eq:liorabona-modified} immediately implies for term $(*)$ that with probability at least $1 - \delta$,
	\begin{align*}
		\sum_{t=1}^{T} ( \norm{\xi_t}^2 - \sigma^2) \leq \sigma^2 \log(1 / \delta)
	\end{align*}
	For term $(**)$, we apply Lemma~\ref{eq:liorabona} with $Y_t^2 = \sigma^2 \norm{\bg_t}^2$ and $\lambda = 1/ \sigma^2$ to obtain with probability at least $1 - \delta$,
	\begin{align*}
		\sum_{t=1}^{T} - \ip{\bg_t}{\xi_t} \leq \frac{3}{4} \sum_{t=1}^{T} \norm{\bg_t}^2 + \sigma^2 \log(1 / \delta)
	\end{align*}
	Plugging these values in and re-arranging,
	\begin{align*}
		\frac{1}{4} \sum_{t=1}^{T} \norm{\bg_t}^2 &\leq \sqrt{ 2 } \br{ \Dmax + L } \sqrt{ \sum_{t=1}^{T} \norm{\bg_t}^2 } + \br{ \Dmax  + L } \br{ {G_0}  + \sigma \sqrt{ 2 \log(1 / \delta) } + \sigma \sqrt{2T}  } + \sigma^2 \log(1 / \delta)
	\end{align*}
	We will conclude our proof by treating the above inequality as a quadratic inequality with respect to $x = \sqrt{ \sum_{t=1}^{T} \norm{\bg_t}^2 }$. Defining $c = \br{ \Dmax  + L } \br{ {G_0}  + \sigma \sqrt{ 2 \log(1 / \delta) } + \sigma \sqrt{2T}  } + \sigma^2 \log(1 / \delta)$,
	\begin{align*}
		x^2 - 4 \sqrt{ 2 } \br{ \Dmax + L } x - 4c \leq 0,
	\end{align*}
	where the roots of the inequality are
	\begin{align*}
		x = \frac{4 \sqrt{ 2 } \br{ \Dmax + L } \pm \sqrt{ 32 \br{ \Dmax + L }^2 + 16 \br{ \Dmax  + L } \br{ {G_0}  + \sigma \sqrt{ 2 \log(1 / \delta) } + \sigma \sqrt{2T}  } + 16 \sigma^2 \log(1 / \delta)  } }{2}
	\end{align*}
	Since $x > 0$ by default, we will take into account the positive root above, which yields,
	\begin{align*}
		\sum_{t=1}^{T} \norm{\bg_t}^2 &\leq 4 \br{ \sqrt{ 2 } \br{ \Dmax + L } + \sqrt{ \br{ \Dmax  + L } \br{ {G_0} + 2 \br{ \Dmax + L } + \sigma \sqrt{ 2 \log(1 / \delta) } + \sigma \sqrt{2T}  } + \sigma^2 \log(1 / \delta)  } }^2 \\
		\frac{1}{T} \sum_{t=1}^{T} \norm{\bg_t}^2 &\leq \frac{ 32 \br{ \Dmax + L }^2 + 8\br{ \Dmax  + L } \br{ {G_0} + \sigma \sqrt{ 2 \log(1 / \delta) }  } + 8\sigma^2 \log(1 / \delta) }{T} + \frac{ 8\sqrt{2} \br{ \Dmax + L } \sigma }{\sqrt{T}}
	\end{align*}
\end{proof}

\if 0
\subsection{A simple proof for AdaGrad - Deterministic Setting}
$$
x_{t+1} = x_t - \eta_t g_t~,
$$
where $g_t = \nabla f(x_t)$, and $\eta_t =1/\sqrt{\sum_{i=1}^t \|g_i\|^2}$.
Using smoothness,
\begin{align*}
f(x_{t+1}) - f(x_t) \leq -\eta_t\|g_t\|^2 + \frac{L\eta_t^2}{2}\|g_t\|^2  = ( -\eta_t + \frac{L\eta_t^2}{2})\|g_t\|^2
\end{align*}
Lets define, $\Delta_t := f(x_t) - \min_{\tau\in[T]} f(x_\tau)$ or alternatively, $\Delta_t := f(x_t) - \min_{x} f(x)$ both imply that $\Delta_t$ is finite. For simplicity, we consider that $\Delta_t \in[0,B]$.
Dividing bot sides by $\eta_t$ and summing over $t$ gives,
\begin{align*}
 \sum_{t=1}^T \|g_t\|^2
 & \leq  
   \sum_{t=1}^T\frac{1}{\eta_t}\left(\Delta_{t}-\Delta_{t+1}\right)+\frac{L}{2}\sum_{t=1}^T \eta_t \|g_t\|^2\\
 &=
 \sum_{t=1}^T(\frac{1}{\eta_t}-\frac{1}{\eta_{t-1}})\Delta_{t}+\frac{L}{2}\sum_{t=1}^T \eta_t \|g_t\|^2\\
 & \leq  
 \frac{2B}{\eta_T}  +\frac{L}{2}\sum_{t=1}^T \eta_t \|g_t\|^2\\
 &\leq
 (2B+\frac{L}{2})\sqrt{\sum_{t=1}^T \|g_t\|^2}
\end{align*}
This immediately implies that,
\begin{align*}
	\sum_{t=1}^T \|g_t\|^2 &\leq (2B+\frac{L}{2})^2\\
	\min_{t \in [T]} \norm{g_t}^2 &\leq \frac{(2B + L^2)^2}{T}
\end{align*}

\subsection{High Probability Analysis for AdaGrad}

\begin{lemma}[Lemma 3 of \cite{kakade2008generalization}] \label{lem:freedman}
	Let $X_t$ be a martingale difference sequence such that $\abs{X_t} \leq b$. Let us also define
	\begin{align*}
		\var_{t-1}(X_t) = \var \left( X_t \mid X_1, ..., X_{t-1} \right) = \mathbb E \left[ X_t^2 \mid X_1, ... , X_{t-1} \right],
	\end{align*}
	and also define $V_T = \sum_{t=1}^{T} \var_{t-1} (X_t)$ as the sum of conditional variances. For $\delta < 1/e$ adn $T \geq 3$, it holds that
	\begin{align}
		\Prob{ \sum_{t=1}^{T} X_t > \max \left\{ 2 \sqrt{V_T}, 3 b \sqrt{ \log(1/\delta) } \right\} \sqrt{ \log(1/\delta) } } \leq 4\log(T) \delta
	\end{align}
\end{lemma}
We will make use this lemma for proving the high probability result.
Now, let us consider the standard AdaGrad iterate update,
$$
x_{t+1} = x_t - \eta_t g_t~,
$$
where $g_t = \nabla f(x_t)+\xi_t$, and $\eta_t =1/\sqrt{\sum_{i=1}^t \|g_i\|^2}$. We also denote $\bg_t: = \nabla f(x_t)$.

We begin the analysis in the classical sense by using smoothness,
\begin{align*}
f(x_{t+1}) - f(x_t) 
&\leq
 -\eta_t \bg_t^\top g_t + \frac{L\eta_t^2}{2}\|g_t\|^2  \\
&=
-\eta_t \|\bg_t\|^2 -\eta_t \bg_t^\top\xi_t +  \frac{L\eta_t^2}{2}\|g_t\|^2
\end{align*}
Dividing by $\eta_t$ and re-arranging gives,
 \begin{align*}
 \sum_t\|\bg_t\|^2
 & \leq  
   \sum_t\frac{1}{\eta_t}\left(\Delta_{t}-\Delta_{t+1}\right)+\frac{L}{2}\sum_t \eta_t \|g_t\|^2  -\sum_t \bg_t^\top\xi_t\\
 &=
 \sum_t(\frac{1}{\eta_t}-\frac{1}{\eta_{t-1}})\Delta_{t}+\frac{L}{2}\sum_t \eta_t \|g_t\|^2-\sum_t \bg_t^\top\xi_t\\
 & \leq  
 \frac{\kappa}{\eta_T}  +\frac{L}{2}\sum_t \eta_t \|g_t\|^2-\sum_t \bg_t^\top\xi_t\\
 &\leq
 (\kappa + L)\sqrt{\sum_t \|g_t\|^2}-\sum_t \bg_t^\top\xi_t
\end{align*}
The challenge now is to bound $-\bg_t^\top\xi_t$. First, we show that it is a martingale difference sequence. Under bounded variance assumption, the sequence is integrable. Most importantly, we should show that the ``martingale property'' holds, i.e., $\mathbb E \left[ -\bg_t^\top\xi_t \mid x_t \right] = 0$. Due to the unbiasedness of gradient estimates, one can show that
\begin{align*}
	\mathbb E [ -\bg_t^\top\xi_t \mid x_t ] =  -\bg_t^\top \mathbb E [\xi_t \mid x_t ] = -\bg_t^\top 0 = 0.
\end{align*}

\paragraph{High probability result via Freedman's inequality: } By Lemma~\ref{lem:freedman}, for $\delta < 1/e$ and with probability $1 - 4\log{T}\delta$,
\begin{align*}
	\sum_{t=1}^T - \ip{\bg_t}{\xi_t}
	&\leq 
	\max \left\{ 2 \sqrt{ \sum_{t=1}^T \Ex{ \ip{\bg_t}{\xi_t}^2 \vert \mathcal F_{t-1} } }, 3 (G^2 + G \tilde G) \sqrt{ \log(1/\delta) } \right\} \sqrt{\log(1/\delta)}\\
	&\overset{(1)}\leq
	{\log(1/\delta)} \left( 3 (G^2 + G \tilde G) + 2  \sqrt{\sum_{t=1}^T \Ex{ \norm{\bg_t}^2\norm{\xi_t}^2 \vert \mathcal F_{t-1} } } \right)\\
	&\leq
	{\log(1/\delta)} \left( 3 (G^2 + G \tilde G) + 2 \sqrt{\sigma^2\sum_{t=1}^T \norm{\bg_t}^2 } \right) \tag{Bounded variance}\\
	&\leq
	3{\log(1/\delta)} \left( (G^2 + G \tilde G) + \sigma \sqrt{ \sum_{t=1}^T \norm{\bg_t}^2 } \right),
\end{align*}
where we used the fact that $\log(1/\delta) > 1$ and Cauchy-Schwarz inequality for the inner product to obtain inequality (1).




Now, plugging the above result in the original expression, with probability $1 - 4 \log(T) \delta$ we have,
\begin{align*}
	\sum_{t=1}^T \norm{\bg_t}^2 \leq (\kappa + L)\sqrt{\sum_{t=1}^T \|g_t\|^2} + 3 (G^2 + G \tilde G) {\log(1/\delta)} + 3 \sigma {\log(1/\delta)} \sqrt{\sum_{t=1}^T \norm{\bg_t}^2 }.
\end{align*}
We managed to reduce the problem into solving a quadratic inequality. By setting $X = \sqrt{ \sum_{t=1}^T \norm{\bg_t}^2 }$ we could rewrite above inequality as
\begin{align*}
	X^2 - 3 \sigma {\log(1/\delta)} X - \left( (\kappa + L )\sqrt{\sum_{t=1}^T \|g_t\|^2} + 3 (G^2 + G \tilde G) {\log(1/\delta)} \right) \leq 0,
\end{align*}
where we have a constraint that $X \geq 0$. We use the formula for finding the roots/zeros of a quadratic polynomial and obtain,
\begin{align*}
	X = \frac{ 3 \sigma {\log(1/\delta)} \pm \sqrt{ (3 \sigma {\log(1/\delta)})^2 + 4 (\kappa + {L} )\sqrt{\sum_{t=1}^T \|g_t\|^2} + 12 (G^2 + G \tilde G) {\log(1/\delta)} } }{2}
\end{align*}
Due to the fact $X \geq 0$, we take the positive root into account,
\begin{align*}
	X = \sqrt{ \sum_{t=1}^T \norm{\bg_t}^2 } &\leq \frac{ 3 \sigma {\log(1/\delta)} + \sqrt{ (3 \sigma {\log(1/\delta)})^2 + 4 (\kappa + {L})\sqrt{\sum_{t=1}^T \|g_t\|^2} + 12 (G^2 + G \tilde G) {\log(1/\delta)} } }{2} \\
	&\leq \sqrt{ (3 \sigma {\log(1/\delta)})^2 + 4 (\kappa + {L})\sqrt{\sum_{t=1}^T \|g_t\|^2} + 12 (G^2 + G \tilde G) {\log(1/\delta)} }.
\end{align*}
By taking the square of both sides,
\begin{align*}
	\sum_{t=1}^T \norm{\bg_t}^2 &\leq 12 (G^2 + G \tilde G) {\log(1/\delta)} + 9 \sigma^2 \log(1/\delta)^2 + 4 (\kappa + {L})\sqrt{\sum_{t=1}^T \|g_t\|^2} \\
\end{align*}
Finally, we use that $\norm{g_t} \leq G$ for all $t \in [T]$ and taking the average over all gradient norms,
\begin{align*}
	\frac{1}{T} \sum_{t=1}^T \norm{\bg_t}^2 \leq  \log(1/\delta)^2 \frac{12 (G^2 + G \tilde G) + 9 \sigma^2}{T} + \frac{(\kappa + {L}) \tilde G}{\sqrt{T}}
\end{align*}

\subsection{High Probability Analysis for AGD scheme with AdaGrad step-size}

We will consider a generalized version of AdaGrad for minimization of smooth, and possibly nonconvex functions over unbounded domain.

\begin{algorithm}[H]
\caption{Accelerated Gradient Method with AdaGrad step-size} \label{alg:adagrad}
\begin{algorithmic}[1]
\Input{ Number of iterations $T$ , $\tilde x_1 = x_1 \in \mathbb R^d$, $\alpha_t \in (0,1]$, step-sizes $\bc{\eta_t}_{t \in [T]}$, $\bc{\gamma_t}_{t \in [T]}$}
\For{$t = 1, ..., T$}
	\Indent
		\State $\bar x_{t} = \alpha_t x_t + (1 - \alpha_t) \tilde x_{t}$
		\State $x_{t+1} = x_t - \eta_t \nabla f(\bar x_t)$
		\State $\tilde x_{t+1} = \bar x_t - \gamma_t \nabla f(\bar x_t)$
	\EndIndent
\EndFor
\Return $\bar x_T$
\end{algorithmic}
\end{algorithm}
This scheme recovers 3 algorithms for different choices of step-sizes $\eta_t$ and $\gamma_t$.
\begin{itemize}
	\item When $\eta_t = \gamma_t$, the algorithm reduces to vanilla gradient descent (GD).
	\item When $\gamma_t = 0$, we have GD with averaging (gradients computed at average iterates).
	\item When $\eta_t \neq \gamma_t > 0$, we have the AGD scheme.
\end{itemize}

We will separately investigate two averaging schemes:
\begin{align*}
	~~~~~~~~~~\alpha_t &= \frac{1}{t} \tag{Uniform Averaging} \\
	~~~~~~~~~~\alpha_t &= \frac{2}{k+1} \tag{Weighted Averaging}
\end{align*}

\paragraph{Analysis:}
\begin{align*}
	f(x_{t+1}) - f(x_t) &\leq \ip{\nabla f(x_t)}{x_{t+1} - x_t} + \frac{L}{2} \norm{x_{t+1} - x_t}^2 \\
	&\leq - \eta_t \ip{\nabla f(x_t)}{ \nabla f(\bar x_t) } + \frac{L \eta_t^2}{2} \norm{ \nabla f(\bar x_t) }^2 \\
	&\leq - \eta_t \norm{ \nabla f(\bar x_t) } - \eta_t \ip{\nabla f(x_t) - \nabla f(\bar x_t)}{ \nabla f(\bar x_t) } + \frac{L \eta_t^2}{2} \norm{ \nabla f(\bar x_t) }^2 \\
	&\leq - \eta_t \norm{ \nabla f(\bar x_t) } + L \eta_t \norm{\bar x_t - x_t} \norm{ \nabla f(\bar x_t) } + \frac{L \eta_t^2}{2} \norm{ \nabla f(\bar x_t) }^2\\
	&\leq - \eta_t \norm{ \nabla f(\bar x_t) } + \frac{L}{2} \norm{\bar x_t - x_t} + {L \eta_t^2} \norm{ \nabla f(\bar x_t) }^2
\end{align*}
For brevity, let us define
\begin{align*}
	\Gamma_t = \prod_{k=1}^t (1 - \alpha_t),~~~\forall t \geq 2~~~~~~~~~~\text{ and }~~~~~~~~~~\Gamma_1 = 1.
\end{align*}
We could express ${\bar x_t - x_t}$ recursively as,
\begin{align*}
	\bar x_t - x_t &= (1 - \alpha_t) \bs{ \tilde x_{t} - x_t }\\
	&= (1 - \alpha_t) \bs{ \bar x_{t-1} - x_{t-1} + (\eta_{t-1} - \gamma_{t-1}) \nabla f(\bar x_{t-1}) } \\
	&= (1 - \alpha_t) \bs{ (1 - \alpha_{t-1}) (\tilde x_{t-1} - x_{t-1}) + (\eta_{t-1} - \gamma_{t-1}) \nabla f(\bar x_{t-1}) } \\
	&= (1 - \alpha_t) \sum_{k=1}^{t-1} \br{ \prod_{j=k+1}^{t-1} (1 - \alpha_j)} (\eta_k - \gamma_k) \nabla f(\bar x_k) \\
	&= (1 - \alpha_t) \sum_{k=1}^{t-1} \frac{\Gamma_{t-1}}{\Gamma_k} (\eta_k - \gamma_k) \nabla f(\bar x_k) \\
	&= (1 - \alpha_t) \Gamma_{t-1} \sum_{k=1}^{t-1} \frac{\alpha_k}{\Gamma_k} \frac{(\eta_k - \gamma_k)}{\alpha_k} \nabla f(\bar x_k).
\end{align*}
Observe that
\begin{align*}
	\sum_{k=1}^{t} \frac{\alpha_k}{\Gamma_k} = \frac{1}{\Gamma_t}~~~~~\implies~~~~~\Gamma_t \sum_{k=1}^t \frac{\alpha_k}{\Gamma_k} = 1
\end{align*}
Hence, by convexity of squared norm and (absolute) homogeneity of vector norms,
\begin{equation} \label{eq:iterate-diff-recursive}
\begin{aligned}
	\norm{\bar x_t - x_t}^2 &= \norm{ (1 - \alpha_t) \Gamma_{t-1} \sum_{k=1}^{t-1} \frac{\alpha_k}{\Gamma_k} \frac{(\eta_k - \gamma_k)}{\alpha_k} \nabla f(\bar x_k) }^2\\
	&\leq (1 - \alpha_t)^2 \Gamma_{t-1} \sum_{k=1}^{t-1} \frac{\alpha_k}{\Gamma_k} \frac{(\eta_k - \gamma_k)^2}{\alpha_k^2} \norm{ \nabla f( \bar x_k ) }^2
\end{aligned}
\end{equation}
So far, we follow the analysis by \citet{lan2020first}. Next, we start diverging from the standard approach and introduce adaptive step-sizes. We plug the result above back into the initial expression and rearrange,
\begin{align*}
	\eta_t \norm{ \nabla f(\bar x_t) }^2 &\leq f(x_t) - f(x_{t+1}) + {L \eta_t^2}\norm{ \nabla f(\bar x_t) }^2 + \frac{L}{2} (1 - \alpha_t)^2 \Gamma_{t-1} \sum_{k=1}^{t-1} \frac{\alpha_k}{\Gamma_k} \frac{(\eta_k - \gamma_k)^2}{\alpha_k^2} \norm{ \nabla f( \bar x_k ) }^2.
\end{align*}

\subsubsection{Analysis in the classical sense}
We sum the above inequality over $t$ and regroup,
\begin{align*}
	\sum_{t=1}^T \eta_t \norm{ \nabla f(\bar x_t) }^2
	&\leq
	f(x_1) - f(x_{T+1}) + L \sum_{t=1}^T {\eta_t^2} \norm{ \nabla f(\bar x_t) }^2 + \frac{L}{2} \sum_{t=1}^T (1 - \alpha_t) \Gamma_{t} \sum_{k=1}^{t} \frac{\alpha_k}{\Gamma_k} \frac{(\eta_k - \gamma_k)^2}{\alpha_k^2} \norm{ \nabla f( \bar x_k ) }^2\\
	&\leq
	f(x_1) - f(x_{T+1}) + L \sum_{t=1}^T {\eta_t^2} \norm{ \nabla f(\bar x_t) }^2 + \frac{L}{2} \sum_{t=1}^T \bs{ \sum_{k=t}^{T} \Gamma_{t} } \frac{\alpha_t}{\Gamma_t} \frac{(\eta_t - \gamma_t)^2}{\alpha_t^2} \norm{ \nabla f( \bar x_k ) }^2
\end{align*}
Proceeding forward, we will provide the analysis for both weighted and uniform averaging schemes.
\paragraph{Case 1: $\alpha_t = \frac{2}{t+1}$.}

To simplify the derivation, let us define $A_t = \sum_{k=1}^{t} k$, which enables us to rewrite the weights as $\alpha_t = \frac{t}{A_t}$. As a corollary,
$$
\Gamma_t = \prod_{k=1}^{t} (1 - \alpha_k) = \prod_{k=1}^{t} \frac{A_{k-1}}{A_k} = \frac{1}{A_t}
$$
Plugging these definitions into the convergence bound, we obtain
\begin{align*}
	\sum_{t=1}^T \eta_t \norm{ \nabla f(\bar x_t) }^2
	&\leq
	f(x_1) - f(x_{T+1}) + L \sum_{t=1}^T {\eta_t^2} \norm{ \nabla f(\bar x_t) }^2 + \frac{L}{2} \sum_{t=1}^T \bs{ \sum_{k=t}^{T} \frac{1}{A_k} } t \frac{(\eta_t - \gamma_t)^2}{\alpha_t^2} \norm{ \nabla f( \bar x_t ) }^2\\
	&\leq
	f(x_1) - f(x_{T+1}) + L \sum_{t=1}^T {\eta_t^2} \norm{ \nabla f(\bar x_t) }^2 + \frac{L}{2} \sum_{t=1}^T \bs{ 2 \sum_{k=t}^{T} \frac{1}{k(k+1)} } t \frac{(\eta_t - \gamma_t)^2}{\alpha_t^2} \norm{ \nabla f( \bar x_t ) }^2\\
	&\leq
	f(x_1) - f(x_{T+1}) + L \sum_{t=1}^T {\eta_t^2} \norm{ \nabla f(\bar x_t) }^2 + \frac{L}{2} \sum_{t=1}^T \bs{ 2 \sum_{k=t}^{T} \frac{1}{k} - \frac{1}{k+1} } t \frac{(\eta_t - \gamma_t)^2}{\alpha_t^2} \norm{ \nabla f( \bar x_t ) }^2\\
	&\leq
	f(x_1) - f(x_{T+1}) + L \sum_{t=1}^T {\eta_t^2} \norm{ \nabla f(\bar x_t) }^2 + \frac{L}{2} \sum_{t=1}^T 2t \bs{ \frac{1}{t} - \frac{1}{T+1} } \frac{(\eta_t - \gamma_t)^2}{\alpha_t^2} \norm{ \nabla f( \bar x_t ) }^2\\
	&\leq
	f(x_1) - f(x_{T+1}) + L \sum_{t=1}^T {\eta_t^2} \norm{ \nabla f(\bar x_t) }^2 + L \sum_{t=1}^T \frac{(\eta_t - \gamma_t)^2}{\alpha_t^2} \norm{ \nabla f( \bar x_t ) }^2\\
\end{align*}
Now, setting $\gamma_t = (1 + \alpha_t) \eta_t$, we have
\begin{align*}
	\sum_{t=1}^T \eta_t \norm{ \nabla f(\bar x_t) }^2	
	&\leq
	f(x_1) - f(x_{T+1}) + 2L \sum_{t=1}^T {\eta_t^2} \norm{ \nabla f(\bar x_t) }^2\\
\end{align*}
By Lemma~\ref{lem:technical-lemma-sqrt},
$$
\sqrt{\sum_{t=1}^{T} \norm{\nabla f(\bar x_t)}^2 } \leq \sum_{t=1}^T \eta_t \norm{ \nabla f(\bar x_t) }^2,
$$
and by Lemma~\ref{lem:technical-lemma-log},
$$
\sum_{t=1}^T {\eta_t^2} \norm{ \nabla f(\bar x_t) }^2 \leq \log \br{ 1 + \sum_{t=1}^{T} \norm{ \nabla f(\bar x_t) }^2 }.
$$
Combining all of them together we achieve,
\begin{align*}
	\sqrt{ \sum_{t=1}^{T} \norm{\nabla f(\bar x_t) }^2 } &\leq f(x_1) - f(x*) + \log \br{ 1 + \sum_{t=1}^{T} \norm{ \nabla f(\bar x_t) }^2 }\\
	\min_{t \in [T]} \norm{ \nabla f(\bar x_t) }^2 &\leq \frac{ f(x_1) - f(x*) + \log \br{ 1 + G^2 T } }{\sqrt{T}}
\end{align*}

\subsubsection{Our proposed analysis}

\paragraph{Deterministic setting}

Let us define $\Delta_t := f(x_t) - \min_{t \in [T]} f(x)$ or alternatively, $\Delta_t := f(x_t) - \min_{x \in \mathbb R^d} f(x)$. We divide both sides by $\eta_t$ and sum over $t$,
\begin{align*}
	\sum_{t=1}^{T} \norm{\nabla f(\bar x_t)}
	&\leq
	\sum_{t=1}^T \frac{1}{\eta_t} \left( \Delta_t - \Delta_{t+1} \right) + {L \eta_t}\norm{ \nabla f(\bar x_t) }^2 + \frac{L}{2} (1 - \alpha_t)^2 \frac{\Gamma_{t-1}}{\eta_t} \sum_{k=1}^{t-1} \frac{\alpha_k}{\Gamma_k} \frac{(\eta_k - \gamma_k)^2}{\alpha_k^2} \norm{ \nabla f( \bar x_k ) }^2\\
	&\leq
	\frac{\kappa}{\eta_T} + \frac{L}{2} \sum_{t=1}^{T} (1 - \alpha_t) \frac{\Gamma_{t}}{\eta_t} \sum_{k=1}^{t} \frac{\alpha_k}{\Gamma_k} \frac{(\eta_k - \gamma_k)^2}{\alpha_k^2} \norm{ \nabla f( \bar x_k ) }^2 + 2L \sqrt{ \sum_{t=1}^{T} \norm{ \nabla f(\bar x_t) }^2 }\\
	&\leq
	\frac{(\kappa + 2L)}{\eta_T} + \frac{L}{2} \sum_{t=1}^{T} \underbrace{ \bs{ \sum_{k=t}^{T} (1 - \alpha_k) \frac{\Gamma_{k}}{\eta_k} } }_{\textrm{($\star$)}} \frac{\alpha_t}{\Gamma_t} \frac{(\eta_t - \gamma_t)^2}{\alpha_t^2} \norm{ \nabla f( \bar x_t ) }^2
\end{align*}
Let's take a look at term $(\star)$.
\begin{equation} \label{eq:term-star}
\begin{aligned}
	\sum_{k=t}^{T} (1 - \alpha_k) \frac{\Gamma_{k}}{\eta_k} &\leq \sum_{k=t}^{T} \frac{1}{A_k \eta_k}\\
	&= 2 \sum_{k=t}^{T} \frac{1}{\eta_k} \frac{1}{k(k+1)} \\
	&\leq \frac{2}{\eta_{T}} \sum_{k=t}^{T} \br{ \frac{1}{k} - \frac{1}{k+1} }\\
	&= \frac{2}{\eta_T} \br{ \frac{1}{t} - \frac{1}{T+1} }
\end{aligned}
\end{equation}
Plugging this result back into the expression, setting $\gamma_t = (1 + \alpha_t) \eta_t$ and realizing that $\frac{\alpha_t}{\Gamma_t} = t$,
\begin{align*}
	\sum_{t=1}^{T} \norm{\nabla f(\bar x_t)}
	&\leq
	\frac{(\kappa + 2L)}{\eta_T} + \frac{L}{\eta_T} \sum_{t=1}^{T} t \br{ \frac{1}{t} - \frac{1}{T+1} } \frac{(\eta_t - \gamma_t)^2}{\alpha_t^2} \norm{ \nabla f( \bar x_t ) }^2 \\
	&\leq
	\frac{(\kappa + 2L)}{\eta_T} + \frac{L}{\eta_T} \sum_{t=1}^{T} \eta_t^2 \norm{ \nabla f( \bar x_t ) }^2 \\
	&\overset{(1)}\leq
	\frac{\left( \kappa + 3L + L \log \br{ 1 + \sum_{t=1}^{T} \norm{ \nabla f(\bar x_t) }^2 } \right) }{\eta_T}\\
	&\leq
	{\left( \kappa + 3L + L \log \br{ 1 + \sum_{t=1}^{T} \norm{ \nabla f(\bar x_t) }^2 } \right) }\sqrt{ \sum_{t=1}^{T} \norm{ \nabla f(\bar x_t) }^2 } \\
	\sum_{t=1}^{T} \norm{\nabla f(\bar x_t)} &\leq {\left( \kappa + 3L + L \log \br{ 1 + \sum_{t=1}^{T} \norm{ \nabla f(\bar x_t) }^2 } \right)^2 }\kappa
\end{align*}
Taking the minimum of the gradients norms on the left hand side and dividing both sides by $T$,
\begin{align*}
	\min_{t \in [T]} \norm{\nabla f(\bar x_t)}^2 \leq \frac{ \left( \kappa + 3L + L \log \br{ 1 + G^2 T } \right)^2 }{T}
\end{align*}

\subsubsection{Stochastic setting}
Let us define $g_t = \nabla f(\bar x_t) + \xi_t$. Throughout the manuscript, we denote stochastic gradients with a tilde, i.e., $\tnabla f(x_t)$, and specifically use $\bg_t = \nabla f(\bar x_t)$. The algorithm stays the same up to replacing $\nabla f(\bar x_t) \Leftrightarrow g_t$.
\begin{align*}
	f(x_{t+1}) - f(x_t) &\leq \ip{\nabla f(x_t)}{x_{t+1} - x_t} + \frac{L}{2} \norm{x_{t+1} - x_t}^2 \\
	&\leq  - \eta_t \ip{\nabla f(x_t)}{\nabla f(\bar x_t) + \xi_t} + \frac{L\eta_t^2}{2} \norm{g_t}^2 \\
	&\leq  - \eta_t \norm{\bg_t}^2 - \eta_t \ip{\nabla f(\bar x_t)}{\xi_t} - \eta_t \ip{\nabla f(x_t) - \nabla f(\bar x_t)}{g_t} + \frac{L \eta_t^2}{2} \norm{g_t}^2 \\
	&\leq  - \eta_t \norm{\bg_t}^2 - \eta_t \ip{\nabla f(\bar x_t)}{\xi_t} + \frac{L}{2} \norm{\bar x_t - x_t}^2 + {L \eta_t^2} \norm{g_t}^2
\end{align*}
Dividing by $\eta_t$, rearranging and summing over $t$,
\begin{align*}
	\sum_{t=1}^{T} \norm{\bg_t}^2 &\leq \sum_{t=1}^{T} \frac{1}{\eta_t} \left( \Delta_t - \Delta_{t+1} \right) + \frac{L}{2 \eta_t} \norm{\bar x_t - x_t}^2 + {L \eta_t} \norm{g_t}^2 - \ip{\bg_t}{\xi_t}\\
	&\leq \frac{\kappa}{\eta_T} + \sum_{t=1}^{T} \frac{L}{2} (1 - \alpha_t) \frac{\Gamma_{t}}{\eta_t} \sum_{k=1}^{t} \frac{\alpha_k}{\Gamma_k} \frac{(\eta_k - \gamma_k)^2}{\alpha_k^2} \norm{g_k}^2 + L \sum_{t=1}^{T} {\eta_t} \norm{g_t}^2 +  \sum_{t=1}^{T} - \ip{\bg_t}{\xi_t}\\
	&\leq \frac{\kappa}{\eta_T} + 2L \sqrt{ \sum_{t=1}^{T} \norm{g_t}^2 } + \frac{L}{2} \sum_{t=1}^{T} \bs{ \sum_{k=t}^{T} (1 - \alpha_k) \frac{\Gamma_{k}}{\eta_k} } \frac{\alpha_t}{\Gamma_t} \frac{(\eta_t - \gamma_t)^2}{\alpha_t^2} \norm{g_t}^2 +  \sum_{t=1}^{T} - \ip{\bg_t}{\xi_t}\\
	&\leq \frac{\kappa + 2L}{\eta_T} + \frac{L}{\eta_T} \sum_{t=1}^{T} \br{ 1 - \frac{t}{T+1} } \eta_t^2 \norm{g_t}^2 +  \sum_{t=1}^{T} - \ip{\bg_t}{\xi_t}\\
	&\leq \frac{\kappa + 2L}{\eta_T} + \frac{L}{\eta_T} \sum_{t=1}^{T} \eta_t^2 \norm{g_t}^2 +  \sum_{t=1}^{T} - \ip{\bg_t}{\xi_t}\\
	&\leq \frac{\kappa + 3L + L \log \br{ 1 + \sum_{t=1}^{T} \norm{g_t}^2 }}{\eta_T} + \sum_{t=1}^{T} - \ip{\bg_t}{\xi_t}\\
\end{align*}
By Lemma~\ref{lem:freedman}, for $\delta < 1/e$ and with probability $1 - 4\log{T}\delta$,
\begin{align*}
	\sum_{t=1}^T - \ip{\bg_t}{\xi_t}
	&\leq 
	\max \left\{ 2 \sqrt{ \sum_{t=1}^T \Ex{ \ip{\bg_t}{\xi_t}^2 \vert x_t } }, 3 (G^2 + G \tilde G) \sqrt{ \log(1/\delta) } \right\} \sqrt{\log(1/\delta)}\\
	&\overset{(1)}\leq
	3{\log(1/\delta)} \left( (G^2 + G \tilde G) + \sigma \sqrt{ \sum_{t=1}^T \norm{\bg_t}^2 } \right),
\end{align*}
where we used that fact that $\log(1/\delta) > 1$ and Cauchy-Schwarz inequality for the inner product to obtain inequality (1). Inserting this bound into the main expression we obtain
\begin{align*}
    \sum_{t=1}^{T} \norm{\bg_t}^2 &\leq \frac{\kappa + 3L + L \log \br{ 1 + \sum_{t=1}^{T} \norm{g_t}^2 }}{\eta_T} + 3{\log(1/\delta)} \left( (G^2 + G \tilde G) + \sigma \sqrt{ \sum_{t=1}^T \norm{\bg_t}^2 } \right)\\
    &\leq \underbrace{ \frac{\kappa + 3L + L \log \br{ 1 + \sum_{t=1}^{T} \norm{g_t}^2 }}{\eta_T} + 3 (G^2 + G \tilde G) {\log(1/\delta)} }_{\textrm{$(A)$}} + 3 \sigma {\log(1/\delta)} \sqrt{ \sum_{t=1}^T \norm{\bg_t}^2 }
\end{align*}
We could represent the last line of the expression as a quadratic function with $X = \sqrt{ \sum_{t=1}^{T} \norm{\bg_t}^2 }$ as
\begin{align*}
    X^2 - 3 \sigma \log (1/\delta) X - A \leq 0,
\end{align*}
and the zeros of the quadratic are
\begin{align*}
    X = \frac{3 \sigma \log (1/\delta) \pm \sqrt{ (3 \sigma \log (1/\delta))^2 + 4A }}{2}
\end{align*}
We take the positive root as $X = \sqrt{ \sum_{t=1}^{T} \norm{\bg_t}^2 }$ is always non-negative.
\begin{align*}
    X = \sqrt{ \sum_{t=1}^{T} \norm{\bg_t}^2 } &\leq \frac{3 \sigma \log (1/\delta) + \sqrt{ (3 \sigma \log (1/\delta))^2 + 4A }}{2}\\
    \sum_{t=1}^{T} \norm{\bg_t}^2 &\leq 9\sigma^2 \log(1/\delta)^2 + 12 (G^2 + G \tilde G) \log(1/\delta) + 4 \br{ \kappa + 3L + L \log \br{ 1 + \sum_{t=1}^{T} \norm{g_t}^2 } }\sqrt{ \sum_{t=1}^{T} \norm{g_t}^2 }
\end{align*}
Hence,
\begin{align*}
    \frac{1}{T} \sum_{t=1}^{T} \norm{\bg_t}^2 \leq \log(1/\delta)^2 \frac{12 (G^2 + G \tilde G) + 9 \sigma^2}{T} + \frac{\tilde G \br{ 4\kappa + 8L + 4L \log \br{ 1 + \tilde G^2 T } }}{\sqrt{T}}
\end{align*}



\fi

}
\fi

\end{document}